\DeclareUrlCommand\emailUrl{\urlstyle{rm}}
\newcommand{\email}[1]{\href{mailto:#1}{\protect\emailUrl{#1}}}
\newcommand{\emailVierling}{\email{jannik.vierling@tuwien.ac.at}}
\newcommand{\emailHetzl}{\email{stefan.hetzl@tuwien.ac.at}}
\newtheorem{theorem}{Theorem}[section]
\newtheorem{proposition}[theorem]{Proposition}
\newtheorem{definition}[theorem]{Definition}
\newtheorem{lemma}[theorem]{Lemma}
\newtheorem{corollary}[theorem]{Corollary}
\newtheorem{remark}[theorem]{Remark}
\newtheorem{example}[theorem]{Example}
\newtheorem{question}[theorem]{Question}
\newcommand{\GSI}[3]{{#1}\text{-}\mathrm{GSI}^{#2}(#3)}
\newcommand{\SI}{\mathrm{SI}}
\newcommand{\IND}[1]{{#1}\text{-}\mathrm{IND}}
\newcommand{\INDParameterFree}[1]{{#1}\text{-}\mathrm{IND}^{-}}
\newcommand{\UINDR}{\mathrm{IND}^{R}}
\newcommand{\INDRPF}[1]{{#1}\text{-}\mathrm{GIND}^{R}}
\newcommand{\LiteralAINDR}{\Literal\text{-}\mathrm{AIND}^{R}}
\newcommand{\DIND}[1]{\IND{#1}_{2}}
\newcommand{\SA}{\mathrm{SA}}
\newcommand{\SASchema}[1]{{#1}\text{-}\SA}
\newcommand{\FV}{\mathrm{FV}}
\newcommand{\Open}{\mathrm{Open}}
\newcommand{\Literal}{\mathrm{Literal}}
\newcommand{\cnf}{\mathit{CNF}}
\newcommand{\sk}{\mathit{sk}}
\newcommand{\skA}{\sk^{\forall}}
\newcommand{\skE}{\sk^{\exists}}
\newcommand{\numeral}[1]{\overline{#1}}
\newcommand{\Aitp}{Automated inductive theorem proving (AITP)}
\newcommand{\QuantifierParenthesis}[3]{({#1}{#2}){#3}}
\newcommand{\Quantifier}[3]{\QuantifierParenthesis{#1}{#2}{#3}}
\newcommand{\Forall}[2]{\Quantifier{\forall}{#1}{#2}}
\newcommand{\ForallBounded}[4]{\Forall{#1{#2}{#3}}{#4}}
\newcommand{\Exists}[2]{\Quantifier{\exists}{#1}{#2}}
\newcommand{\ExistsExOne}[2]{\Quantifier{\exists!}{#1}{#2}}
\newcommand{\DocumentTitle}{
  Induction and Skolemization in saturation theorem proving
}
\author[1,2]{Stefan Hetzl}
\author[1,3]{Jannik Vierling}
\affil[1]{Vienna University of Technology\protect\\Institute of Discrete Mathematics and Geometry}
\affil[2]{\emailHetzl (corresponding author)}
\affil[3]{\emailVierling}
\date{}
\title{\DocumentTitle}
\begin{document}

\maketitle

\begin{abstract}
  We consider a typical integration of induction in saturation-based theorem provers and investigate the effects of Skolem symbols occurring in the induction formulas.
  In a practically relevant setting we establish a Skolem-free characterization of refutation in saturation-based proof systems with induction.
  Finally, we use this characterization to obtain unprovability results for a concrete saturation-based induction prover.
\end{abstract}

\noindent
{\bf Keywords:} inductive theorem proving, weak arithmetical theories, Skolemization, saturation theorem proving

\section{Introduction}
\label{sec:introduction}


\Aitp\ is a branch of automated deduction that aims at automating the process of finding proofs that involve mathematical induction.
In first-order \ac{ATP} we try to establish validity whereas in \ac{AITP} one is usually interested to prove that a formula is true in the standard model of some inductive type, such as natural numbers, lists, trees, etc.
By Gödel's incompleteness theorems, truth in the standard model is in general not semi-decidable (even worse, it is in general not even arithmetically definable).
Hence, for \ac{AITP} there is a lot more freedom in the choice of proof systems, than there is for \ac{ATP}.
In practice we see methods that make use of typical first-order induction schemata, Hilbert-style induction rules (for example \cite{kersani2013,kersani2014}), and even more exotic cyclic calculi (see \cite{brotherston2005,brotherston2012}) that can exceed the power of the first-order induction schema \cite{berardi2017a,berardi2019}.

The most prominent applications of automated inductive theorem proving are found in formal methods for software engineering.
For example, the formal verification of software relies strongly on one or another form of induction since any non-trivial program contains some form of loops or recursion.
Besides the applications in software engineering, \ac{AITP} methods have applications in the formalization of mathematics.
For instance, \ac{AITP} methods can be employed by proof assistants to explore a theory in order to provide useful lemmas  \cite{johansson2014}, \cite{johansson2009}.

A wide variety of methods for automated inductive theorem proving have been developed: there are methods based on recursion analysis \cite{moore1979,stevens1988,bundy1989}, proof by consistency \cite{comon2001}, rippling \cite{bundy1993}, cyclic proofs \cite{brotherston2012}, extensions of saturation-based provers \cite{biundo1986,kersani2013,kersani2014,cruanes2015,cruanes2017,echenim2020,reger2019,hajdu2020,wand2017}, tree grammar provers \cite{eberhard2015}, theory exploration based provers \cite{claessen2013a}, rewriting induction \cite{reddy1990}, encoding \cite{schoisswohl2020}, extensions of SMT solvers \cite{reynolds2015}.
Many methods integrate the induction mechanism more or less tightly within a proof system that is well-suited for automation.
Therefore, these methods exist mainly at lower levels of abstraction, often close to an actual implementation.
Such methods are traditionally evaluated empirically on a set of benchmark problems such as the one described by Claessen et. al. \cite{claessen2015}.
Formal explanations backing the observations obtained by the empirical evaluation are still rare.
As of now, it is difficult to classify methods according to their strength and to give theoretical explanations of an empirically observed failure of a given method in a particular context.


The work in this article is part of a research program that aims at analyzing methods for \ac{AITP} by applying techniques and results from mathematical logic.
The purpose of this is twofold.
Firstly, formal analyses allow us to complement and to explain the empirical knowledge obtained by the practical evaluations of \ac{AITP} methods.
Secondly, the analyses carried out during this program will inevitably lead to a development of the logical foundations of automated inductive theorem proving.
In particular, we believe that practically relevant negative results are especially valuable in revealing the features a method is lacking.
Thus, negative results may drive the development of new methods.
Moreover, we believe that this research program will strengthen the link between the research in automated inductive theorem proving and mathematical logic, and therefore, may lead to cross-fertilization by providing interesting theoretical techniques from mathematical logic and new problems for mathematical logic.

As part of this research program Hetzl and Wong \cite{wong2017} have given some observations on the logical foundations of inductive theorem proving.
Vierling \cite{vierling2018} has analyzed the n-clause calculus \cite{kersani2013,kersani2014} resulting in an estimate of the strength of this calculus.
Building on this analysis Hetzl and Vierling \cite{hetzl2020} have further abstracted the n-clause calculus and situated this calculus with respect to some logical theories.
The authors are currently also working on an unprovability result for the n-clause calculus.

The subject of \ac{AITP} has recently increasingly focused on integrating mathematical induction in saturation-based theorem provers \cite{kersani2013,kersani2014,cruanes2015,cruanes2017,wand2017,echenim2020,reger2019,hajdu2020}.
In this article we propose abstractions of these systems and investigate how Skolemization interferes with induction in such a system.
In a fairly general yet practically relevant setting we are able to show that Skolem symbols take the role of induction parameters.
We use this insight to provide unprovability results for a family of methods using induction for quantifier-free formulas.
This allows us in particular to obtain unprovability results for the concrete method described in \cite{reger2019,hajdu2020}.


In this article we will provide a unified view of a commonly used strategy to integrate induction into saturation-based theorem proving and concentrate on the role of Skolemization in these systems.
To our knowledge the interaction between induction and Skolemization has not been investigated in the related literature.
Section~\ref{sec:preliminaries} introduces all the necessary notations related to our logical formalism, our presentation of Skolemization, and the arithmetic theories used in this article.
We will give a precise presentation of Skolemization, that imposes a concrete naming schema which will be particularly useful in dealing with the languages generated by saturation systems.
In Section~\ref{sec:induction_in_saturation_based_systems} we give an abstract description of saturation-based proof systems and describe abstractly a common strategy to integrate induction in such systems.
We furthermore present a restriction of this system that generalizes a way to handle induction found in most practical saturation systems with induction.
Section~\ref{sec:unrestricted_induction_and_skolemization} gives a very clear characterization of refutation in saturation systems with an unrestricted induction rule (see Theorem~\ref{thm:4}) and analyzes the effects of Skolemization on the induction.
In Section~\ref{sec:restricted_induction_and_skolemization} we analyze the effect of Skolemization in syntactically restricted systems that are closer to the practical methods.
This section culminates in a Skolem-free characterization of these systems (see Theorem~\ref{cor:2}).
Finally in Section~\ref{sec:unprovability} we make use of the results from Section~\ref{sec:restricted_induction_and_skolemization} to provide practically relevant unprovability results for a family of methods using quantifier-free induction formulas (see Theorem~\ref{thm:3}) and apply this result to the concrete method
 presented in~\cite{reger2019,hajdu2020}.

\section{Preliminary Definitions}
\label{sec:preliminaries}
In this section we settle the details of the logical formalism that we use throughout the article.
For the sake of clarity we try to adhere as much as possible to standard terminology, but we introduce some non-standard notations where it is beneficial for the presentation.
In Section~\ref{sec:preliminaries:formulas} we describe our logical formalism and the related notations such as clauses.
Section~\ref{sec:preliminaries:skolemization} introduces some definitions and well-known results related to Skolemization and in particular the naming schema for Skolem symbols that we adopt in this article.
Finally, in Section~\ref{sec:preliminaries:arithmetic} we recall some notions of formal arithmetic and introduce a particular theory of formal arithmetic that will be of use at various occasions.
\subsection{Formulas, theories, and clauses}
\label{sec:preliminaries:formulas}
We work in a setting of classical single-sorted first-order logic with equality.
That is, besides the usual logical symbols we have a logical binary predicate symbol \(=\) denoting equality.
In the context of automated theorem proving it is common to work in a many-sorted setting, but in order to keep the presentation simple we only use one sort.
All our definitions and results easily generalize to the many-sorted case.
A first-order language \(L\) is a countable set of function symbols and predicate symbols with their respective arities.
Let \(\sigma\) be a (function or predicate) symbol, then we write \(\sigma/n\) to denote that \(\sigma\) has arity \(n \in \mathbb{N}\).
Terms are constructed from function symbols and variables.
Formulas are constructed as usual from atomic formulas, the connectives \(\neg\), \(\vee\), \(\wedge\), \(\rightarrow\), and the quantifiers \(\exists\) and \(\forall\).
In order to save some parentheses we assume the following order of precedence for the propositional connectives: \(\neg\), \(\vee\), \(\wedge\), \(\rightarrow\).
By \(\mathcal{F}(L)\) we denote the set of \(L\) formulas.
The notions of bound variables and free variables are defined as usual.
By \(\FV(\varphi)\) we denote the set of free variables of a formula \(\varphi\).
A formula that has no free variables is called a sentence.
By \(\ExistsExOne{y}{\varphi(\vec{x}, y)}\) we abbreviate the formula
\[
  \Exists{y}{\varphi(\vec{x}, y)} \wedge \Forall{y_{1},y_{2}}{(\varphi(\vec{x}, y_{1}) \wedge \varphi(\vec{x}, y_{2}) \rightarrow y_{1} = y_{2})}.
\]

In this article we are more interested in the axioms of a theory, rather than the deductive closure of these axioms.
Hence, we define a theory as a set axioms and manipulate the deductive closure by means of the first-order provability relation (see Definition~\ref{def:25}).
\begin{definition}[Theories]
  \label{def:33}
  Let \(L\) be a first-order language, then a first-order \(L\) theory \(T\) is a set of \(L\) sentences called the axioms of \(T\).
\end{definition}
For the sake of legibility we often present the axioms of a theory as a list of formulas with free variables, with the intended meaning that these formulas are universally closed.
By \(L(T)\) we denote the language of the theory \(T\).
When no confusion arises we sometimes write \(T\) in places where \(L(T)\) is expected.
\begin{definition}[Provability]
  \label{def:25}
  Let \(\varphi\) be a sentence and \(T\) a theory, then we write \(T \vdash \varphi\) to denote that \(\varphi\) is provable in first-order logic from the axioms of \(T\).
  Let \(\Gamma\) be a set of sentences, then we write \(T \vdash \Gamma\) to denote that \(T \vdash \varphi\) for all sentences \(\varphi \in \Gamma\).
  Let \(T_{1}\) and \(T_{2}\) be theories, then we write \(T_{1} \equiv T_{2}\) if \(T_{1} \vdash T_{2}\) and \(T_{2} \vdash T_{1}\).
\end{definition}
Let \(\varphi(\vec{x})\) be a formula and \(T\) a theory, then in order to ease the notation we will sometimes write \(T \vdash \varphi(\vec{x})\) in place of \(T \vdash \Forall{\vec{x}}{\varphi(\vec{x})}\).
\begin{definition}[Conservativity]
  \label{def:26}
  Let \(T_{1}\) and \(T_{2}\) be theories, and \(\Gamma\) a set of formulas.
  We say that \(T_{1}\) is \(\Gamma\)-conservative over \(T_{2}\) (in symbols \(T_{1} \sqsubseteq_{\Gamma} T_{2}\)), if, for all \(\varphi \in \Gamma\), \(T_{1} \vdash \varphi\) implies \(T_{2} \vdash \varphi\).
  We write \(T_{1} \equiv_{\Gamma} T_{2}\) if \(T_{1} \sqsubseteq_{\Gamma} T_{2}\) and \(T_{1} \sqsupseteq_{\Gamma} T_{2}\).
  If \(\Gamma = \mathcal{F}(L)\) for some first-order language \(L\), then we may simply write \(T_{1} \sqsubseteq_{L} T_{2}\) for \(T_{1} \sqsubseteq_{\mathcal{F}(L)} T_{2}\).
\end{definition}
Automated theorem provers---in particular saturation systems---usually do not operate directly on formulas but instead operate on clauses and clause sets (see Section~\ref{sec:induction_in_saturation_based_systems}).
\begin{definition}[Literals and clauses]
  \label{def:29}
  Let \(L\) be a first-order language. An \(L\) literal is an \(L\) atom or the negation thereof.
  An \(L\) clause is a finite set of \(L\) literals.
  An \(L\) clause set is a set of clauses.
  By \(\Box\) we denote the empty clause.
  Let \(C\) and \(D\) be clauses, then we write \(C \vee D\) for the union of the clauses \(C\) and \(D\).
  Let \(\mathcal{C}\) be a clause set and \(D\) a clause, the we write \(\mathcal{C} \vee D\) to denote the clause set \(\{ C \vee D \mid C \in \mathcal{C} \}\).
  Furthermore, we write \(L(\mathcal{C})\) to denote the language of \(\mathcal{C}\), that is, the set of non-logical symbols that occur in clauses of \(\mathcal{C}\).
\end{definition}
Whenever the language \(L\) is clear from the context or irrelevant, we simply speak of clauses and clause sets instead of \(L\) clauses and \(L\) clause sets.

We will now recall basic some model-theoretic concepts and notations.
Let \(L\) be a language, then an \(L\) structure is a pair \(M = (D,I)\), where \(D\) is a non-empty set and \(I\) is an interpretation.
The interpretation \(I\) is a function that assigns to each symbol \(\sigma/k \in L\) an interpretation \(\sigma^{I}\) such that if \(\sigma\) is a predicate symbol, then \(\sigma^{I} \subseteq D^{k}\) and if \(\sigma\) is a function symbol, then \(\sigma^{I} : D^{k} \to D\).
Let \(\varphi(x_{1}, \dots, x_{n})\) be an \(L\) formula and \(d_{1}, \dots, d_{n} \in D\), then we write \(M, \{ x_{i} \mapsto d_{i} \mid  i = 1, \dots, n \} \models \varphi\) if \(\varphi\) is true in \(M\) under the variable assignment that assigns \(d_{i}\) to \(x_{i}\) for \(i = 1, \dots, n\).
\begin{definition}[Notation]
  \label{def:28}
  Let \(L\) be a language, \(M = (D,I)\) an \(L\) structure, then we define \(|M| = D\).
  Moreover, we sometimes write \(d \in M\) if \(d \in D\) and for a symbol \(\sigma \in L\), we also denote its interpretation \(\sigma^{I}\) in \(M\) by \(\sigma^{M}\).
  Let \(\varphi(x_{1}, \dots, x_{n})\) be an \(L\) formula and \(d_{1}, \dots, d_{n} \in D^{|\vec{x}|}\), then we write \(M \models \varphi(d_{1}, \dots, d_{n})\) if \(M, \{ x_{i} \mapsto d_{i} \mid i = 1, \dots, n\} \models \varphi\).
  Furthermore, we write \(M \models \varphi\), if \(M, \{ x_{i} \mapsto d_{i} \mid i = 1, \dots, n \} \models \varphi\), for all \(d_{1}\), \dots, \(d_{n} \in M\).
  Similarly, we write \(M \models C\) for an \(L\) clause \(C\) with free variables \(x_{1}, \dots, x_{n}\), if \(M, \{ x_{i} \mapsto d_{i} \mid i = 1, \dots, n \} \models C\) for all \(d_{1}, \dots, d_{n} \in M\).
  Let \(\Delta\) be a set of formulas and clauses, then we write \(M \models \Delta\) if \(M \models \delta\) for each \(\delta \in \Delta\).
  We write \(\Lambda \models \Delta\) if for every model \(M\) of \(\Lambda\) we have \(M \models \Delta\).
\end{definition}
\begin{definition}
  \label{def:31}
  Let \(L\) be a language and \(M\) a first-order structure, then we define
  \[
    \mathrm{Th}(M) \coloneqq \{ \varphi \mid M \models \varphi, \text{\(\varphi\) is an \(L\) sentence}\}.
  \]
\end{definition}
We are often interested in the formulas that have a certain structure.
\begin{definition}
  \label{def:27}
  We say that a formula is \(\exists_{0}\) (or \(\forall_{0}\) or open) if it is quantifier-free.
  We say that a formula is \(\exists_{n + 1}\) (\(\forall_{n + 1}\)) if it is of the form \(\Exists{\vec{x}}{\varphi(\vec{x}, \vec{y})}\) (\(\Forall{\vec{x}}{\varphi(\vec{x},\vec{y})}\)), where \(\varphi\) is \(\forall_{n}\) (\(\exists_{n}\)) and \(\vec{x}\) is a possibly empty vector of variables.
  Let \(L\) be a first-order language, then by \(\Literal(L)\), \(\Open(L)\), \(\exists_{n}(L)\), and \(\forall_{n}(L)\) we denote the set of literals, open formulas, \(\exists_{n}\) formulas, and \(\forall_{n}\) formulas of the language \(L\).
  We say that a theory is \(\forall_{n}\) (\(\exists_{n})\) if all of its axioms are \(\forall_{n}\) (\(\exists_{n}\)).
\end{definition}
As mentioned above, automated theorem provers often work on sets of clauses, rather than formulas.
Hence, it is necessary to discuss how formulas are associated with clause sets.
In the following definition we fix one such translation that we use throughout the article.
\begin{definition}
  \label{def:conjunctive_normal_form_translation}  
  By \(\cnf\) we denote a fixed function that assigns to any \(\forall_{1}\) sentence \(\varphi\), a clause set \(C_{\varphi}\) such that \(L(\varphi) = L(C_{\varphi})\) and \(\varphi\) and \(C_{\varphi}\) are logically equivalent.
  Let \(T\) be a \(\forall_{1}\) theory, then \(\cnf(T) \coloneqq \bigcup_{\varphi \in T}\cnf(\varphi)\).
\end{definition}
The function \(\cnf\) fixed by the definition above could for example be the translation to conjunctive normal form that proceeds by moving negations inwards and by distributing disjunction over conjunction.
We did not fix this particular translation because it is irrelevant for us how a conjunctive normal form is obtained as long as the translation preserves the language and is logically equivalent to the original sentence.
Since this article focuses on the interaction of induction and Skolemization, we choose to exclude conjunctive normal form translations that do not preserve the language.
The question how these more advanced transformations interact with induction is clearly also important and should be investigated separately.
\subsection{Skolemization}
\label{sec:preliminaries:skolemization}
We essentially use inner Skolemization with canonical names.
On the one hand this form of Skolemization is convenient from a theoretical point of view, because it can be described as a function on formulas.
In particular, the canonical naming schema for Skolem symbols allows us to be precise about the languages generated during the saturation processes considered in this article.
On the other hand, inner Skolemization performs comparatively well with respect to proof complexity \cite{baaz1994}, and furthermore using canonical Skolem symbols does not increase proof complexity.
Hence, this form of Skolemization is also a reasonable choice from the perspective of automated deduction.

We start by defining an operator describing all the Skolem symbols that can be obtained by Skolemizing a single quantifier over a given language \(L\).
This operator is then iterated on the language \(L\) in order to produce all the Skolem symbols that are required to Skolemize \(L\) formulas.
\begin{definition}
  \label{def:3}
  Let \(L\) be a first-order language, then we define
  \begin{equation*}
    \mathfrak{S}_{Q}(L) \coloneqq \{ \mathfrak{s}_{\Quantifier{Q}{x}{\varphi}}/n \mid \text{\(\varphi\) is an \(L\) formula, \(|\FV(\Quantifier{Q}{x}{\varphi})| = n\)} \},
  \end{equation*}
  where \(Q \in \{ \forall, \exists \}\).
  We set \(\mathfrak{S}(L) \coloneqq \mathfrak{S}_{\forall}(L) \cup \mathfrak{S}_{\exists}(L)\).
  Now we define \(\sk(L) \coloneqq L \cup \mathfrak{S}(L)\).
  By \(\sk^{i}(L)\) we denote the \(i\)-fold iteration of the \(\sk\) operation.
  Finally, we define \(\sk^{\omega}(L) \coloneqq \bigcup_{i < \omega}\sk^{i}(L)\).
  We call the stage of a symbol the least \(i \in \mathbb{N}\) such that the symbol belongs to the language \(\sk^{i}(L)\).
  A first-order language \(L\) is Skolem-free if it does not contain any of its Skolem symbols, that is, if \(L \cap \mathfrak{S}(\sk^{\omega}(L)) = \varnothing\).
\end{definition}
Now we can define the universal and existential Skolem form of a formula.
\begin{definition}
  \label{def:4}
  We define the functions \(\skA, \skE: \mathcal{F}(\sk^{\omega}(L)) \to \mathcal{F}(\sk^{\omega}(L))\) mutually inductively as follows
  \begin{align}
    \sk^{Q}(P(\vec{t})) & \coloneqq P(\vec{t}), \nonumber \\
    \sk^{Q}(A \wedge B) & \coloneqq \sk^{Q}(A) \wedge \sk^{Q}(B), \nonumber \\
    \sk^{Q}(A \vee B) & \coloneqq \sk^{Q}(A) \vee \sk^{Q}(B), \nonumber \\
    \sk^{Q}(\neg A) & \coloneqq \neg \sk^{\overline{Q}}(A), \nonumber \\
    \label{eq:37} \sk^{Q}(\Quantifier{Q}{x}{A(x, \vec{y})}) & \coloneqq \sk^{Q}(A(\mathfrak{s}_{\Quantifier{Q}{x}{A(x,\vec{y})}}(\vec{y}),\vec{y})), \tag{*} \\
    \sk^{Q}(\Quantifier{\overline{Q}}{x}{A}) & \coloneqq \Quantifier{\overline{Q}}{x}{\sk^{Q}(A)} \nonumber,
  \end{align}
  for \(Q \in \{\forall, \exists\}\), \(\overline{\forall} = \exists\), \(\overline{\exists} = \forall\), and where in \(\eqref{eq:37}\) \(\vec{y}\) are \emph{exactly} the free variables of \(\Quantifier{Q}{x}{A}\).
  Let \(\Gamma\) be a set of formulas, then we define \(\sk^{Q}(\Gamma) \coloneqq \{ \sk^{Q}(\varphi) \mid \varphi \in \Gamma \}\).
\end{definition}
Before we discuss some details of the \(\skE\) function in more detail, we will look at an example that illustrates how the function \(\skE\) operates.
\begin{example}
   Let \(P/3\) be a predicate symbol, then the existential Skolem form of the sentence \(\Exists{x}{\Forall{y}{{\Exists{z}{P(x, y, z)}}}}\) is given by
  \begin{align*}
    \label{eq:38}
    \skE(\Exists{x}{\Forall{y}{\Exists{z}{P(x, y, z)}}}) & = \skE(\Forall{y}{\Exists{z}{P(c, y, z)}}) \\ & = \Forall{y}{(\skE(\Exists{z}{P(c, y, z)}))}) \\
    & = \Forall{y}{(\skE(P(c, y, f(y))))} = \Forall{y}{P(c, y, f(y))}.
  \end{align*}
  where \(c = \mathfrak{s}_{\Exists{x}{\Forall{y}{\Exists{z}{P(x, y, z)}}}}\) and \(f = \mathfrak{s}_{\Exists{z}{P(c, y, z)}} = \mathfrak{s}_{\Exists{z}{P(\mathfrak{s}_{\Exists{x}{\Forall{y}{\Exists{z}{P(x, y, z)}}}}, y, z)}}\).  
\end{example}
Observe that the symbols that are introduced by \(\skE\) depend on the names of the variables.
Thus, in particular, the symbols introduced for two formulas that only differ in the names of bound variables may not be the same.
For example, let \(P\) be a unary predicate symbol, then \[
  \skE(\Exists{x}{P(x)}) = P(\mathfrak{s}_{\Exists{x}{P(x)}}) \neq P(\mathfrak{s}_{\Exists{y}{P(y)}}) = \skE(\Exists{y}{P(y)}).
\]
Clearly, we could build the equivalence of formulas up to renaming into the Skolemization function.
However, we prefer not to draw logical reasoning into the definition of the Skolemization function.
Identification of provably equivalent formulas can be added by means of additional axioms, such as the Skolem axioms given in Definition~\ref{def:16}.

The following property of Skolemization is well-known.
\begin{proposition}
  \label{pro:3}
  Let \(L\) be first-order language and \(\varphi\) an \(\sk^{\omega}(L)\) formula.
  Then \(\vdash \skE(\varphi) \rightarrow \varphi\) and \(\vdash \varphi \rightarrow \skA(\varphi)\).
\end{proposition}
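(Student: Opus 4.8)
The plan is to establish both implications at once by a single structural induction proving, for every \(\sk^{\omega}(L)\) formula \(\psi\), the two statements \(\vdash \skE(\psi) \rightarrow \psi\) and \(\vdash \psi \rightarrow \skA(\psi)\) simultaneously. Proving them together is essential, because the clauses of Definition~\ref{def:4} defining \(\skE\) and \(\skA\) are mutually recursive through the negation case, so each statement will feed the other in its inductive step. Since the defining equation~\eqref{eq:37} replaces \(Qx\,A(x,\vec{y})\) by the substitution instance \(A(\mathfrak{s}_{QxA(x)}(\vec{y}))\), I would not induct on the syntactic subformula relation but on a measure insensitive to substituting terms for free variables, for instance the number of connective and quantifier occurrences of \(\psi\); passing from \(Qx\,A\) to \(A(\mathfrak{s}(\vec{y}))\) strictly decreases this measure, because one quantifier is removed while substituting the Skolem term for \(x\) changes neither the connectives nor the remaining quantifiers.

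For the base case, atoms are fixed by both functions, so \(\vdash P(\vec{t}) \rightarrow P(\vec{t})\) settles it. For the conjunction and disjunction cases I would use that \(\skE\) and \(\skA\) commute with \(\wedge\) and \(\vee\), together with the monotonicity of these connectives under provable implication, applied to the two induction hypotheses. The negation case \(\skE(\neg A) = \neg\skA(A)\) and \(\skA(\neg A) = \neg\skE(A)\) is exactly where the two halves of the claim interlock: to obtain \(\vdash \skE(\neg A) \rightarrow \neg A\) I would contrapose the induction hypothesis \(\vdash A \rightarrow \skA(A)\), and dually for \(\skA(\neg A)\) I would contrapose \(\vdash \skE(A) \rightarrow A\).

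The two quantifier cases are the heart of the argument. In the Skolemized case~\eqref{eq:37}, for \(\skE\) I would chain the induction hypothesis \(\vdash \skE(A(\mathfrak{s}(\vec{y}))) \rightarrow A(\mathfrak{s}(\vec{y}))\) with the logical validity \(\vdash A(\mathfrak{s}(\vec{y})) \rightarrow \exists x\, A(x,\vec{y})\) obtained by existential introduction on the Skolem term; dually, for \(\skA\) I would precede the induction hypothesis with the instance \(\vdash \forall x\, A(x,\vec{y}) \rightarrow A(\mathfrak{s}(\vec{y}))\) coming from universal instantiation. In the remaining case the leading quantifier is the dual one and is simply carried through, \(\sk^{Q}(\overline{Q} x\,A) = \overline{Q} x\,\sk^{Q}(A)\), so the conclusion follows from the induction hypothesis by the standard monotonicity of \(\forall\) and \(\exists\) under a provable implication that holds uniformly in the bound variable.

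I expect the only real obstacle to be bookkeeping rather than conceptual: one must choose the induction measure so that the hypothesis genuinely applies to the substitution instance \(A(\mathfrak{s}(\vec{y}))\) arising in~\eqref{eq:37}, and one must track the alternation between \(\skE\) and \(\skA\) carefully through the negation case so that the correct half of the induction hypothesis is invoked. Everything else reduces to elementary monotonicity and the quantifier rules of first-order logic with equality.
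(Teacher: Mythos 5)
Your proof is correct, and it is the standard argument for this fact: the paper itself offers no proof (it states Proposition~\ref{pro:3} as well-known), and the simultaneous induction you describe---with the two statements interlocking through the negation case, the measure chosen to be invariant under substitution so that the inductive hypothesis applies to \(A(\mathfrak{s}_{QxA(x)}(\vec{y}))\) in case~\eqref{eq:37}, and existential introduction respectively universal instantiation on the Skolem term handling the quantifier being eliminated---is exactly how this is established. Nothing is missing; all cases of Definition~\ref{def:4} are covered and the measure argument is sound.
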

In general we do not have the converse of the above implications.
We will now introduce Skolem axioms.
These axioms essentially correspond to the existential Skolem form of the logical axioms \(\varphi \rightarrow \varphi\).
\begin{definition}
  \label{def:16}
  Let \(L\) be a first-order language, and \(\varphi(x, \vec{y})\) an \(\sk^{\omega}(L)\) formula, then we define
  \begin{gather*}
    \mathrm{SA}_{x}^{\exists}\varphi \coloneqq \Exists{x}{\varphi \rightarrow \varphi(\mathfrak{s}_{\Exists{x}{\varphi}}(\vec{y}),\vec{y})},
      \\
    \mathrm{SA}_{x}^{\forall}\varphi \coloneqq \varphi(\mathfrak{s}_{\Forall{x}{\varphi}}(\vec{y}), \vec{y}) \rightarrow \Forall{x}{\varphi}.
  \end{gather*}
  We define \(L\text{-}\mathrm{SA} \coloneqq \{ \Forall{\vec{y}}{\SA_{x}^{Q}\varphi} \mid Q \in \{ \forall, \exists \}, \mathfrak{s}_{\Quantifier{Q}{x}{\varphi(x,\vec{y})}} \in \sk^{\omega}(L)\}\).
\end{definition}
The Skolem axioms allow us to also obtain the converse of Proposition~\ref{pro:3}.
\begin{proposition}
  \label{pro:16}
  Let \(L\) be a first-order language, \(\varphi\) an \(\sk^{\omega}(L)\) formula, and \(Q \in \{ \forall, \exists \}\).
  Then we have \(L\text{-}\mathrm{SA} \vdash \varphi \leftrightarrow \sk^{Q}(\varphi)\).
\end{proposition}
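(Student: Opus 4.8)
The plan is to establish \(L\text{-}\mathrm{SA} \vdash \varphi \leftrightarrow \sk^{Q}(\varphi)\) by induction on the structure of \(\varphi\), proving the statement for both \(Q = \forall\) and \(Q = \exists\) simultaneously. The simultaneous formulation is essential: the clause \(\sk^{Q}(\neg A) = \neg \sk^{\overline{Q}}(A)\) of Definition~\ref{def:4} forces the negation case to appeal to the induction hypothesis for the dual quantifier \(\overline{Q}\). As induction measure I would take the number of connectives and quantifier occurrences of \(\varphi\); substituting a Skolem term for a variable leaves this measure unchanged, so the hypothesis remains available for the substitution instances produced in the quantifier clause.

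For the base case \(\varphi = P(\vec{t})\) the equality \(\sk^{Q}(P(\vec{t})) = P(\vec{t})\) makes the equivalence trivial. The propositional cases (\(\wedge\), \(\vee\), \(\neg\)) and the clause for the non-matching quantifier \(\sk^{Q}(\overline{Q}xA) = \overline{Q}x\,\sk^{Q}(A)\) are all routine: the induction hypothesis yields \(L\text{-}\mathrm{SA} \vdash A \leftrightarrow \sk^{Q'}(A)\), with \(Q' \in \{Q, \overline{Q}\}\) as dictated by the clause, and a provable equivalence propagates through a connective or under a quantifier, since the axioms of \(L\text{-}\mathrm{SA}\) are universally closed.

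The only case carrying real content is the matching quantifier clause \(\sk^{Q}(QxA(x,\vec{y})) = \sk^{Q}(A(\mathfrak{s}_{QxA}(\vec{y}),\vec{y}))\), where \(\vec{y} = \FV(QxA)\). Writing \(s \coloneqq \mathfrak{s}_{QxA}(\vec{y})\) for the Skolem term, the induction hypothesis gives \(L\text{-}\mathrm{SA} \vdash A(s,\vec{y}) \leftrightarrow \sk^{Q}(A(s,\vec{y}))\), so it suffices to derive \(L\text{-}\mathrm{SA} \vdash QxA(x,\vec{y}) \leftrightarrow A(s,\vec{y})\). For \(Q = \exists\) the implication \(A(s,\vec{y}) \rightarrow \exists x\, A(x,\vec{y})\) is logically valid by existential generalization, while the converse \(\exists x\, A(x,\vec{y}) \rightarrow A(s,\vec{y})\) is precisely the Skolem axiom \(\SA_{x}^{\exists}A\) of Definition~\ref{def:16}; the case \(Q = \forall\) is dual, using universal instantiation for the valid direction and \(\SA_{x}^{\forall}A\) for the remaining one. (The valid directions assembled over the whole induction reprove Proposition~\ref{pro:3}.)

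The point requiring care is the membership of the relevant Skolem axiom in \(L\text{-}\mathrm{SA}\): by Definition~\ref{def:16} the axiom \(\SA_{x}^{Q}A\) belongs to \(L\text{-}\mathrm{SA}\) exactly when \(\mathfrak{s}_{QxA} \in \sk^{\omega}(L)\). Since \(A\) is a subformula of the \(\sk^{\omega}(L)\) formula \(\varphi\), it is itself an \(\sk^{\omega}(L)\) formula, so \(\mathfrak{s}_{QxA} \in \mathfrak{S}(\sk^{\omega}(L))\); and because \(\sk^{\omega}(L) = \bigcup_{i<\omega}\sk^{i}(L)\) is closed under the \(\sk\) operation, we have \(\mathfrak{S}(\sk^{\omega}(L)) \subseteq \sk^{\omega}(L)\), so the required axiom is indeed available. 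I expect this closure check, together with keeping the dependence of \(s\) on the free variables \(\vec{y}\) (and hence the arity of \(\mathfrak{s}_{QxA}\)) correctly aligned, to be the only genuinely delicate bookkeeping in the argument.
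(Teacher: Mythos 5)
Your proof is correct: the simultaneous induction over both values of \(Q\) (forced by the negation clause), the substitution-invariant measure that makes the induction hypothesis applicable to \(A(\mathfrak{s}_{QxA}(\vec{y}),\vec{y})\) in the matching-quantifier case, and the split there into a logically valid direction plus the Skolem axiom \(\SA_{x}^{Q}A\) (available in \(L\text{-}\mathrm{SA}\) by the closure of \(\sk^{\omega}(L)\) under Skolemization) are exactly the required details. The paper's own proof consists of the single word ``Straightforward,'' so your argument is precisely the routine induction the authors left implicit.
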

\begin{proof}
  Straightforward.
\end{proof}
Skolem axioms over a Skolem-free theory have the following well-known conservation property.
\begin{proposition}
  \label{pro:9}
  Let \(L\) be a Skolem-free first-order language and \(T\) be an \(L\) theory, then \(L\text{-}\mathrm{SA} + T \equiv_{L} T\).
\end{proposition}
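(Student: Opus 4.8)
The plan is to prove the two conservativity directions separately. The direction \(L\text{-}\mathrm{SA} + T \sqsupseteq_{L} T\) is immediate by monotonicity: since \(T \subseteq L\text{-}\mathrm{SA} + T\), every consequence of \(T\) is a consequence of \(L\text{-}\mathrm{SA} + T\), so in particular the \(\mathcal{F}(L)\)-consequences of \(T\) are provable from \(L\text{-}\mathrm{SA} + T\). The content of the proposition therefore lies in the direction \(L\text{-}\mathrm{SA} + T \sqsubseteq_{L} T\), that is, in showing that any \(\varphi \in \mathcal{F}(L)\) with \(L\text{-}\mathrm{SA} + T \vdash \varphi\) already satisfies \(T \vdash \varphi\). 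I would establish this semantically: by the completeness theorem it suffices to show \(M \models \varphi\) (under every assignment) for every \(L\)-structure \(M\) with \(M \models T\).

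Fix such an \(M\). The key step is to expand \(M\) to a \(\sk^{\omega}(L)\)-structure \(M'\) that models all of \(L\text{-}\mathrm{SA}\) while leaving the interpretation of every \(L\)-symbol untouched. I would build the expansion by recursion on the stage of the Skolem symbols from Definition~\ref{def:3}: assuming that every symbol of stage \(\le i\) has already been interpreted, so that every \(\sk^{i}(L)\)-formula has a determined truth value in the structure built so far, I interpret each Skolem symbol of stage \(i+1\). For a symbol \(\mathfrak{s}_{\exists x \psi}\) with \(\psi\) a \(\sk^{i}(L)\)-formula and parameter tuple \(\vec{a}\), I use the axiom of choice to let \(\mathfrak{s}_{\exists x \psi}^{M'}(\vec{a})\) be a witness of \(\exists x\,\psi(x,\vec{a})\) whenever one exists, and an arbitrary domain element otherwise; dually, for \(\mathfrak{s}_{\forall x \psi}\) I choose a counterexample to \(\forall x\,\psi(x,\vec{a})\) whenever one exists. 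By construction the corresponding instances \(\SA^{\exists}_{x}\psi\) and \(\SA^{\forall}_{x}\psi\) from Definition~\ref{def:16} hold in \(M'\), so \(M' \models L\text{-}\mathrm{SA}\).

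This recursion is well-founded precisely because of the canonical naming schema: the defining formula \(\psi\) of a stage-\((i+1)\) Skolem symbol is a \(\sk^{i}(L)\)-formula and hence never mentions the symbol being defined, so the truth value used to select its interpretation depends only on symbols of strictly smaller stage. Here the hypothesis that \(L\) is Skolem-free is essential: since \(L \cap \mathfrak{S}(\sk^{\omega}(L)) = \varnothing\), none of the freshly interpreted symbols is an \(L\)-symbol, so \(M'\) is a genuine expansion of \(M\) and the two structures agree on all \(L\)-formulas. Because \(T\) consists of \(L\)-sentences we then have \(M' \models T\), hence \(M' \models L\text{-}\mathrm{SA} + T\), and therefore \(M' \models \varphi\). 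As \(\varphi \in \mathcal{F}(L)\) and \(M'\) expands \(M\), agreement on the \(L\)-vocabulary yields \(M \models \varphi\) under every assignment, which is what was required.

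I expect the main obstacle to be organizational rather than conceptual: one must set up the stage-indexed recursion carefully so that the interpretation of each Skolem symbol refers only to symbols already interpreted, and one must verify that the chosen witnesses and counterexamples validate \emph{all} instances in \(L\text{-}\mathrm{SA}\) simultaneously. The uniform selection of witnesses across all parameter tuples \(\vec{a}\) and all defining formulas, i.e.\ the appeal to the axiom of choice, is the one genuinely non-elementary ingredient of the construction.
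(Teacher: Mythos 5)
Your proof is correct, and it is precisely the ``traditional model theoretic proof'' that the paper itself invokes (but does not spell out, since it states Proposition~\ref{pro:9} as well known): the stage-indexed expansion of a model of \(T\) by Skolem functions chosen via the axiom of choice is exactly the construction the paper alludes to in the proof of Proposition~\ref{pro:11}, where models \(\mathbb{N}_i\) for \(\sk^i(L(T))\) are built ``just as in the traditional model theoretic proof of Proposition~\ref{pro:9}.'' Your observations that the recursion is well-founded by the stage of the canonical Skolem names and that Skolem-freeness of \(L\) is what guarantees the expansion leaves the \(L\)-reduct untouched are the right points to emphasize.
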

With the property above we now immediately obtain the well-known fact that Skolemizing a theory results in a conservative extension of that theory.
\begin{lemma}
  \label{lem:17}
  Let \(L\) be a Skolem-free language and \(T\) be an \(L\) theory, then
  \[
    \skE(T) \equiv_{L} T.
  \]
\end{lemma}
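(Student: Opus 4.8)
The plan is to unwind the two inclusions hidden in $\skE(T) \equiv_L T$ and prove them separately: namely $T \sqsubseteq_L \skE(T)$ (that $\skE(T)$ proves every $L$-consequence of $T$) and $\skE(T) \sqsubseteq_L T$ (that $T$ proves every $L$-consequence of $\skE(T)$). Since Propositions \ref{pro:3}, \ref{pro:16}, and \ref{pro:9} are already at our disposal, I expect the proof to be a short assembly of these results rather than anything requiring a fresh idea.

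For the first inclusion I would argue purely logically, without even using Skolem-freeness of $L$. By Proposition \ref{pro:3} we have $\vdash \skE(\varphi) \to \varphi$ for every $\varphi$, so $\skE(T) \vdash \varphi$ for each axiom $\varphi \in T$, i.e.\ $\skE(T) \vdash T$. Consequently anything $T$ derives is already derivable from $\skE(T)$; in particular $\skE(T)$ proves every $L$-consequence of $T$, giving $T \sqsubseteq_L \skE(T)$.

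For the converse inclusion I would route through the Skolem axioms. Fix an $L$-formula $\psi$ with $\skE(T) \vdash \psi$. By Proposition \ref{pro:16} we have $\SASchema{L} \vdash \varphi \leftrightarrow \skE(\varphi)$ for every $\varphi$, hence $\SASchema{L} + T \vdash \skE(\varphi)$ for each $\varphi \in T$ and therefore $\SASchema{L} + T \vdash \skE(T)$. Chaining this with $\skE(T) \vdash \psi$ yields $\SASchema{L} + T \vdash \psi$. Now invoking the conservation property of Proposition \ref{pro:9}, which applies precisely because $L$ is Skolem-free, we obtain $\SASchema{L} + T \equiv_L T$, and since $\psi$ is an $L$-formula this gives $T \vdash \psi$. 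Hence $\skE(T) \sqsubseteq_L T$, and combining the two inclusions completes the proof.

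The main (and essentially only) obstacle is conceptual rather than technical: all the genuine content---that adjoining Skolem axioms to a Skolem-free theory creates no new $L$-theorems---is already packaged in Proposition \ref{pro:9}. The point one must handle with care is that Proposition \ref{pro:9} only licenses the descent for formulas $\psi$ lying in the base language $L$, whereas the intermediate theory $\SASchema{L} + T$ and the theory $\skE(T)$ both live over the larger language $\sk^{\omega}(L)$; thus throughout the argument one must keep track of the fact that we are only ever comparing $L$-consequences.
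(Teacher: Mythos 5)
Your proof is correct and takes essentially the same route as the paper's: Proposition~\ref{pro:3} for the direction in which \(\skE(T)\) recovers all consequences of \(T\), and the chain \(T \equiv_{L} \SASchema{L} + T \equiv \SASchema{L} + \skE(T)\) via Propositions~\ref{pro:9} and~\ref{pro:16} for the conservativity direction. Your bookkeeping of which side of \(\sqsubseteq_{L}\) is which actually adheres to Definition~\ref{def:26} more carefully than the paper's own phrasing, which attributes the direction \(\skE(T) \sqsubseteq_{L} T\) to Proposition~\ref{pro:3} even though that proposition yields \(T \sqsubseteq_{L} \skE(T)\) under the stated convention.
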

\begin{proof}
  The direction \(\skE(T) \sqsubseteq_{L} T\) is an immediate consequence of Proposition \ref{pro:3}.
  For the other direction we have \(T \equiv_{L}^\text{Prop.~\ref{pro:9}} \SASchema{L} + T \equiv^\text{Prop.~\ref{pro:16}} \SASchema{L} + \skE(T)\).
  Hence \(T \equiv_{L} \skE(T)\).
\end{proof}
This also immediately gives us the following weaker statement that is perhaps more familiar in automated deduction.
\begin{corollary}
  \label{cor:6}
  Let \(L\) be a Skolem-free language and \(T\) be theory, then \(T\) is consistent if and only if \(\skE(T)\) is consistent. 
\end{corollary}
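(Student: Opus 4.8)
The plan is to derive Corollary~\ref{cor:6} directly from Lemma~\ref{lem:17} by observing that consistency is exactly a statement about the derivability of a single $L$-sentence, namely the contradiction, and that $L$-conservativity preserves precisely such statements. First I would fix a Skolem-free language $L$ and an $L$-theory $T$, and recall that by Lemma~\ref{lem:17} we have $\skE(T) \equiv_{L} T$; in particular $T \sqsubseteq_{L} \skE(T)$ and $\skE(T) \sqsubseteq_{L} T$ both hold, so the two theories prove exactly the same $L$-sentences.

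For the forward direction I would argue contrapositively: suppose $\skE(T)$ is inconsistent. A convenient way to encode inconsistency as an $L$-sentence is to pick any closed $L$-atom, for instance an equation $\numeral{t} = \numeral{t}$ built from a ground $L$-term (such a term exists after harmlessly adjoining a constant if $L$ has no ground terms, since consistency is insensitive to such an extension), and note that a theory is inconsistent exactly when it proves the $L$-sentence $\neg(t = t)$. Since $\skE(T)$ is inconsistent it proves this sentence, and because $\skE(T) \sqsubseteq_{L} T$ and $\neg(t = t)$ is an $L$-formula, $T$ also proves it, so $T$ is inconsistent. For the converse direction the same argument runs with the roles of $T$ and $\skE(T)$ interchanged, using $T \sqsubseteq_{L} \skE(T)$; alternatively one gets $\skE(T) \vdash T$ directly from Proposition~\ref{pro:3}, so any inconsistency of $T$ is immediately inherited by its extension $\skE(T)$.

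The main obstacle here is essentially presentational rather than mathematical: I must make sure that inconsistency is witnessed by an $L$-sentence so that the $L$-conservativity of Lemma~\ref{lem:17} actually applies. The subtlety is that $\skE(T)$ is a theory in the larger language $\sk^{\omega}(L)$, so a priori a refutation of $\skE(T)$ might seem to rely on Skolem symbols; the point of phrasing the witness as a pure $L$-sentence like $\neg(t = t)$ is that derivability of any fixed $L$-sentence transfers back to $T$ by conservativity. Once this encoding is in place the argument is a one-line application of Lemma~\ref{lem:17}, and I would keep the write-up correspondingly short, simply remarking that since inconsistency amounts to deriving a false $L$-sentence and $\skE(T)$ and $T$ have the same $L$-consequences, the two theories are equiconsistent.
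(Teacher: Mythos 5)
Your proposal is correct and takes essentially the same approach as the paper: the paper states Corollary~\ref{cor:6} as an immediate consequence of Lemma~\ref{lem:17}, which is exactly the argument you spell out (inconsistency is witnessed by an \(L\)-sentence, so \(L\)-conservativity transfers it in both directions). One minor simplification: since equality is a logical symbol here, the sentence \(\neg \forall x\, (x = x)\) witnesses inconsistency in any language, so your detour through ground terms and adjoining a fresh constant is unnecessary.
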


\subsection{Induction and arithmetic}
\label{sec:preliminaries:arithmetic}
We conclude the preliminary definitions with the definition of some notions related to formal arithmetic.
Let us start by discussing the setting for induction that we use in this article.
In automated inductive theorem proving it is customary to work with various inductively defined objects such as the natural numbers, lists, trees, and mutually recursive constructions.
Typically inductive theorem proving concentrates on a multi-sorted setting where a subset of the sorts is interpreted as the term algebra constructed over a set of function symbols, called the constructors.
Such a construction, while of great practical relevance, incurs significant notational complexity. 
Therefore, in order to avoid overloading the presentation, we restrict our setting to the natural numbers.
However, we expect that our results straightforwardly carry over to the more general case mentioned above, because the structure of the induction axiom remains essentially the same.
\begin{definition}
  \label{def:34}
  By \(0/0\) and \(s/1\) we denote the function symbols representing the natural number \(0\)  and the successor function, respectively.
  Moreover, we let \(L_{0} \coloneqq \{ 0/0, s/1 \}\).
\end{definition}
We can now define induction axioms and the first-order structural induction schema.
\begin{definition}
  \label{def:9}
  Let \(L\) be a language, and \(\varphi(x, \vec{z})\) be an \(L\) formula, then the \(L \cup L_{0}\) formula \(\tilde{I}_{x}\varphi\) is given by
  \begin{equation*}
    \label{eq:2}
    \left(\varphi(0, \vec{z}) \wedge \Forall{x}{(\varphi(x, \vec{z}) \rightarrow \varphi(s(x), \vec{z}))}\right) \rightarrow \Forall{x}{\varphi(x, \vec{z})}.
  \end{equation*}
  We refer to the variable \(x\) as the induction variable and to the variables \(\vec{z}\) as the induction parameters.
  Moreover we define the induction axiom \(I_{x}\varphi\) by \(I_{x}\varphi \coloneqq \Forall{\vec{z}}{\tilde{I}_{x}\varphi}\).
  Let \(\Gamma\) be a set of \(L\) formulas, then the set of \(L \cup L_{0}\) sentences \(\IND{\Gamma}\) is given by \(\{ I_{x}\gamma \mid \gamma(x,\vec{z}) \in \Gamma \}\).
\end{definition}
By an arithmetical language we understand a first-order language containing the symbols \(0/0\), \(s/1\), and possibly some symbols representing primitive recursive functions.
In the following definition we recall some standard terminology for arithmetic.
\begin{definition}
  \label{def:7}
  Let \(L\) be an arithmetical language.
  By \(\mathbb{N}_{L}\) the structure whose domain is the set of natural numbers and that interprets the non-logical symbols of \(L\) in the natural way.
  An arithmetical theory is a theory over an arithmetical language.
  Let \(T\) be an \(L\) theory.
  We say that the theory \(T\) is sound if \(\mathbb{N}_{L} \models T\).
  Furthermore, we say that \(T\) is \(\exists_{1}\)-complete if \(\mathbb{N}_{L} \models \varphi\) implies \(T \vdash \varphi\) for all \(\exists_{1}\) \(L\) sentences.
\end{definition}
We conclude this section by describing the setting of linear arithmetic that will in particular serve us in Section~\ref{sec:unprovability:literal_induction_a_case_study} for obtaining unprovability results for the methods \cite{reger2019,hajdu2020}.
The language of linear arithmetic contains besides \(0/0\) and \(s/1\) only the function symbols \(p/1\) and \(+/2\) as infix symbol, where \(p\) denotes the predecessor function and \(+\) denotes the addition.
Clearly, the setting of linear arithmetic is closely related to Presburger arithmetic.
However, we are not interested in the theory of the standard interpretation, but rather in its subtheories such the ones that were already studied by Shoenfield \cite{shoenfield1958}.
This setting of linear arithmetic turns out to be quite useful in the analysis of methods for automated inductive theorem proving, because on the one hand it is simple enough to still allow for straightforward model-theoretic constructions, yet it is complex enough to provide interesting independence results.

Let us fix some notational conventions.
Let \(m \in \mathbb{N}\) and \(t\) be a term, then by \(m \cdot t\) we denote the term \(t + (t + \cdots + (t + t)\cdots)\).
Let \(f\) be a unary function symbol, then \(f^{m}(t)\) stands for \(f(\cdots f(t)\cdots)\).
By \(\numeral{m}\) we denote the term \(s^{m}(0)\).
Our base theory for linear arithmetic is defined as follows.
\begin{definition}
  \label{def:20}
  By \(\mathcal{T}\) we denote the theory axiomatized by the universal closure of the following formulas
  \begin{gather}
    0 \neq s(x), \tag{A1} \label{ax:D} \\
    p(0) = 0, \tag{A2} \label{ax:P:0} \\
    p(s(x)) = x, \tag{A3} \label{ax:P:s} \\
    x + 0 = x, \tag{A4} \label{ax:A:0} \\
    x + s(y) = s(x + y), \tag{A5} \label{ax:A:s}
  \end{gather}
\end{definition}
We conclude with two basic observations about the theory \(\mathcal{T}\).
We shall make use of these observations at several occasions and will for the sake of readability not mention them explicitly every time.
\begin{lemma}
  \label{lem:33}
  \(\mathcal{T} \vdash s(x) = s(y) \rightarrow x = y\).
\end{lemma}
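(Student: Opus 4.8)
The plan is to derive injectivity of the successor directly from the predecessor axiom \eqref{ax:P:s}, namely \(p(s(x)) = x\), using only the congruence properties of equality that are built into first-order logic with equality. This axiom is exactly what is needed: it exhibits \(p\) as a left inverse of \(s\), and any left-invertible function is injective. None of the other axioms \eqref{ax:D}, \eqref{ax:P:0}, \eqref{ax:A:0}, \eqref{ax:A:s} play a role.

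Concretely, I would argue as follows. Assume the antecedent \(s(x) = s(y)\). Since \(=\) is a congruence, applying the unary function symbol \(p\) to both sides yields \(p(s(x)) = p(s(y))\). Instantiating \eqref{ax:P:s} once at \(x\) and once at \(y\) gives \(p(s(x)) = x\) and \(p(s(y)) = y\). Chaining these three equalities via the symmetry and transitivity of \(=\) produces \(x = p(s(x)) = p(s(y)) = y\), hence \(x = y\). Discharging the assumption yields the implication \(s(x) = s(y) \rightarrow x = y\), and universally closing over \(x, y\) gives the stated derivability.

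There is no genuine obstacle here: beyond the pure equality calculus the proof uses only the single axiom \eqref{ax:P:s}, and the remaining moves are the standard congruence and transitivity rules. The only point worth flagging is conceptual rather than technical, namely that the injectivity of \(s\)---which in many arithmetical axiomatizations is postulated as a primitive---is in \(\mathcal{T}\) a derived consequence of equipping the theory with an explicit predecessor function governed by \eqref{ax:P:s}.
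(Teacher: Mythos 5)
Your proof is correct and follows exactly the route the paper intends: its entire proof is ``Use \eqref{ax:P:s}'', and your argument---applying \(p\) to both sides of \(s(x) = s(y)\) and chaining with \(p(s(x)) = x\), \(p(s(y)) = y\) via congruence and transitivity---is precisely the spelled-out version of that one-line hint.
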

\begin{proof}
  Use \(\eqref{ax:P:s}\).
\end{proof}
\begin{proposition}
  \label{pro:4}
  \(\mathcal{T}\) is sound and \(\exists_{1}\)-complete.
\end{proposition}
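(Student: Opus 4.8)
The plan is to prove the two assertions separately. Soundness, $\mathbb{N} \models \mathcal{T}$, is a routine verification: interpreting $0, s, p, +$ in the standard way (with $p$ the predecessor function satisfying $p(0) = 0$), each of the axioms \eqref{ax:D}--\eqref{ax:A:s} is immediately seen to hold in $\mathbb{N}$. By Definition~\ref{def:7}, $\exists_1$-completeness amounts to showing that $\mathcal{T}$ proves every $\exists_1$ sentence true in $\mathbb{N}$, since the $\exists_1$-consequences of the complete theory $\mathrm{Th}(\mathbb{N})$ are precisely the $\exists_1$ sentences holding in $\mathbb{N}$. So let $\exists \vec{x}\, \psi(\vec{x})$ with $\psi$ open be true in $\mathbb{N}$; fixing witnesses $\vec{n}$ we have $\mathbb{N} \models \psi(\numeral{\vec{n}})$, where $\numeral{\vec{n}}$ denotes the corresponding tuple of numerals, and it suffices to prove $\mathcal{T} \vdash \psi(\numeral{\vec{n}})$, whence $\mathcal{T} \vdash \exists \vec{x}\, \psi$ by existential generalization. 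Thus the whole statement reduces to showing that $\mathcal{T}$ proves every true closed quantifier-free sentence.

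The core is a term-evaluation lemma: for every closed term $t$ of $L(\mathcal{T})$, if $t$ has value $k$ in $\mathbb{N}$, then $\mathcal{T} \vdash t = \numeral{k}$. I would prove this by induction on the structure of $t$. The base case $t = 0$ is trivial. For $t = s(t')$ the induction hypothesis gives $\mathcal{T} \vdash t' = \numeral{m}$, and $\mathcal{T} \vdash s(t') = \numeral{m+1}$ follows by congruence. For $t = p(t')$ one distinguishes whether the value of $t'$ is $0$ or a successor, using \eqref{ax:P:0} and \eqref{ax:P:s} respectively. The case $t = t_1 + t_2$ requires the auxiliary fact $\mathcal{T} \vdash \numeral{a} + \numeral{b} = \numeral{a+b}$, itself established by a meta-induction on $b$ using \eqref{ax:A:0} for the base case and \eqref{ax:A:s} together with congruence for the step; combining this with the induction hypotheses for $t_1$ and $t_2$ closes the case.

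Next I would show that $\mathcal{T}$ decides every closed atom correctly. A closed atom has the form $t_1 = t_2$; let $n_1, n_2$ be the respective values, so that $\mathcal{T} \vdash t_i = \numeral{n_i}$ by the evaluation lemma. If $n_1 = n_2$ then $\mathcal{T} \vdash t_1 = t_2$ by transitivity of equality. If $n_1 \neq n_2$, then $\mathcal{T} \vdash t_1 \neq t_2$ follows from the distinctness of numerals $\mathcal{T} \vdash \numeral{n_1} \neq \numeral{n_2}$: assuming without loss of generality $n_1 < n_2$, one applies Lemma~\ref{lem:33} $n_1$ times to reduce a putative equality to $0 = \numeral{n_2 - n_1}$ with $n_2 - n_1 \geq 1$, contradicting \eqref{ax:D}. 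Since every closed quantifier-free sentence is a propositional combination of closed atoms, and $\mathcal{T}$ decides each atom in accordance with $\mathbb{N}$, propositional reasoning yields $\mathcal{T} \vdash \psi$ for every true such $\psi$, completing the argument.

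The steps are all elementary, so the main obstacle is essentially bookkeeping. The only genuinely inductive points are the evaluation of sums (the nested induction establishing $\numeral{a} + \numeral{b} = \numeral{a+b}$) and the distinctness of numerals. The latter is the crux, as it is the place where the ability to \emph{refute} false atoms becomes essential—requiring both \eqref{ax:D} and Lemma~\ref{lem:33}—which is unavoidable because the matrix $\psi$ of an $\exists_1$ sentence may contain negated equalities.
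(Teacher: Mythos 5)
Your proof is correct and takes essentially the same approach as the paper: the paper's own proof is just the two-line remark that soundness is obvious and that \(\mathcal{T}\) decides ground formulas, and your term-evaluation lemma, numeral-distinctness argument (via Lemma~\ref{lem:33} and \eqref{ax:D}), and propositional closure are precisely the details behind that observation. No gaps.
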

\begin{proof}
  The soundness part is obvious.
  For the \(\exists_{1}\)-completeness observe that \(\mathcal{T}\) decides ground formulas.
\end{proof}
\section{Saturation-based systems and induction}
\label{sec:induction_in_saturation_based_systems}
Induction can be integrated into a saturation proving system in different ways.
One possibility is to contain the induction mechanism in a separate module that may use a saturation prover to discharge subgoals.
Moreover, the induction module may receive additional information from the saturation prover, for instance information about failed proof attempts \cite{biundo1986}.
Another, currently more popular, way is to integrate the induction mechanism more tightly into the saturation system as some form of inference rule \cite{kersani2013,kersani2014}, \cite{reger2019,hajdu2020}, \cite{cruanes2015,cruanes2017}, \cite{wand2017}, \cite{echenim2020}.
In this section we give an abstract framework for \ac{AITP} methods integrating induction in saturation-proof systems in terms of a general induction rule.
This framework will allow us to investigate in Sections~\ref{sec:unrestricted_induction_and_skolemization} and~\ref{sec:restricted_induction_and_skolemization} the role of Skolem symbols in these systems.
In Section~\ref{sec:unprovability} we show that the methods described in \cite{reger2019,hajdu2020} fit into our framework.
In Section~\ref{sec:saturation_based_proof_systems} we define saturation systems abstractly and introduce some related notions.
After that, Section~\ref{sec:induction_rules} introduces the notion of induction rule as a general way to integrate induction into a saturation system and presents a practically relevant specialization of this induction rule.

\subsection{Saturation-based proof systems}
\label{sec:saturation_based_proof_systems}
Saturation is a technique of automated theorem proving that consists of computing the closure of a set of formulas or clauses under some inference rules.
The saturation process goes on until some termination condition, such as the derivation of the empty clause, is met or until no more ``new'' formulas can be generated.
Typically saturation-based theorem provers operate in a clausal setting because clauses have less structure and are therefore better suited for automated proof search.

In what follows we concentrate on the refutational setting, because most state-of-the art theorem provers are refutation provers. 
That is, in order to determine for some theory \(T\) whether a given sentence \(\varphi\) is provable in \(T\), the prover saturates the clause set \(\cnf(\skE(T + \neg \varphi))\) until the empty clause is derived.
However our definitions can be easily adapted to the positive case by dualizing them, so as to cover for example connection-like methods.

Practical saturation proof systems are usually based on a variant of the superposition calculus.
In order not to get involved in the technical details of these saturation-based proof systems we will abstractly think of a such a prover as a state transition system whose current state is a set of derived clauses and whose state transitions are inference rules that generate new clauses.
In particular, our notion of saturation system does not have any notion of redundancy mechanisms such as simplification rules and deletion rules.
Since this article is mostly about upper bounds on the logical strength of \ac{AITP} methods, the assumption that clauses are never deleted is unproblematic.
\begin{definition}[Saturation systems]
  \label{def:5}
  A saturation system \(\mathcal{S}\) is a set of inference rules of the form
  \begin{equation*}
    \begin{prooftree}
      \hypo{\mathcal{C}}
      \infer1{\mathcal{D}}
    \end{prooftree},
  \end{equation*}
  also written as \(\mathcal{C}/\mathcal{D}\) where \(\mathcal{C}\) is a set of clauses \(\mathcal{D}\) is a finite set of clauses.
  Let \(\mathcal{S}_{1}\) and \(\mathcal{S}_{2}\) be two saturation-based proof systems, then by \(\mathcal{S}_{1} + \mathcal{S}_{2}\) we denote the system obtained by the union of the inference rules of \(\mathcal{S}_{1}\) and \(\mathcal{S}_{2}\).
\end{definition}
Informally, an inference rule \(\mathcal{C} / \mathcal{D}\) indicates that if the system is in the ``state'' \(\mathcal{C}\), then the system changes into the ``state'' \(\mathcal{C} \cup \mathcal{D}\).
The reason why we consider inference rules of this form is that they allow us to keep track of global properties of the prover such as for example the language of the currently derived clauses.
Observe that our notion of inference rules is very general since \(\mathcal{C}\) may be infinite.
Hence we could formulate an \(\omega\)-rule for saturation systems.
However, we will only work with inference rules that operate with the language of \(\mathcal{C}\) and a finite set of clauses \(\mathcal{C}_{0} \subseteq \mathcal{C}\).
\begin{example}
  The resolution rule can be presented as follows:
  \[
    \begin{prooftree}
      \hypo{\{l \vee C\} \cup \{ m \vee D \} \cup \mathcal{C}}
      \infer1[\(\mathrm{Res}\),]{ \{(C \vee D)\mu \}}
    \end{prooftree}
  \]
  where \(\mathcal{C}\) is a clause set, \(C\) and \(D\) are clauses, and \(\mu\) is the most general unifier of the literals \(l\) and \(\overline{m}\).
\end{example}

\begin{definition}[Deduction, Refutation]
  \label{def:6}
  Let \(\mathcal{C}_{0}\) be a set of clauses and \(\mathcal{S}\) a saturation-based proof system.
  A deduction from \(\mathcal{C}_{0}\) in \(\mathcal{S}\) is a finite sequence of clause sets \(\mathcal{D}_{0}, \dots, \mathcal{D}_{n}\) such that \(\mathcal{C}_{0} = \mathcal{D}_{0}\) and \(\mathcal{D}_{i + 1} = \mathcal{D}_{i} \cup \mathcal{B}_{i}\) such that \(\mathcal{D}_{i}/\mathcal{B}_{i}\) is an inference rule of \(\mathcal{S}\) for \(0 \leq i < n\).
  We say that a clause \(C\) is derivable from \(\mathcal{C}_{0}\) in \(\mathcal{S}\) if there exists a deduction \(\mathcal{D}_{0}, \dots, \mathcal{D}_{n}\) such that \(C \in \mathcal{D}_{n}\).
  A deduction \(\mathcal{D}_{0}, \dots, \mathcal{D}_{n}\) is called a refutation if \(\Box \in \mathcal{D}_{n}\).
\end{definition}
Since we are usually interested in extending saturation systems for pure first-order logic by inference rules for induction we need to introduce the notion of soundness and refutational completeness.
\begin{definition}
  \label{def:30}
  Let \(\mathcal{S}\) be a saturation system.
  We say that \(\mathcal{S}\) is sound if whenever a clause \(C\) is derivable from a clause set \(\mathcal{C}_{0}\) in \(\mathcal{S}\), then \(L(C) \subseteq L(\mathcal{C}_{0})\) and \(\mathcal{C}_{0} \models C\).
  The saturation system \(\mathcal{S}\) is said to be refutationally complete if there is a refutation from \(\mathcal{C}_{0}\) if \(\mathcal{C}_{0}\) is inconsistent.
\end{definition}

\subsection{Induction rules}
\label{sec:induction_rules}
Typically induction is integrated in a saturation prover by a mechanism, that, upon some condition,
 selects some clauses out of the generated clauses and constructs an induction formula based on the selected clauses.
After that, the resulting induction axiom is clausified and the clauses are added to the search space \cite{kersani2013,kersani2014,reger2019,hajdu2020,cruanes2015,wand2017}.
The systems differ in the heuristics that are used to construct the induction formula, in the shape of the resulting induction formulas and in the conditions upon which an induction axiom is added to the search space.
For instance, Kersani and Peltier's method \cite{kersani2013,kersani2014} carries out an induction only once, namely when the generated clauses are sufficient to derive the empty clause.
Thus this method does, technically speaking, not even generate clauses.
We abstract the induction mechanisms of the aforementioned methods by the following induction rule.
\begin{definition}
  \label{def:32}
  The induction rule \(\UINDR\) is given by
  \begin{equation*}
    \begin{prooftree}
      \hypo{\mathcal{C}}
      \infer1[\(\UINDR\)]{\cnf(\sk^{\exists}(I_{x}\varphi(x, \vec{z})))},
    \end{prooftree}
  \end{equation*}
  where \(\mathcal{C}\) is a set of clauses, \(\varphi(x, \vec{z})\) is a \(L(\mathcal{C})\) formula.
\end{definition}
Despite being limited to natural numbers, the induction rule presented above is very general in the sense that it does not impose any restrictions on the complexity of the induction formulas.
None of the methods known to us comes even close to making use of the full power offered by that rule.
Nevertheless, it will serve us as a useful tool for theoretical analyses.

There is an important observation that we can make about this induction rule.
First of all, in a saturation system with this induction rule Skolemization may happen at any time and not just once before the saturation process begins, as is the case in saturation systems for pure first-order logic.
Secondly, the induction rule \(\UINDR\) permits Skolem symbols to appear in induction formulas.
In other words, the induction \(\UINDR\) iteratively extends the language of the induction formulas by Skolem symbols.
Interestingly, a similar situation has been considered in the literature on mathematical logic \cite{beklemishev2003}.
In saturation systems for pure first-order logic, the role of Skolemization is clear: It allows us to obtain an equiconsistent formula without existential quantifiers (see Corollary~\ref{cor:6}).
In saturation systems with the induction rule \(\UINDR\) the role of Skolemization is not clear anymore, in the sense of Corollary~\ref{cor:6}.
This raises the question how the extension of the language of induction formulas by Skolem symbols affects the power of the system.
Also note that this feature is not artificial but actually appears in the concrete methods mentioned above.

We shall address this question in Section~\ref{sec:unrestricted_induction_and_skolemization}.
In particular we will provide a logical characterization of refutability in a sound and complete saturation system extended by the induction rule \(\UINDR\) in terms of a theory with an induction schema (see Theorem~\ref{thm:4}).
As a corollary we obtain the soundness of the rule \(\UINDR\) (see Corollary~\ref{cor:3}).

The following example illustrates how to use the above induction rule.
\begin{example}
  \label{ex:1}
  Let us work in the setting of linear arithmetic and let \(\mathcal{S}\) be a sound and refutationally complete saturation system.
  We will now outline a refutation in \(\mathcal{S} + \UINDR\) of the clause set \(\mathcal{C}_{0}\) given by
  \begin{align*}
    \cnf(\skE(\mathcal{T} + \neg \Forall{x}{\Forall{y}{x + y = y + x}})).
  \end{align*}
  Let \(\skE(\neg \Forall{x}{\Forall{y}{x + y = y + x}}) = (c_{1} + c_{2} \neq c_{2} + c_{1})\), then we have \(c_{1} \in L(\mathcal{C}_{0})\) and
  \begin{equation}
    \label{eq:11}
    \mathcal{C}_{0} \models c_{1} + c_{2} \neq c_{2} + c_{1}.
  \end{equation}
  Let \(\varphi_{1}(x) \coloneqq (c_{1} + x = x + c_{1})\), then we may apply the induction rule \(\UINDR\) to obtain the clause set \(\mathcal{C}_{1} \coloneqq \mathcal{C}_{0} \cup \cnf(\skE(I_{x}\varphi_{1}(x))).\)
  Let \(\skE(I_{x}\varphi_{1}(x)) = \left(\varphi_{1}(0) \wedge (\varphi_{1}(c_{3}) \rightarrow \varphi_{1}(s(c_{3})))\right) \rightarrow \forall x \varphi_{1}(x)\), then we have \(c_{3} \in L(\mathcal{C}_{1})\) and furthermore by \eqref{eq:11} we have
  \begin{equation}
    \label{eq:13}
    \mathcal{C}_{1} \models \neg \varphi_{1}(0) \vee \neg (\varphi_{1}(c_{3}) \rightarrow \varphi_{1}(s(c_{3}))).
  \end{equation}
  Since \(\mathcal{C}_{1} \models c_{1} = c_{1} + 0\), we have \(\mathcal{C}_{1} \models \varphi(c_{1},0) \leftrightarrow c_{1} = 0 + c_{1}\).
  Let \(\varphi_{2}(x) \coloneqq x = 0 + x\), then we apply the induction rule \(\UINDR\) in order to obtain the clause set \(\mathcal{C}_{2} \coloneqq \mathcal{C}_{1} \cup \cnf(\skE(I_{x}\varphi_{2})).\)
  Let \(\skE(I_{x}\varphi_{2}) \coloneqq (\varphi_{2}(0) \wedge (\varphi_{2}(c_{4}) \rightarrow \varphi_{2}(s(c_{4})))) \rightarrow \Forall{x}{\varphi_{2}}\), then by \eqref{eq:13} we have
  \begin{equation}
    \label{eq:14}
    \cnf(\mathcal{C}_{2}) \models \neg \varphi_{2}(0) \vee \neg (\varphi_{2}(c_{4}) \rightarrow \varphi_{2}(s(c_{4}))) \vee \neg (\varphi_{1}(c_{3}) \rightarrow \varphi_{1}(s(c_{3}))).
  \end{equation}
  Now observe that \(\mathcal{T} \models 0 = 0 + 0\) and \(\mathcal{T} \models 0 + s(c_{4}) = s( 0 + c_{4})\).
  Hence, \(\mathcal{T} \models c_{4} = 0 + c_{4} \rightarrow s(c_{4}) = s( 0 + c_{4})\), that is, \(\mathcal{T} \models \varphi_{2}(c_{4}) \rightarrow \varphi_{2}(c_{4})\) and \(\mathcal{T} \models \varphi_{2}(0)\).
  Therefore, by \eqref{eq:14} we obtain
  \begin{equation}
    \label{eq:15}
    \mathcal{C}_{2} \models \neg (\varphi_{1}(c_{3}) \rightarrow \varphi_{1}(s(c_{3}))).
  \end{equation}
  Recall that \(\varphi_{1}(x) = (c_{1} + x = x + c_{1})\).
  Since \(\mathcal{T} \models c_{1} + s(x) = s(c_{3} + x)\), we have by \eqref{eq:15}, \(\mathcal{C}_{2} \models \varphi_{1}(c_{3}) \leftrightarrow s(c_{3} + c_{1}) \neq s(c_{3}) + c_{1}\).
  Let \(\varphi_{3}(x) = (s(c_{3} + x) = s(c_{3}) + x)\), then by the above we obtain
  \begin{equation}
    \label{eq:16}
    \mathcal{C}_{2} \models \neg \varphi_{3}(c_{3}).
  \end{equation}
  Now we apply the induction rule \(\UINDR\) in order to obtain the clause set \(\mathcal{C}_{3} \coloneqq \mathcal{C}_{2} \cup \cnf(\skE(I_{x}\varphi_{3})).\)
  Let \(\skE(I_{x}\varphi_{3}) = (\varphi_{3}(0) \wedge (\varphi_{3}(c_{5}) \rightarrow \varphi_{3}(c_{5}))) \rightarrow \Forall{x}{\varphi_{3}}\), then by \eqref{eq:16} we have
  \begin{equation}
    \label{eq:18}
    \mathcal{C}_{3} \models \neg \varphi_{3}(0) \vee \neg(\varphi_{3}(c_{5}) \rightarrow \varphi_{3}(s(c_{5})).)
  \end{equation}
  Since \(\cnf(\mathcal{T}) \models s(c_{3} + 0) = s(c_{3}) = s(c_{3}) + 0\), we have \(\mathcal{C}_{3} \models \varphi(0)\).
  Moreover, \(\cnf(\mathcal{T}) \models s(c_{3} + s(c_{5})) = s(s(c_{3} + c_{5}))\), hence \(\cnf(\mathcal{T}) \models \varphi_{3}(c_{5}) \rightarrow \varphi_{3}(s(c_{5}))\).
  Hence, by \eqref{eq:18}, we have \(\mathcal{C}_{3} \models \bot\).
  Hence, by the refutational completeness of \(\mathcal{S}\) we obtain a refutation of \(\mathcal{C}_{3}\).
  Therefore, by combining the applications of \(\UINDR\) used to obtain \(\mathcal{C}_{3}\) with the \(\mathcal{S}\) refutation of \(\mathcal{C}_{3}\) we obtain a \(\mathcal{S} + \UINDR\) refutation of \(\mathcal{C}_{0}\).
\end{example}
Analyzing the rule \(\UINDR\) will give us some general insights about the role of Skolem symbols in saturation systems with induction, however in order to be more specific about particular methods we have to consider some restricted forms of this induction rule.
We start by introducing some additional terminology.
We call \emph{initial Skolem symbols} those Skolem symbols that arise from the Skolemization of the input problem and \emph{induction Skolem symbols} those Skolem symbols that are generated by an application of the induction rule.

Before we introduce a restriction of the induction rule that is of practical relevance we will discuss some remarkable design choices encountered in practical methods that we will incorporate into the induction rule:
\begin{itemize}
\item Syntactical restriction of induction formulas:
  The methods presented in \cite{reger2019,hajdu2020} restrict induction formulas to literals, \cite{kersani2013,kersani2014} restricts induction formulas to \(\exists_1\) formulas, and \cite{cruanes2015,cruanes2017} restricts induction formulas to \(\forall_{1}\) formulas.
\item Control over occurrences of Skolem symbols:
  The practical induction mechanisms exert control over occurrences of the induction Skolem symbols either by avoiding the introduction of Skolem symbols altogether \cite{kersani2013,kersani2014} or by introducing nullary Skolem symbols only \cite{reger2019,hajdu2020}, \cite{cruanes2015,cruanes2017}.
  In particular none of these methods allows for parameters in the induction formula.
  As a consequence induction Skolem symbols trivially occur as subterms of ground terms.
\end{itemize}
Restrictions on the shape of the induction formulas is a feature that is common to all methods for automated inductive theorem proving because it is currently still difficult to search efficiently for a syntactically unrestricted induction formula.
We incorporate this feature into the induction rule by parameterizing it by a set of formulas from which the induction formulas are constructed.
The second feature is only slightly more complicated to generalize.
If we are to allow induction formulas with quantifier alternations, then Skolemizing the corresponding induction axioms introduces Skolem symbols that are not nullary.
Hence variables may occur in the scope of induction Skolem symbols.
Therefore we generalize the second feature by explicitly requiring that variables do not occur within the scope of a Skolem symbol.
In other words we require that Skolem symbols may appear in the induction formula only in subterms of ground terms.
Both generalized features are captured by the following restricted induction rule.
\begin{definition}
  \label{def:8}
  Let \(\Gamma\) be a set of formulas, then the rule \(\INDRPF{\Gamma}\) is given by
  \begin{equation*}
    \begin{prooftree}
      \hypo{\mathcal{C}}
      \infer1[\(\INDRPF{\Gamma}\),]{\cnf(\skE(I_{x}\varphi(x, \vec{t})))}
    \end{prooftree}
  \end{equation*}
  where \(\mathcal{C}\) is a set of clauses, \(\varphi(x, \vec{z}) \in \Gamma\), and \(\vec{t}\) is a vector of ground \(L(\mathcal{C})\) terms.
\end{definition}
\begin{remark}
  \label{rem:5}
This restriction on occurrences of Skolem symbols is not only motivated by abstracting the current practice in \ac{AITP}, it is also of independent theoretical interest: As described in~\cite{Dowek08Skolemization}, Skolemization without this restriction in simple type theory makes the axiom of choice derivable, hence this restriction has been introduced in~\cite{Miller87Compact}.
This restriction is also used as an assumption for proving elementary deskolemization of proofs with cut in~\cite{Baaz12Complexity}, \cite{komara2021}.
\end{remark}

Let us again consider an example to illustrate the rule.
\begin{example}
  \label{ex:2}
  Consider the refutation carried out in Example~\ref{ex:1}.
  We have used the induction rule three times to derive the clause sets \(\cnf(\skE(I_{x}c_{1} + x = x + c_{1}))\), \(\cnf(\skE(I_{x}x = 0 + x))\), and \(\cnf(I_{x}s(c_{3} + x) = s(c_{3}) + x)\).
  All three induction formulas are equational atoms in which only nullary Skolem symbols appear.
  Hence the refutation outlined in Example~\ref{ex:1} is also a refutation in \(\mathcal{S} +\INDRPF{\mathrm{Eq}(\mathcal{T})}\), where \(\mathrm{Eq}(L)\) denotes the set of equational atoms over the language \(L\).
\end{example}
As with the rule \(\UINDR\) we now have to ask the question how the system behaves.
There are two major cases that we need to distinguish depending on whether the set of formulas \(\Gamma\) may contain initial Skolem symbols.
By letting \(\Gamma\) be a set of Skolem-free formulas, we can restrict the occurrences of all Skolem symbols in the induction formulas.
In Section~\ref{sec:restricted_induction_and_skolemization} we mainly concentrate on this case and provide a characterization for the refutability in a sound and refutationally complete saturation system with the rule \(\INDRPF{\Gamma}\), thus, settling the question.
In practical systems the initial Skolem symbols usually can appear in the induction formulas without restriction, that is, these systems correspond to the case where the formulas in \(\Gamma\) may contain initial Skolem symbols.
However, this case is actually part of a more general open problem concerning occurrences of Skolem symbols in axiom schemata, that we will not address in the this article (see Remark~\ref{rem:5}).
Nevertheless, we can handle the simple case when the initial Skolem symbols are nullary.
We will mainly deal with this case in Section~\ref{sec:unprovability} in order to provide an unprovability result for the methods described in \cite{reger2019} and \cite{hajdu2020}.
\section{Unrestricted induction and Skolemization}
\label{sec:unrestricted_induction_and_skolemization}
In the previous section we have abstractly described a common integration of induction into a saturation system via the induction rule \(\UINDR\).
In this section we will first represent a sound and refutationally complete saturation system extended by the rule \(\UINDR\) as a logical theory.
After that we make use of this representation in order to investigate the interaction between Skolemization and the induction rule.
\subsection{Representation as logical theory}
\label{sec:unrestricted_induction:characterization}
A useful technique when analyzing \ac{AITP} methods is to reduce the system to an ``equivalent'' logical theory.
Alternatively, when such a theory cannot be found it is a good practice to approximate the system by a logical theory as closely as possible.
The construction of that theory usually reveals the essential features of the method.
Moreover, we can then make use of powerful techniques from mathematical logic in order to study the theory.
In particular, we can compare methods in terms of their representative theories.
\begin{definition}
  \label{def:2}
  Let \(T\) be a theory, then we define the Skolem induction operator \(\SI\) by
  \[
    \SI(T) \coloneqq T + \skE(\IND{L(T)})
  \]
  By \(\SI^{i}(T)\) we denote the \(i\)-fold iteration of \(\SI\) on \(T\).
  Finally, we define \(\SI^{\omega}(T) \coloneqq \bigcup_{i < \omega}\SI^{i}(T)\).
\end{definition}
In the following we will show that the theory \(\SI^{\omega}(T)\) is a faithful representation of a saturation system extended by the induction rule \(\UINDR\) and operating on an initial clause set corresponding to a theory \(T\).
In other words, we will show that for a sound and refutationally complete saturation system \(\mathcal{S}\) and a theory \(T\), the saturation system \(\mathcal{S} + \UINDR\) refutes the clause set \(\cnf(\skE(T))\) if and only if \(\SI^{\omega}(\skE(T))\) is inconsistent.
Intuitively, we can see that this is the case because the operation \(\SI(T)\) corresponds to a simultaneous application of \(\UINDR\) to all \(L(T)\) formulas.
However, by the compactness theorem for first-order logic, only finitely many of these induction formulas actually appear in a proof of the inconsistency of \(\SI^{\omega}(\skE(T))\).
Hence we can derive the same induction axioms with the induction rule \(\UINDR\).
\begin{lemma}
  \label{lem:4}
  Let \(\mathcal{S}\) be a sound saturation system and \(T\) be a theory.
  If \(\mathcal{S} + \UINDR\) refutes \(\cnf(\skE(T))\), then the theory \(\SI^{\omega}(\skE(T))\) is inconsistent.
\end{lemma}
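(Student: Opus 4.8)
The plan is to take a given $\mathcal{S} + \UINDR$ refutation of $\cnf(\skE(T))$, read off the finitely many induction axioms that it uses, argue that each of them already occurs in $\SI^{\omega}(\skE(T))$, and then transfer the inconsistency witnessed by the refutation to $\SI^{\omega}(\skE(T))$ using the soundness of $\mathcal{S}$. The point where the argument really has to work is the matching of the \emph{induction} Skolem symbols produced by the rule $\UINDR$ with those produced by the operator $\SI$; this is exactly where the canonical naming schema of Definition~\ref{def:3} earns its keep.

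First I would fix a refutation $\mathcal{D}_{0}, \dots, \mathcal{D}_{n}$ with $\mathcal{D}_{0} = \cnf(\skE(T))$ and $\Box \in \mathcal{D}_{n}$, and split its steps into ordinary $\mathcal{S}$-inferences and applications of $\UINDR$. Listing the latter in the order in which they occur gives induction axioms $I_{x}\varphi_{1}, \dots, I_{x}\varphi_{k}$, the $i$-th such step adding precisely the clauses $\cnf(\skE(I_{x}\varphi_{i}))$, where $\varphi_{i}$ is a formula over the language of the clauses available at that stage. I would then set
\[
  \mathcal{A} \coloneqq \cnf(\skE(T)) \cup \bigcup_{i=1}^{k} \cnf(\skE(I_{x}\varphi_{i}))
\]
and observe that, discarding the $\UINDR$-steps (whose conclusions already belong to $\mathcal{A}$), the $\mathcal{S}$-reduct of the refutation is an $\mathcal{S}$-deduction of $\Box$ from $\mathcal{A}$. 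Soundness of $\mathcal{S}$ (Definition~\ref{def:30}) then yields $\mathcal{A} \models \Box$, that is, $\mathcal{A}$ is unsatisfiable.

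The remaining and central step is to show $\skE(I_{x}\varphi_{i}) \in \SI^{\omega}(\skE(T))$ for each $i$. Writing $T_{m} \coloneqq \SI^{m}(\skE(T))$, I would prove by induction along the refutation that the clause sets $\mathcal{D}_{j}$ admit a nondecreasing assignment of stages $m$ with $L(\mathcal{D}_{j}) \subseteq L(T_{m})$: the base case holds since $L(\mathcal{D}_{0}) = L(\skE(T)) = L(T_{0})$; an $\mathcal{S}$-inference introduces no new symbols and keeps the stage (here idempotence of $\sk^{\omega}$, i.e.\ $\sk^{\omega}(\sk^{\omega}(L)) = \sk^{\omega}(L)$, guarantees we never leave $\sk^{\omega}(L(T))$); and an application of $\UINDR$ with formula $\varphi_{i}$ over $L(T_{m})$ produces $I_{x}\varphi_{i} \in \IND{L(T_{m})}$, whence $\skE(I_{x}\varphi_{i}) \in \skE(\IND{L(T_{m})}) \subseteq \SI(T_{m}) = T_{m+1}$, raising the stage to $m+1$. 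Because the Skolem symbols occurring in $\cnf(\skE(I_{x}\varphi_{i}))$ (introduced by the rule) and in $\skE(I_{x}\varphi_{i}) \in T_{m+1}$ (introduced by $\SI$) are named canonically after the same formula $I_{x}\varphi_{i}$, the two coincide as syntactic objects; this is the crux of the argument and the step I expect to require the most care. Since the refutation is finite, all the $\skE(I_{x}\varphi_{i})$, together with $\skE(T) = T_{0}$, lie in $\bigcup_{m} T_{m} = \SI^{\omega}(\skE(T))$. As these sentences are logically equivalent to the clauses of the unsatisfiable set $\mathcal{A}$, the theory $\SI^{\omega}(\skE(T))$ is inconsistent, as required.
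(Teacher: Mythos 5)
Your overall strategy coincides with the paper's: induct along the refutation, track the language of the derived clause sets against the stages \(\SI^{m}(\skE(T))\), and use the definition of \(\SI\) (together with the canonical naming of Skolem symbols from Definition~\ref{def:3}) to conclude that each Skolemized induction axiom introduced by \(\UINDR\) literally belongs to the next stage. That part — which you rightly identify as the crux — is correct and is exactly what the paper does.

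The step that does not go through as written is the claim that, after discarding the \(\UINDR\)-steps, ``the \(\mathcal{S}\)-reduct of the refutation is an \(\mathcal{S}\)-deduction of \(\Box\) from \(\mathcal{A}\).'' Under Definitions~\ref{def:5} and~\ref{def:6}, an inference rule is a pair \(\mathcal{C}/\mathcal{D}\) indexed by the \emph{entire} current clause set, and a deduction from \(\mathcal{A}\) must at each step apply a rule whose premise set is the actual current state. In your reduct, at the point where the original refutation applied a rule \(\mathcal{D}_{j}/\mathcal{B}_{j}\), the current state is a proper superset of \(\mathcal{D}_{j}\) (it already contains the induction clauses from later steps), and nothing in the definition of a saturation system guarantees that a rule applicable at \(\mathcal{D}_{j}\) remains applicable at a superset: \(\mathcal{S}\) is an arbitrary sound system, with no monotonicity assumption. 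Hence soundness, which is a statement about derivability, cannot be invoked with initial set \(\mathcal{A}\). The intermediate claim \(\mathcal{A} \models \Box\) is nevertheless true, and the repair is precisely the paper's argument: show by induction on \(j\) that \(\mathcal{A} \models \mathcal{D}_{j}\) (the paper directly shows \(\SI^{j}(\skE(T)) \models \mathcal{C}_{j}\), which subsumes this), using for an \(\mathcal{S}\)-step that each clause of \(\mathcal{B}_{j}\) is derivable from \(\mathcal{D}_{j}\) itself by a one-step deduction — so soundness gives \(\mathcal{D}_{j} \models \mathcal{B}_{j}\), and entailment propagates — and for a \(\UINDR\)-step that the added clauses already lie in \(\mathcal{A}\). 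With that replacement your proof is correct and essentially identical to the paper's.
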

\begin{proof}
  We show the slightly stronger claim that for a \(\mathcal{S} + \UINDR\) deduction \(\mathcal{C}_{0} \subseteq \mathcal{C}_{1} \subseteq \dots \subseteq \mathcal{C}_{j}\) from  \(\cnf(\skE(T))\), we have \(L(\mathcal{C}_{j}) \subseteq L(\SI^{j}(\skE(T)))\) and \(\SI^{\omega}(\skE(T)) \models \mathcal{C}_{j}\).
  We proceed by induction on \(j\).
  For the induction base \(j = 0\) we have \(\SI^{\omega}(\skE(T)) \models \mathcal{C}_{0}\) and \(L(\mathcal{C}_{0}) \subseteq L(\SI^{0}(\skE(T)) = L(\skE(T))\), since \(\mathcal{C}_{0} \subseteq \cnf(\skE(T))\).
  For the induction step we consider the clause set \(\mathcal{C}_{j + 1}\).
  If \(\mathcal{C}_{j + 1}\) is obtained by an inference from \(\mathcal{S}\), then by the soundness of \(\mathcal{S}\) we have \(L(\mathcal{C}_{j + 1}) = L(\mathcal{C}_{j})\) and \(\mathcal{C}_{j} \models \mathcal{C}_{j + 1}\).
  Hence by the induction hypothesis we have \(\SI^{\omega}(\skE(T)) \models \mathcal{C}_{j + 1}\) and clearly \(L(\mathcal{C}_{j + 1}) = L(\mathcal{C}_{j}) \subseteq L(\SI^{j}(\skE(T))) \subseteq L(\SI^{j + 1}(\skE(T)))\).
  If \(\mathcal{C}_{j + 1}\) is obtained by an application of the \(\UINDR\) rule, then \(\mathcal{C}_{j + 1} = \mathcal{C}_{j} \cup \cnf(\skE(I_{x}\varphi(x, \vec{z})))\), where \(\varphi\) is an \(L(C_{j})\) formula.
  Since \(L(C_{j}) \subseteq L(\SI^{j}(\skE(T)))\) we have \(\skE(I_{x}\varphi(x,\vec{z})) \in \SI^{j + 1}(\skE(T))\), hence \(L(C_{j + 1}) \subseteq \SI^{j + 1}(\skE(T))\).
  Moreover since \(\SI^{j}(\skE(T)) \models C_{j}\) we clearly have \(\SI^{j + 1}(\skE(T)) \models C_{j + 1}\).
\end{proof}
\begin{lemma}
  \label{lem:8}
  Let \(\mathcal{S}\) be a refutationally complete saturation-based proof system and \(T\) be a theory.
  If the theory \(\SI^{\omega}(\skE(T))\) is inconsistent, then \(\mathcal{S} + \UINDR\) refutes \(\cnf(\skE(T))\).
\end{lemma}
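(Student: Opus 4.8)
The plan is to establish the converse of Lemma~\ref{lem:4} by reducing the inconsistency of the infinitary theory \(\SI^{\omega}(\skE(T))\) to a finite object that can be reconstructed by finitely many applications of \(\UINDR\) on top of a refutation produced by \(\mathcal{S}\). Since \(\SI^{\omega}(\skE(T)) = \bigcup_{i<\omega}\SI^{i}(\skE(T))\) is the union of the increasing chain \(\SI^{0}(\skE(T)) \subseteq \SI^{1}(\skE(T)) \subseteq \cdots\), the compactness theorem first yields a finite, inconsistent \(\Delta \subseteq \SI^{k}(\skE(T))\) for some \(k \in \mathbb{N}\). Because every member of \(\SI^{\omega}(\skE(T))\) is either an element of \(\skE(T)\) or of the shape \(\skE(I_{x}\varphi)\), it is a \(\forall_{1}\) sentence, so \(\cnf(\Delta) = \bigcup_{\delta \in \Delta}\cnf(\delta)\) is a well-defined and unsatisfiable clause set. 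The goal then splits into two parts: first, to exhibit a deduction from \(\cnf(\skE(T))\) that uses only \(\UINDR\) and reaches a clause set \(\mathcal{D}\) with \(\cnf(\Delta) \subseteq \mathcal{D}\); and second, to close off with an \(\mathcal{S}\)-refutation of \(\mathcal{D}\).

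The heart of the argument is the first part, which I would prove by induction on \(k\), establishing the claim that for every finite \(\Delta \subseteq \SI^{k}(\skE(T))\) there is a \(\UINDR\)-deduction from \(\cnf(\skE(T))\) to some \(\mathcal{D}\) with \(\cnf(\Delta) \subseteq \mathcal{D}\). For \(k = 0\) we have \(\cnf(\Delta) \subseteq \cnf(\skE(T))\) and the empty deduction works. For the step I would split \(\Delta = \Delta_{0} \cup \Delta_{1}\) with \(\Delta_{0} = \Delta \cap \SI^{k-1}(\skE(T))\) and \(\Delta_{1}\) consisting of the newly introduced axioms \(\skE(I_{x}\varphi)\), where \(\varphi\) ranges over \(L(\SI^{k-1}(\skE(T)))\)-formulas. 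To be allowed to reintroduce an axiom of \(\Delta_{1}\) by \(\UINDR\), the current clause set must already contain every symbol of \(\varphi\) in its language; the induction Skolem symbols of \(\varphi\) lie in \(L(\SI^{k-1}(\skE(T)))\) and hence, by the definition of the stage of a symbol, each is introduced by some induction axiom \(\skE(I_{x}\chi) \in \SI^{k-1}(\skE(T))\). Collecting these finitely many witnessing axioms together with \(\Delta_{0}\) yields a finite \(\Delta_{0}' \subseteq \SI^{k-1}(\skE(T))\), to which the induction hypothesis applies, producing \(\mathcal{D}' \supseteq \cnf(\Delta_{0}')\). Since deductions are monotone we have \(\cnf(\skE(T)) \subseteq \mathcal{D}'\), so \(L(\mathcal{D}')\) contains all initial symbols as well as all the induction Skolem symbols of the formulas occurring in \(\Delta_{1}\); thus every \(\varphi\) with \(\skE(I_{x}\varphi) \in \Delta_{1}\) is an \(L(\mathcal{D}')\)-formula and the corresponding \(\UINDR\) inferences are legal. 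Appending one such inference per element of \(\Delta_{1}\) gives the desired \(\mathcal{D}\).

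Having reached \(\mathcal{D} \supseteq \cnf(\Delta)\), the clause set \(\mathcal{D}\) is inconsistent because it extends the unsatisfiable \(\cnf(\Delta)\); by refutational completeness of \(\mathcal{S}\) there is an \(\mathcal{S}\)-refutation starting from \(\mathcal{D}\), which is in particular a refutation in \(\mathcal{S} + \UINDR\). Concatenating the \(\UINDR\)-deduction built above with this refutation produces an \(\mathcal{S} + \UINDR\) refutation of \(\cnf(\skE(T))\), as required. I expect the main obstacle to be the bookkeeping in the induction step, namely verifying that the induction Skolem symbols appearing in the stage-\(k\) formulas are genuinely produced by induction axioms available at stage \(k - 1\), and that the canonical Skolem naming makes these witnessing axioms members of \(\SI^{k-1}(\skE(T))\). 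This is exactly the point at which the well-foundedness of the stage function and the precise definition of \(\skE\) are needed to guarantee that the \(\UINDR\) applications can be scheduled so that each induction formula is over the language already generated.
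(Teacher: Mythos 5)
Your proof is correct and takes essentially the same route as the paper's: compactness yields a finite inconsistent subset of \(\SI^{\omega}(\skE(T))\), which is then re-derived from \(\cnf(\skE(T))\) by a sequence of legal \(\UINDR\) inferences and closed off using the refutational completeness of \(\mathcal{S}\). The only difference is that you make explicit—by induction on the stage \(k\) and by adding witnessing induction axioms for the Skolem symbols occurring in the stage-\(k\) induction formulas—the language bookkeeping that the paper compresses into ``there clearly exist sets \(S_{0}, \dots, S_{n}\)'' with \(S_n = S\); your version is in fact the more careful one, since such witnessing axioms (which need not belong to the finite set \(S\) itself) are exactly what is required to make each \(\UINDR\) application respect the side condition that the induction formula lies in the language of the current clause set.
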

\begin{proof}
  Assume that \(\SI^{\omega}(\skE(T))\) is inconsistent, then by the compactness theorem there exists a finite subset \(S\) of \(\SI^{\omega}(\skE(T))\) such that \(S\) is inconsistent.
  Furthermore there clearly exist sets \(S_{0}, S_{1}, \dots, S_{n}\) with \(n \in \mathbb{N}\) such that \(S_{0} \subseteq \skE(T)\), \(S \subseteq S_{n}\), and \(S_{i} = S_{i - 1} \cup \{\skE(I_{i})\}\), with \(I_{i} \in \IND{\SI^{i-1}(\skE(T))}\) and \(L(I_{i}) \subseteq L(S_{i})\), for \(i = 1, \dots, n\).

  Now we can easily construct a refutation of \(\cnf(\skE(T))\) in \(\mathcal{S} + \UINDR\) by letting \(\mathcal{C}_{0} = \cnf(\skE(T))\), and obtaining \(\mathcal{C}_{i} = \mathcal{C}_{i - 1} \cup \cnf(\skE(I_{i}))\) for \(i = 1, \dots, n\) by the \(\UINDR\) rule.
  Clearly, \(\mathcal{C}_{n}\) is logically equivalent to \(S_{n}\), therefore we obtain a refutation from \(\mathcal{C}_{n}\) because of the refutational completeness of \(\mathcal{S}\).
\end{proof}
We summarize the results so far in the following proposition.
\begin{proposition}
  \label{pro:6}
  Let \(\mathcal{S}\) be a sound and refutationally complete saturation-based proof system and \(T\) be a theory.
  Then \(\mathcal{S} + \UINDR\) refutes \(\cnf(\skE(T))\) if and only if the theory \(\SI^{\omega}(\skE(T))\) is inconsistent.
\end{proposition}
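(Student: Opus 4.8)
The plan is to observe that the biconditional is nothing more than the conjunction of Lemma~\ref{lem:4} and Lemma~\ref{lem:8}, each of which establishes one direction under a hypothesis that is strictly weaker than the combined assumption here. Lemma~\ref{lem:4} proves the forward implication using only soundness of \(\mathcal{S}\), and Lemma~\ref{lem:8} proves the backward implication using only refutational completeness of \(\mathcal{S}\). Since \(\mathcal{S}\) is assumed to be both sound and refutationally complete, both implications are available simultaneously, and chaining them yields the equivalence. I would therefore simply invoke the two lemmas.

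For the forward direction I would appeal to Lemma~\ref{lem:4}: given an \(\mathcal{S} + \UINDR\) refutation \(\mathcal{C}_0 \subseteq \dots \subseteq \mathcal{C}_n\) of \(\cnf(\skE(T))\), the key invariant maintained along the deduction is that \(\SI^{\omega}(\skE(T)) \models \mathcal{C}_j\) and \(L(\mathcal{C}_j) \subseteq L(\SI^{j}(\skE(T)))\). Plain \(\mathcal{S}\)-inferences preserve both the language and entailment by soundness, whereas a \(\UINDR\)-step introduces exactly an induction axiom \(\skE(I_{x}\varphi(x,\vec{z}))\) whose premise formula lives in \(L(\mathcal{C}_j) \subseteq L(\SI^{j}(\skE(T)))\), so the new clauses already belong to the next stage \(\SI^{j+1}(\skE(T))\). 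Since \(\Box \in \mathcal{C}_n\), the invariant forces \(\SI^{\omega}(\skE(T)) \models \Box\), i.e. inconsistency.

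For the backward direction I would appeal to Lemma~\ref{lem:8}: if \(\SI^{\omega}(\skE(T))\) is inconsistent, compactness extracts a finite inconsistent subset, which can be stratified as \(S_0 \subseteq \dots \subseteq S_n\) where each step adjoins a single Skolemized induction axiom drawn from \(\IND{\SI^{i-1}(\skE(T))}\). Each such step is faithfully realized by one application of \(\UINDR\), producing a clause set \(\mathcal{C}_n\) logically equivalent to \(S_n\); refutational completeness of \(\mathcal{S}\) then supplies the empty clause from \(\mathcal{C}_n\).

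I do not expect any genuine obstacle at the level of the proposition itself, since the substantive technical work has already been discharged in the two lemmas: the invariant bookkeeping on languages and entailment in Lemma~\ref{lem:4}, and the compactness-plus-stage-stratification argument in Lemma~\ref{lem:8}. The only point worth making explicit is that the two hypotheses are used independently and non-redundantly, soundness for one direction and refutational completeness for the other, which is precisely why the statement requires \(\mathcal{S}\) to possess both properties.
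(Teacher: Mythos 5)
Your proposal is correct and matches the paper exactly: the paper gives no separate proof for Proposition~\ref{pro:6}, presenting it precisely as the conjunction of Lemma~\ref{lem:4} (forward direction, via soundness) and Lemma~\ref{lem:8} (backward direction, via refutational completeness). Your summaries of the two lemmas' internal arguments also faithfully reflect how the paper proves them.
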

\begin{proof}
  An immediate consequence of Lemma~\ref{lem:4} and Lemma~\ref{lem:8}.
\end{proof}
The theory \(\SI^{\omega}(\skE(T))\) is still not very convenient to work with.
By working it a bit we can on the one hand eliminate the recursion that interleaves induction and Skolemization and secondly we can even ``factor'' out the Skolemization part.
We start by analyzing which Skolem symbols occur in the theories generated by \(\SI^{\omega}(\cdot)\).
Our first observation is that induction axioms that do not bind a free variable of the inducted upon formula allow us to introduce all the Skolem symbols.
\begin{lemma}
  \label{lem:1}
  \label{lem:9}
  Let \(\varphi(\vec{y})\) be a formula and \(u\) a variable which does not occur in \(\varphi\).
  Then \(L(\skE(\tilde{I}_{u}\varphi)) = L(\skE( \varphi \rightarrow \varphi))\) and moreover \(\vdash \skE(\tilde{I}_{u}\varphi) \leftrightarrow \skE( \varphi \rightarrow \varphi)\).
\end{lemma}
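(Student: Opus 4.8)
The plan is to reduce everything to a direct computation of the two existential Skolem forms, exploiting that $u \notin \FV(\varphi)$. First I would observe that, since $u$ does not occur in $\varphi$, every substitution for the induction variable in the induction formula is vacuous, so the axiom collapses to
\[
  \tilde{I}_{u}\varphi = \bigl(\varphi \wedge \forall u(\varphi \rightarrow \varphi)\bigr) \rightarrow \forall u \varphi .
\]
In particular the symbols $0$ and $s$ do not appear (unless they already occur in $\varphi$), which is already consistent with the language claim.

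Next I would unfold Definition~\ref{def:4} on both formulas, reading $A \rightarrow B$ as $\neg A \vee B$. On the right-hand side this gives immediately $\skE(\varphi \rightarrow \varphi) = \neg \skA(\varphi) \vee \skE(\varphi)$. For the left-hand side, the consequent is handled by the $\overline{Q}$-clause, yielding $\skE(\forall u \varphi) = \forall u\, \skE(\varphi)$, while the antecedent is handled by the $\wedge$- and $\neg$-clauses together with clause~\eqref{eq:37}. The crucial point occurs here: applying clause~\eqref{eq:37} to $\skA(\forall u(\varphi \rightarrow \varphi))$ would prescribe introducing the symbol $\mathfrak{s}_{\forall u(\varphi \rightarrow \varphi)}$, but since $u \notin \FV(\varphi \rightarrow \varphi)$ the required substitution is vacuous, so $\skA(\forall u(\varphi \rightarrow \varphi)) = \skA(\varphi \rightarrow \varphi) = \neg \skE(\varphi) \vee \skA(\varphi)$ and no new Skolem symbol is created. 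Assembling the pieces,
\[
  \skE(\tilde{I}_{u}\varphi) = \neg\bigl(\skA(\varphi) \wedge (\neg \skE(\varphi) \vee \skA(\varphi))\bigr) \vee \forall u\, \skE(\varphi).
\]

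From this explicit form both claims follow. For the language, I would read off that the only function and predicate symbols occurring are those of $\skA(\varphi)$ and of $\skE(\varphi)$ — the vacuous quantifier $\forall u$ contributes nothing and, as noted, no induction Skolem symbol is introduced — so $L(\skE(\tilde{I}_{u}\varphi)) = L(\skA(\varphi)) \cup L(\skE(\varphi)) = L(\skE(\varphi \rightarrow \varphi))$. For the provable equivalence I would use the propositional absorption law $a \wedge (\neg b \vee a) \equiv a$ to simplify the antecedent to $\skA(\varphi)$, together with the fact that $u$ is not free in $\skE(\varphi)$ to drop the vacuous quantifier ($\forall u\, \skE(\varphi) \equiv \skE(\varphi)$); both Skolem forms then reduce to $\neg \skA(\varphi) \vee \skE(\varphi)$, giving $\vdash \skE(\tilde{I}_{u}\varphi) \leftrightarrow \skE(\varphi \rightarrow \varphi)$.

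The only genuinely delicate step — the one I would state most carefully — is the vacuous application of clause~\eqref{eq:37}: one must check that the canonical naming schema of Definition~\ref{def:3} really does suppress $\mathfrak{s}_{\forall u(\varphi \rightarrow \varphi)}$ when $u$ does not occur, since it is precisely this suppression that makes the two languages coincide \emph{exactly} rather than up to one spurious symbol. Everything else is routine bookkeeping with the clauses of Definition~\ref{def:4} and propositional reasoning. I note in passing that a slicker argument via Proposition~\ref{pro:16} would only yield equivalence modulo $\SASchema{L}$, which is weaker than the purely logical equivalence asserted here, so the explicit computation seems unavoidable.
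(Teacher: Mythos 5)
Your proposal is correct and follows essentially the same route as the paper's proof: both compute \(\skE(\tilde{I}_{u}\varphi) = \skA(\varphi) \wedge (\skE(\varphi) \rightarrow \skA(\varphi)) \rightarrow \forall u\, \skE(\varphi)\) directly from Definition~\ref{def:4}, using that the substitution in clause~\eqref{eq:37} is vacuous when \(u\) does not occur (so no Skolem symbol \(\mathfrak{s}_{\forall u(\varphi \rightarrow \varphi)}\) is introduced), and then conclude both claims by reading off the language and by propositional absorption plus removal of the vacuous quantifier. Your write-up merely makes explicit the steps the paper compresses into ``clearly,'' including the correct observation that the purely logical equivalence cannot be replaced by an appeal to Proposition~\ref{pro:16}.
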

\begin{proof}
  Since the variable \(u\) does not occur in \(\varphi\), we clearly have
  \begin{align*}
    \skE(\tilde{I}_{u}\varphi)
    & = \skA(\varphi) \wedge \skA(\forall{u}{(\varphi \rightarrow \varphi)}) \rightarrow \skE(\Forall{u}{\varphi}) \\
    & = \skA(\varphi) \wedge (\skE(\varphi) \rightarrow \skA(\varphi)) \rightarrow \Forall{u}{(\skE(\varphi))}.
  \end{align*}
  Since \(\skE(\varphi \rightarrow \varphi) = \skA(\varphi) \rightarrow \skE(\varphi)\) we clearly have \(L(\skE(\tilde{I}_{u}\varphi)) = L(\skE(\varphi \rightarrow \varphi))\).
  Furthermore, \(\skE(\tilde{I}_{u}\varphi)\) clearly is logically equivalent to \(\skE(\varphi \rightarrow \varphi)\).
\end{proof}
The formulas of the form \(\skE(\varphi \rightarrow \varphi)\) are of interest because they correspond, roughly speaking, to Skolem axioms.
\begin{remark}
  \label{rem:3}
  The requirement in Lemma~\ref{lem:9} that the induction formula does not contain the induction variable is peculiar, but convenient to handle.
  A similar result as Lemma~\ref{lem:9} can be achieved without this assumption by working, for example, with induction formulas of the form \({u = u} \; \wedge \; \varphi\), where the variable \(u\) is not free in the formula \(\varphi\).
  In practice a system does usually not intentionally use its induction mechanism to introduce Skolem axioms.
  Instead some systems (for example \cite{cruanes2015,cruanes2017}) provide a lemma rule that introduces the clauses \(\cnf(\skE(\varphi \rightarrow \varphi))\) into the search space.
\end{remark}
\begin{lemma}
  \label{lem:5}
  Let \(T\) be a theory, then \(L(\SI^{\omega}(T)) = \sk^{\omega}(L(T) \cup L_{0})\).
\end{lemma}
\begin{proof}
  The inclusion \(\subseteq\) is obvious.
  For the inclusion \(\supseteq\) observe that
  \[
    L(\SI^{\omega}(T)) = \bigcup_{k < \omega}L(\SI^{k}(T)).
  \]
  Hence, it suffices to show that for every symbol \(\sigma \in \sk^{\omega}(L(T) \cup L_{0})\), there exists \(k \in \mathbb{N}\) such that \(\sigma \in L(\SI^{k + 1}(T))\).
  We proceed by induction on the stage of the symbol \(\sigma\).
  For the base case let \(\sigma\) have stage \(0\), then it belongs to \(L(T) \cup L_{0}\) and we already have \(\sigma \in L(\SI^{1}(T))\).
  Now if \(\sigma \in \sk^{\omega}(L(T) \cup L_{0})\) has stage \(n > 0\), then it is a Skolem symbol of the form \(\sigma = \mathfrak{s}_{\Quantifier{Q}{x}{\varphi}}\) with \(Q \in \{ \forall , \exists \}\) and \(\Quantifier{Q}{x}{\varphi}\) only contains symbols of stage less than \(n\).
  Hence by the induction hypothesis \(L(\Quantifier{Q}{x}{\varphi}) \subseteq L(\SI^{k + 1}(T))\) for some \(k \in \mathbb{N}\).
  Therefore \(\skE(I_{u}\Quantifier{Q}{x}{\varphi}) \in \SI^{k + 2}(T)\), thus by Lemma~\ref{lem:1} the symbol \(\mathfrak{s}_{\Quantifier{Q}{x}{\varphi}}\) belongs to \(L(\SI^{k + 2}(T))\), where \(u\) is a variable that does not occur freely in \(\Quantifier{Q}{x}{\varphi}\).
\end{proof}
With this in mind we see that \(\SI^{\omega}(T)\) contains the existential Skolemization of the \(\sk^{\omega}(L(T))\) induction schema.
This allows us to eliminate the iteration of the operator \(\SI(\cdot)\) that was used to build up the language of the induction.
\begin{lemma}
  \label{lem:6}
  Let \(T\) be a theory, then \(\SI^{\omega}(T) \vdash \skE(\IND{\sk^{\omega}(L(T) \cup L_{0})})\).
\end{lemma}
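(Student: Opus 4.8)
The plan is to show that every formula in \(\skE(\IND{\sk^{\omega}(L(T))})\) is in fact already an \emph{axiom} of some finite stage \(\SI^{k+1}(T)\), so that provability in \(\SI^{\omega}(T)\) becomes immediate. Recall that \(\IND{\sk^{\omega}(L(T))}\) abbreviates the induction schema \(\{ I_{x}\varphi \mid \varphi \in \mathcal{F}(\sk^{\omega}(L(T)))\}\) over the full language \(\sk^{\omega}(L(T))\), so a typical element of the target set has the form \(\skE(I_{x}\varphi)\) with \(\varphi(x,\vec{z})\) an \(\sk^{\omega}(L(T))\) formula.

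First I would fix such a \(\varphi\) and observe that its signature \(L(\varphi)\) is finite, since \(\varphi\) is a single formula. The key input is Lemma~\ref{lem:5}, which gives \(L(\SI^{\omega}(T)) = \sk^{\omega}(L(T))\); combined with \(L(\SI^{\omega}(T)) = \bigcup_{k<\omega} L(\SI^{k}(T))\) and the fact that the languages \(L(\SI^{k}(T))\) form an increasing chain (because \(\SI^{k}(T) \subseteq \SI^{k+1}(T)\)), this lets me locate each of the finitely many symbols of \(\varphi\) at some finite stage and take \(k\) to be the maximum of these finitely many stages. Hence there is a \(k \in \mathbb{N}\) with \(L(\varphi) \subseteq L(\SI^{k}(T))\), that is, \(\varphi\) is an \(L(\SI^{k}(T))\) formula.

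Once \(\varphi\) is recognized as an \(L(\SI^{k}(T))\) formula, it follows that \(I_{x}\varphi \in \IND{\SI^{k}(T)}\), and therefore \(\skE(I_{x}\varphi) \in \skE(\IND{\SI^{k}(T)}) \subseteq \SI^{k+1}(T) \subseteq \SI^{\omega}(T)\) directly by the definition of the operator \(\SI\). In particular \(\skE(I_{x}\varphi)\) is an axiom of \(\SI^{\omega}(T)\), so trivially \(\SI^{\omega}(T) \vdash \skE(I_{x}\varphi)\). Since \(\varphi\) was arbitrary, this yields \(\SI^{\omega}(T) \vdash \skE(\IND{\sk^{\omega}(L(T))})\).

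The only genuinely non-routine point is the existence of a \emph{uniform} stage \(k\) absorbing all symbols of \(\varphi\); this is exactly where finiteness of \(L(\varphi)\) together with Lemma~\ref{lem:5} are needed, and I do not expect any further obstacle. Note in particular that the new induction Skolem symbols occurring in \(\skE(I_{x}\varphi)\) itself need not lie in \(L(\SI^{k}(T))\): they are introduced precisely at stage \(k+1\), and we only require \(\varphi\) — not its Skolem form — to live over \(L(\SI^{k}(T))\).
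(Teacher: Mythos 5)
Your proposal is correct and follows essentially the same route as the paper's own proof: both invoke Lemma~\ref{lem:5} to identify \(L(\SI^{\omega}(T))\) with \(\sk^{\omega}(L(T))\), use finiteness of \(L(\varphi)\) to find a stage \(k\) with \(L(\varphi) \subseteq L(\SI^{k}(T))\), and conclude that \(\skE(I_{x}\varphi)\) is an axiom of \(\SI^{k+1}(T)\). Your write-up merely makes explicit two points the paper leaves implicit (the increasing-chain argument and the observation that the induction Skolem symbols of \(\skE(I_{x}\varphi)\) need not lie in \(L(\SI^{k}(T))\)), which is a faithful elaboration rather than a different argument.
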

\begin{proof}
  Let \(\varphi\) be an \(\sk^{\omega}(L(T) \cup L_{0})\) formula.
  By Lemma \ref{lem:5} we have \(L(\SI^{\omega}(T)) = \bigcup_{k < \omega}L(\SI^{k}(T)) = \sk^{\omega}(L(T) \cup L_{0})\).
  Hence, there exists \(k \in \mathbb{N}\) such that \(L(\varphi) \subseteq L(\SI^{k}(T))\).
  Therefore, \(\SI^{k + 1}(T) \vdash \skE(I_{x}\varphi)\).
\end{proof}
Again by using Lemma~\ref{lem:9} it is straightforward to see that by Skolemizing the induction schema \(\IND{\sk^{\omega}(L)}\) we actually obtain all the Skolem axioms.
\begin{lemma}
  \label{lem:10}
  Let \(L\) be a first-order language, then \(\skE(\IND{\sk^{\omega}(L)}) \vdash \SASchema{L}\).
\end{lemma}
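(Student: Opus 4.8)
The plan is to derive, for every Skolem symbol $\mathfrak{s}_{Qx\varphi}\in\sk^{\omega}(L)$, the corresponding Skolem axiom $\SA_{x}^{Q}\varphi$ from $\skE(\IND{\sk^{\omega}(L)})$; I treat the case $Q=\exists$, the case $Q=\forall$ being dual. The bridge between the induction schema and the Skolem axioms is Lemma~\ref{lem:9}. For any $\sk^{\omega}(L)$-formula $\psi$ and a fresh variable $u$ not occurring in $\psi$, the axiom $I_{u}\psi=\forall\vec{y}\,\tilde{I}_{u}\psi$ lies in $\IND{\sk^{\omega}(L)}$, and its existential Skolem form satisfies $\vdash\skE(\tilde{I}_{u}\psi)\leftrightarrow\skE(\psi\rightarrow\psi)$. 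Since $\skE(I_{u}\psi)=\forall\vec{y}\,\skE(\tilde{I}_{u}\psi)$, this already shows that $\skE(\IND{\sk^{\omega}(L)})\vdash\skE(\psi\rightarrow\psi)$ for every $\sk^{\omega}(L)$-formula $\psi$, where, as computed inside the proof of Lemma~\ref{lem:9}, $\skE(\psi\rightarrow\psi)=\skA(\psi)\rightarrow\skE(\psi)$.

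Next I would instantiate this with $\psi\coloneqq\exists x\,\varphi(x,\vec{y})$. Unfolding the Skolemization clauses gives $\skA(\exists x\varphi)=\exists x\,\skA(\varphi)$ and $\skE(\exists x\varphi)=\skE(\varphi(\mathfrak{s}_{\exists x\varphi}(\vec{y}),\vec{y}))$, so the derived formula reads
\[
  \exists x\,\skA(\varphi(x,\vec{y}))\;\rightarrow\;\skE(\varphi(\mathfrak{s}_{\exists x\varphi}(\vec{y}),\vec{y})).
\]
When $\varphi$ is quantifier-free both Skolemizations act as the identity, and this implication is literally $\SA_{x}^{\exists}\varphi$; this settles the base case.

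For the general case I would proceed by induction on the number of quantifiers of $\varphi$, the goal being to replace the Skolem normal forms $\skA(\varphi)$ and $\skE(\varphi(\mathfrak{s}))$ by the raw $\varphi$. Every Skolem symbol occurring in these Skolemizations has the form $\mathfrak{s}_{Q'z\chi}$ with $Q'z\chi$ a subformula occurrence of $\varphi$, hence with strictly fewer quantifiers than $\exists x\varphi$; by the induction hypothesis their Skolem axioms are already provable from $\skE(\IND{\sk^{\omega}(L)})$. Feeding exactly these axioms into the argument of Proposition~\ref{pro:16} yields $\varphi(x,\vec{y})\leftrightarrow\skA(\varphi(x,\vec{y}))$ and $\varphi(\mathfrak{s}(\vec{y}),\vec{y})\leftrightarrow\skE(\varphi(\mathfrak{s}(\vec{y}),\vec{y}))$. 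Chaining these equivalences with the implication above converts it into $\exists x\,\varphi(x,\vec{y})\rightarrow\varphi(\mathfrak{s}_{\exists x\varphi}(\vec{y}),\vec{y})$, which is $\SA_{x}^{\exists}\varphi$, completing the induction.

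The main obstacle is precisely this last mismatch: once Skolemized, the induction schema only hands us $\skE(\psi\rightarrow\psi)$, whose antecedent and consequent are the Skolem normal forms of $\varphi$ rather than $\varphi$ itself, whereas the axioms in $\SASchema{L}$ are stated for the raw formula. Resolving it requires the stratified use of Proposition~\ref{pro:16}, taking care that the passage from $\varphi$ to its Skolemization appeals only to Skolem axioms of strictly lower quantifier complexity, so that the recursion is well founded and not circular.
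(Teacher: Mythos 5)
Your proof is correct, and it shares with the paper the same key bridge: instantiate the Skolemized induction axiom on a dummy induction variable and apply Lemma~\ref{lem:9} to obtain \(\skA(\psi) \rightarrow \skE(\psi)\). From that point on, however, the two arguments genuinely diverge. You convert the Skolemized forms back into the raw formula by proving full \emph{equivalences} \(\varphi \leftrightarrow \skA(\varphi)\) and \(\varphi(\mathfrak{s}(\vec{y}), \vec{y}) \leftrightarrow \skE(\varphi(\mathfrak{s}(\vec{y}), \vec{y}))\), which forces you into an induction on quantifier complexity and a stratified re-reading of Proposition~\ref{pro:16} (only axioms for the partially Skolemized subformulas, of strictly lower quantifier count, may be fed in; moreover the \(\forall\) and \(\exists\) cases must be carried along simultaneously in the induction, since \(\skA\) Skolemizes quantifiers of both polarities). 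The paper shows this recursion is unnecessary: one never needs equivalences, only one implication on each side, and these are pure-logic facts from Proposition~\ref{pro:3}, namely \(\vdash \skE(\chi) \rightarrow \chi\) and \(\vdash \chi \rightarrow \skA(\chi)\). For instance, for \(\SA_{x}^{\exists}\varphi\) the paper (after contraposing the instance of Lemma~\ref{lem:9} for \(\neg \exists x \varphi\)) chains
\[
  \exists x \varphi \rightarrow \skA(\exists x \varphi) \rightarrow \skE(\exists x \varphi) = \skE(\varphi(\mathfrak{s}_{\exists x \varphi}(\vec{y}), \vec{y})) \rightarrow \varphi(\mathfrak{s}_{\exists x \varphi}(\vec{y}), \vec{y}),
\]
where the first and last implications are instances of Proposition~\ref{pro:3}; dually for \(\SA_{x}^{\forall}\varphi\). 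So the ``main obstacle'' you identify dissolves once one notices that the directions of Proposition~\ref{pro:3} line up exactly with what is needed. What your route buys is an explicit display of the stratification of Skolem axioms by quantifier complexity; what it costs is the well-foundedness and simultaneity bookkeeping, all of which the paper's two applications of Proposition~\ref{pro:3} avoid.
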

\begin{proof}
  Let \(\varphi(x, \vec{y})\) be an \(\sk^{\omega}(L)\) formula and \(u\) be a variable not occurring freely in \(\varphi\).
  Work in \(\skE(\IND{\sk^{\omega}(L)})\), then in particular we have \(\skE(\tilde{I}_{u}(\Forall{x}{\varphi(x,\vec{y})}))\).
  We apply Lemma~\ref{lem:9} to \(\tilde{I}_{u}(\Forall{x}{\varphi})\) in order to obtain
  \[
    \skA(\Forall{x}{\varphi(x, \vec{y})}) \rightarrow \skE(\Forall{x}{\varphi(x, \vec{y})}).
  \]
  By Proposition \ref{pro:3}  we have \(\vdash \skE(\Forall{x}{\varphi}) \rightarrow \Forall{x}{\varphi}\), and hence we obtain
  \[
    \skA(\Forall{x}{\varphi(x, \vec{y})}) \rightarrow \Forall{x}{\varphi(x, \vec{y})}.
  \]
  Now observe that \(\skA(\Forall{x}{\varphi(x, \vec{y})}) = \skA(\varphi(\mathfrak{s}_{\Forall{x}{\varphi(x, \vec{y})}}(\vec{y}), \vec{y}))\). Again by Proposition \ref{pro:3} we have \(\vdash \psi \rightarrow \skA(\psi)\) for all \(\sk^{\omega}(L)\) formulas \(\psi\).
  Therefore, we obtain the desired Skolem axiom
  \[
    \varphi(\mathfrak{s}_{\Forall{x}{\varphi}}(\vec{y}), \vec{y})) \rightarrow \Forall{x}{\varphi}.
  \]
  Now in order to obtain \(\SA_{x}^{\exists}\varphi\) we start with \(\skE(\tilde{I}_{u}(\neg \Exists{x}{\varphi}))\) and apply Lemma~\ref{lem:9} to obtain
  \[
    \skA(\neg \Exists{x}{\varphi}) \rightarrow \skE(\neg \Exists{x}{\varphi}).
  \]
  From this we clearly obtain \(\neg \skE(\Exists{x}{\varphi}) \rightarrow \neg \skA(\Exists{x}{\varphi})\), thus, we get
  \[
    \skA(\Exists{x}{\varphi}) \rightarrow \skE(\Exists{x}{\varphi}).
  \]
  Now by Proposition \ref{pro:3} we first obtain \(\Exists{x}{\varphi} \rightarrow \skE(\varphi(\mathfrak{s}_{\Exists{x}{\varphi}}(\vec{y}), \vec{y}))\).
  Since \(\skE(\Exists{x}{\varphi(x, \vec{y})}) = \skE(\varphi(\mathfrak{s}_{\Exists{x}{\varphi}}(\vec{y}), \vec{y}))\), we get \(\Exists{x}{\varphi} \rightarrow \varphi(\mathfrak{s}_{\Exists{x}{\varphi}}(\vec{y}), \vec{y})\) by another application of \ref{pro:3}.
\end{proof}
\begin{proposition}
  \label{pro:7}
  Let \(T\) be a theory, then
  \[
    \SI^{\omega}(\skE(T)) \equiv \SASchema{(L(T) \cup L_{0})} + T + \IND{\sk^{\omega}(L(T) \cup L_{0})}.
  \]
\end{proposition}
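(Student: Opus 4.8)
The plan is to transform $\SI^{\omega}(\skE(T))$ by a short chain of equivalences that first pulls Skolemization out of the $\SI$-iteration and then trades it for the Skolem axioms $\SASchema{L(T)}$, following the two simplifications announced just before the statement. The first step is to show
\[
  \SI^{\omega}(\skE(T)) \equiv \skE(T) + \skE(\IND{\sk^{\omega}(L(T))}).
\]
For the inclusion from right to left, $\skE(T) \subseteq \SI^{\omega}(\skE(T))$ holds by definition, while $\SI^{\omega}(\skE(T)) \vdash \skE(\IND{\sk^{\omega}(L(T))})$ is Lemma~\ref{lem:6} applied to the theory $\skE(T)$, using that $\sk^{\omega}(L(\skE(T))) = \sk^{\omega}(L(T))$ since $\skE$ only adjoins Skolem symbols of $L(T)$. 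For the converse inclusion, every axiom of $\SI^{\omega}(\skE(T))$ is either an axiom of $\skE(T)$ or an existential Skolem form $\skE(I_{x}\varphi)$ whose formula $\varphi$ lives in some $L(\SI^{i}(\skE(T))) \subseteq \sk^{\omega}(L(T))$ by Lemma~\ref{lem:5}, and hence already occurs on the right-hand side.

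Next I would recover the Skolem axioms for free: by Lemma~\ref{lem:10} the schema $\skE(\IND{\sk^{\omega}(L(T))})$ already proves $\SASchema{L(T)}$, so adjoining $\SASchema{L(T)}$ changes nothing, giving
\[
  \skE(T) + \skE(\IND{\sk^{\omega}(L(T))}) \equiv \SASchema{L(T)} + \skE(T) + \skE(\IND{\sk^{\omega}(L(T))}).
\]
Finally I would de-Skolemize. With $\SASchema{L(T)}$ now available, Proposition~\ref{pro:16} gives $\SASchema{L(T)} \vdash \psi \leftrightarrow \skE(\psi)$ for every $\sk^{\omega}(L(T))$ formula $\psi$. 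Since both the axioms of $T$ and all induction axioms $I_{x}\varphi$ (with $\varphi$ a $\sk^{\omega}(L(T))$ formula) are themselves $\sk^{\omega}(L(T))$ formulas, each may be replaced by its Skolem form and conversely, yielding
\[
  \SASchema{L(T)} + \skE(T) + \skE(\IND{\sk^{\omega}(L(T))}) \equiv \SASchema{L(T)} + T + \IND{\sk^{\omega}(L(T))}.
\]
Chaining the three equivalences proves the proposition; this last step is the same Skolem-axiom elimination that underlies Lemma~\ref{lem:17}, now applied uniformly to $T$ together with the full Skolemized-language induction schema.

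The genuine content is concentrated in the first step: one must verify that the recursion interleaving induction and Skolemization generates \emph{exactly} the induction axioms over $\sk^{\omega}(L(T))$ — no induction formula arising at a stage-$i$ language escapes $\sk^{\omega}(L(T))$, and conversely every $\sk^{\omega}(L(T))$ induction formula is eventually produced. This is precisely the bookkeeping discharged by Lemmas~\ref{lem:5} and~\ref{lem:6}, with the only delicate point being the language identity $\sk^{\omega}(L(\skE(T))) = \sk^{\omega}(L(T))$. The remaining two steps are routine applications of Lemma~\ref{lem:10} and Proposition~\ref{pro:16}.
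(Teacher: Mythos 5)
Your proof is correct and follows essentially the same route as the paper's: both rest on the language identity \(\sk^{\omega}(L(\skE(T))) = \sk^{\omega}(L(T))\) and the same three ingredients (Lemmas~\ref{lem:5}/\ref{lem:6} for generating the full Skolemized induction schema, Lemma~\ref{lem:10} for recovering \(\SASchema{L(T)}\), and Proposition~\ref{pro:16} for de-Skolemization). The only difference is organizational --- you chain three intermediate equivalences where the paper argues the two directions of the equivalence separately --- and your explicit bookkeeping that every axiom of \(\SI^{\omega}(\skE(T))\) already lies in \(\skE(T) + \skE(\IND{\sk^{\omega}(L(T))})\) is exactly the step the paper dismisses as ``easy to see.''
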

\begin{proof}
  First of all observe that \(\sk^{\omega}(L(\skE(T)) \cup L_{0}) = \sk^{\omega}(L(T) \cup L_{0})\) and therefore \(\SASchema{(L(\skE(T)) \cup L_{0})} = \SASchema{(L(T) \cup L_{0})}\).
  For the direction from right to left we observe that
  \begin{multline*}
      \SASchema{(L(T)\cup L_{0})} + T + \IND{\sk^{\omega}(L(T) \cup L_{0})} \vdash \\ \skE(T) + \skE(\IND{\sk^{\omega}(L(T) \cup L_{0})}).
  \end{multline*}
  With this in mind it is straightforward to see that \(\SASchema{(L(T)\cup L_{0})} + T + \IND{\sk^{\omega}(L(T) \cup L_{0})} \vdash \SI^{\omega}(\skE(T))\).
  For the direction from left to right, we observe that by Lemmas~\ref{lem:6}, \ref{lem:10} we have
  \[
    \SI^{\omega}(\skE(T)) \vdash \SASchema{(L(T) \cup L_{0})} + \skE(T) + \skE(\IND{\sk^{\omega}(L(T) \cup L_{0})}).
  \]
  Hence, by Proposition~\ref{pro:16} we obtain
  \[
    \SI^{\omega}(\skE(T)) \vdash \SASchema{(L(T) \cup L_{0})} + T + \IND{\sk^{\omega}(L(T) \cup L_{0})}. \qedhere
  \]
\end{proof}
As an immediate consequence of the results above we obtain the following characterization of refutability in a sound and refutationally complete saturation based system extended by the induction rule \(\UINDR\).
\begin{theorem}
  \label{thm:4}
  Let \(\mathcal{S}\) be a saturation system, \(T\) a theory, and \(\varphi\) an \(L(T)\) sentence.
  \begin{enumerate}[label=(\roman*)]
  \item If \(\mathcal{S}\) is sound and \(\mathcal{S} + \UINDR\) refutes \(\cnf(\skE(T + \neg \varphi))\), then \[
      \SASchema{(L(T) \cup L_{0})} + T + \IND{\sk^{\omega}(L(T) \cup L_{0})} \vdash \varphi.
    \]
  \item If \(\mathcal{S}\) is refutationally complete and \[
      \SASchema{(L(T) \cup L_{0})} + T + \IND{\sk^{\omega}(L(T) \cup L_{0})} \vdash \varphi,\] then \(\mathcal{S} + \UINDR\) refutes \(\cnf(\skE(T + \neg \varphi))\).
  \end{enumerate}
\end{theorem}
\begin{proof}
  Statement~\textit{(i)} is an immediate consequence of Lemma~\ref{lem:4} and Proposition~\ref{pro:7} and Statement~\textit{(ii)} is an immediate consequence of Lemma~\ref{lem:8} and Proposition~\ref{pro:7}.
\end{proof}
As a corollary we obtain the soundness of the \(\UINDR\) rule with respect to the standard model of an arithmetical language.
\begin{corollary}
  \label{cor:3}
  Let \(\mathcal{S}\) be a sound saturation system, \(L\) an arithmetical language, \(T\) a sound \(L\) theory, and \(\sigma\) an \(L\) sentence.
  If \(\mathcal{S} + \UINDR\) refutes the clause set \(\cnf(\skE(T + \neg \sigma))\), then \(\mathbb{N}_{L} \models \sigma\).
\end{corollary}
\begin{proof}
  By \textit{(i)} of Theorem~\ref{thm:4} it suffices to show that \[\SASchema{L} + T + \IND{\sk^{\omega}(L)} \sqsubseteq_{L} \mathrm{Th}(\mathbb{N}_{L}).\]
  This is shown by expanding \(\mathbb{N}_{L}\) by suitable Skolem functions, just as in the traditional model-theoretic proof of Proposition~\ref{pro:9}.
  The resulting structure satisfies \(\IND{\sk^{\omega}(L)}\) since \(\mathbb{N}_{L}\) has induction for all subsets of \(\mathbb{N}\).
\end{proof}
We conclude this section with a remark.
\begin{remark}
  \label{rem:2}
  In the presence of the Skolem axioms every formula is equivalent to an open formula.
  In particular, for a language \(L\), we have
  \[
    \SASchema{(L\cup L_{0})} + \IND{\Open(\sk^{\omega}(L))} \vdash \IND{\sk^{\omega}(L)}.
  \]
  Thus, we can formulate Theorem~\ref{thm:4} in a slightly more canonical way, by using \(\IND{\Open(\sk^{\omega}(L))}\) in place of \(\IND{\sk^{\omega}(L)}\).
  In other words, in the presence of Skolem axioms Skolem symbols permit us to simulate quantification.
  Conceptually, we can thus split the unrestricted induction rule of Definition~\ref{def:32} into a lemma rule and an induction rule for clause sets. 
\end{remark}
\subsection{Conservativity}
\label{sec:conservativity}
In the previous section we have characterized the extension of a sound and refutationally complete saturation system by the induction rule \(\UINDR\) in terms of a theory with induction over formulas that contain Skolem symbols.
This gives rise to the question how the addition of Skolem symbols to the language of the induction schema affects the strength of the system.
In particular, can we provide an equivalent Skolem-free induction schema?
Let \(L\) be a Skolem-free language and \(T\) an \(L\) theory, then a natural candidate for a Skolem-free characterization of the strength of $\SASchema{L} + T + \IND{\sk^{\omega}(L)}$ is the theory $T + \IND{L}$.
 \begin{question}
   \label{que:3}
  Let \(L\) be a Skolem-free language and \(T\) an \(L\) theory, do we have
  \[
    \SASchema{L} + T + \IND{\sk^{\omega}(L)} \sqsubseteq_{L} T + \IND{L}?
  \]
\end{question}
In the following we give a partial answer to the above question.
The general case remains open.
Our answer relies on the following idea: If a Skolem function is definable in terms of an \(L\) formula then wherever the Skolem symbols occurs we can instead use its definition to eliminate the symbol.
\begin{definition}
  \label{def:11}
  Let \(L\) be a Skolem-free language and \(M\) an \(L\) structure.
  A function \(f : |M|^{k} \to |M|\) is called \(L\)-definable in \(M\) if there exists an \(L\) formula \(\varphi(\vec{x}, y)\) such that for all \(\vec{d} \in |M|^{k}\) we have \(f(\vec{d}) = b\) if and only if \(M \models \varphi(\vec{d}, b)\).
\end{definition}
\begin{definition}
  \label{def:17}
  Let \(L\) be a Skolem-free language.
  We say that an \(L\) structure \(M\) has definable Skolem functions if for every \(L\) formula \(\varphi(\vec{x}, y)\) there exists a function \(f : |M|^{k} \to |M|\) that is \(L\)-definable in \(M\) and
  \[
    \text{\(M \models \Exists{y}{\varphi(\vec{d},y)} \rightarrow \varphi(\vec{d}, f(\vec{d}))\), for all \(\vec{d} \in |M|^{k}\)}.
  \]
\end{definition}
\begin{proposition}
  \label{pro:2}
  Let \(T\) be a Skolem-free theory.
  If every model \(M\) of \(T + \IND{L(T)}\) has definable Skolem functions, then
  \[
    \SASchema{L(T)} + T + \IND{\sk^{\omega}(L(T))} \equiv_{L(T)} T + \IND{L(T)}.
  \]
\end{proposition}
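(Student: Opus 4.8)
The plan is to prove the two conservativity directions separately. The direction asserting that every \(L(T)\) consequence of \(T + \IND{L(T)}\) is a consequence of \(\SASchema{L(T)} + T + \IND{\sk^{\omega}(L(T))}\) is immediate: since \(L(T) \subseteq \sk^{\omega}(L(T))\) we have \(\IND{L(T)} \subseteq \IND{\sk^{\omega}(L(T))}\), so the right-hand theory contains the left-hand one as a subset of axioms and hence proves everything it proves. The entire content of the proposition lies in the converse inclusion \(\SASchema{L(T)} + T + \IND{\sk^{\omega}(L(T))} \sqsubseteq_{L(T)} T + \IND{L(T)}\), which I would establish model-theoretically in the usual way: it suffices to show that every model \(M\) of \(T + \IND{L(T)}\) expands, without changing its \(L(T)\)-reduct, to a model of \(\SASchema{L(T)} + T + \IND{\sk^{\omega}(L(T))}\). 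Then any \(L(T)\) sentence refutable over \(T + \IND{L(T)}\) is refutable over the larger theory, giving the conservativity.

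So fix \(M \models T + \IND{L(T)}\); by hypothesis \(M\) has definable Skolem functions. I would build an increasing chain \(M = M_0 \subseteq M_1 \subseteq \cdots\) of expansions, with \(M_i\) an \(\sk^{i}(L(T))\)-structure sharing the domain and \(L(T)\)-reduct of \(M\), maintaining the invariant that every Skolem symbol of \(\sk^{i}(L(T))\) is interpreted in \(M_i\) by a function that is \(L(T)\)-definable in \(M\). The crucial consequence of this invariant is that every \(\sk^{i}(L(T))\) formula is equivalent in \(M_i\) to an \(L(T)\) formula, obtained by replacing each Skolem term by its definition and eliminating it through existential quantification over its (functional, total) graph. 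Two things follow at once. First, \(M_i \models \IND{\sk^{i}(L(T))}\), since each instance \(I_x\chi(x,\vec z)\) reduces to an instance \(I_x\chi^{*}(x,\vec z)\) of \(\IND{L(T)}\), which \(M\) satisfies; here one must check that the translation \(\chi \mapsto \chi^{*}\) is uniform in the parameters \(\vec z\), so that the reduced instance is genuinely an instance of \(\IND{L(T)}\). Second, \(M_i\) again has definable Skolem functions for \(\sk^{i}(L(T))\) formulas, because the \(L(T)\)-definable witness supplied by the hypothesis for \(\chi^{*}\) still witnesses \(\chi\) in \(M_i\) and remains definable there. This lets the construction proceed from \(M_i\) to \(M_{i+1}\): I interpret each stage-\((i+1)\) symbol \(\mathfrak{s}_{\exists x\varphi}\) (respectively \(\mathfrak{s}_{\forall x\varphi}\)) by the \(L(T)\)-definable Skolem function witnessing \(\varphi\) (respectively \(\neg\varphi\)), which validates the corresponding axiom of \(\SASchema{L(T)}\) and preserves the invariant. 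The base expansion \(M_0 \to M_1\) is exactly the definable analogue of the model-theoretic construction underlying Proposition~\ref{pro:9}.

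Finally I would set \(M_\omega \coloneqq \bigcup_{i<\omega} M_i\), the expansion of \(M\) to \(\sk^{\omega}(L(T))\) interpreting each symbol of stage \(i\) as in \(M_i\). Every axiom of \(\SASchema{L(T)} + T + \IND{\sk^{\omega}(L(T))}\) mentions only finitely many symbols and therefore lies within some \(\sk^{i}(L(T))\); since \(M_\omega\) and \(M_i\) share the domain and agree on the symbols occurring in it, \(M_\omega\) satisfies it. As \(M_\omega\) has the same \(L(T)\)-reduct as \(M\), the expansion argument, and with it the conservativity, is complete.

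I expect the main obstacle to be the inductive maintenance of the invariant rather than the closing union step. The hypothesis only supplies definable Skolem functions for \(L(T)\) formulas over \(M\), whereas the higher-stage Skolem symbols require Skolemizing formulas that themselves already contain Skolem symbols; the whole argument hinges on the observation that interpreting the lower-stage Skolem symbols by \(L(T)\)-definable functions keeps \emph{all} definable sets \(L(T)\)-definable, so that both the induction schema \(\IND{\sk^{i}(L(T))}\) and the definable-Skolem-function property are inherited at each stage. Making the formula translation, and in particular its uniformity in the induction parameters, precise is where the real care is needed.
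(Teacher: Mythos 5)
Your proposal is correct and follows essentially the same route as the paper's proof: an expansion of a given model of \(T + \IND{L(T)}\) built stage by stage through \(\sk^{i}(L(T))\), interpreting each new Skolem symbol by an \(L(T)\)-definable witnessing function (for \(\varphi\), resp.\ \(\neg\varphi\)), using the resulting \(L(T)\)-definability of all higher-stage formulas to transfer both the induction schema and the definable-Skolem-function property, and concluding by taking the union and arguing by contraposition. The only cosmetic difference is bookkeeping: the paper isolates the formula-translation in two lemmas (Lemmas~\ref{lem:47} and~\ref{lem:46}), builds the full expansion in Lemma~\ref{lem:50}, and verifies \(\IND{\sk^{\omega}(L(T))}\) once at the end, whereas you carry \(\IND{\sk^{i}(L(T))}\) as part of the stage invariant and pass to the limit by finiteness of formulas.
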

For the sake of the presentation we have moved the proof of Proposition~\ref{pro:2} to Appendix~\ref{sec:appendix:a}.
The proof essentially proceeds by replacing in each model the occurrences of the Skolem symbols by instances of their defining formulas.

In order to illustrate Proposition \ref{pro:2} we will consider some practically relevant special cases.
An important special case of Proposition~\ref{pro:2} is when the Skolem functions are definable already in a theory.
\begin{definition}
  \label{def:18}
  Let \(T\) be a Skolem-free theory.
  We say that \(T\) has definable Skolem functions if for each \(L(T)\) formula \(\varphi(\vec{x}, y)\), there exists an \(L(T)\) formula \(\psi(\vec{x}, y)\) such that
  \[
    T \vdash \Exists{y}{\varphi(\vec{x}, y)} \rightarrow \ExistsExOne{y}{(\psi(\vec{x}, y) \wedge \varphi(\vec{x}, y))}.
  \]
\end{definition}
\begin{proposition}
  \label{pro:13}
  Let \(T\) be a Skolem-free theory with definable Skolem functions, then every model of \(T\) has definable Skolem functions.
\end{proposition}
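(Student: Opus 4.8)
The plan is to fix an arbitrary model $M \models T$ together with a formula $\varphi(\vec{x}, y)$, where $\vec{x} = (x_{1}, \dots, x_{k})$, and to produce an $L(T)$-definable total function $f \colon M^{k} \to M$ witnessing $\varphi$ wherever a witness exists. The naive attempt is to take the formula $\psi$ supplied by Definition~\ref{def:18} and to read off $f(\vec{d})$ as the unique $b$ with $M \models \psi(\vec{d}, b) \wedge \varphi(\vec{d}, b)$. This behaves correctly on every tuple $\vec{d}$ with $M \models \exists y\, \varphi(\vec{d}, y)$, but it imposes nothing on the remaining tuples, so it does not evidently define a \emph{total} function. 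Reconciling totality with $L(T)$-definability is the only real obstacle, and everything else is bookkeeping.

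I would remove this obstacle by passing to the auxiliary formula
\[
  \varphi'(\vec{x}, y) \coloneqq \varphi(\vec{x}, y) \vee \neg \exists y'\, \varphi(\vec{x}, y'),
\]
whose existential closure is logically valid, i.e. $\vdash \exists y\, \varphi'(\vec{x}, y)$: on any tuple where $\varphi$ has no witness the second disjunct holds for every $y$, and the domain is nonempty. Applying Definition~\ref{def:18} to $\varphi'$ yields an $L(T)$ formula $\psi'$ with
\[
  T \vdash \exists y\, \varphi'(\vec{x}, y) \rightarrow \exists! y\,(\psi'(\vec{x}, y) \wedge \varphi'(\vec{x}, y)),
\]
and since the antecedent is valid the implication collapses to $T \vdash \exists! y\,(\psi'(\vec{x}, y) \wedge \varphi'(\vec{x}, y))$.

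Because $M \models T$, for every $\vec{d} \in M^{k}$ there is a unique $b \in M$ with $M \models \psi'(\vec{d}, b) \wedge \varphi'(\vec{d}, b)$, and I define $f(\vec{d})$ to be this $b$. Then $f$ is total, and its graph is defined by the single $L(T)$ formula $\psi'(\vec{x}, y) \wedge \varphi'(\vec{x}, y)$, so $f$ is $L(T)$-definable in $M$ in the sense of Definition~\ref{def:11}. It remains to verify the Skolem condition of Definition~\ref{def:17}: if $M \models \exists y\, \varphi(\vec{d}, y)$, then $\neg \exists y'\, \varphi(\vec{d}, y')$ fails, so $\varphi'(\vec{d}, y)$ and $\varphi(\vec{d}, y)$ are equivalent over $M$; hence $M \models \varphi'(\vec{d}, f(\vec{d}))$ yields $M \models \varphi(\vec{d}, f(\vec{d}))$, as required. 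As $\varphi$ was arbitrary, $M$ has definable Skolem functions, which is what we had to show.
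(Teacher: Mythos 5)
Your proof is correct, but it takes a genuinely different route from the paper's. The paper keeps the original \(\varphi\) and modifies the \emph{defining} formula: with \(\psi\) the formula supplied by Definition~\ref{def:18} for \(\varphi\), it sets \(\psi'(\vec{x},y) \coloneqq (\neg\exists y\,\varphi(\vec{x},y) \wedge y = 0) \vee (\exists y\,\varphi(\vec{x},y) \wedge \psi(\vec{x},y))\), proves \(T \vdash \exists! y\,\psi'(\vec{x},y)\) by a case split on \(\exists y\,\varphi(\vec{x},y)\), and interprets the Skolem function in each model via \(\psi'\), using \(0\) as a default value in the no-witness case. You instead modify the \emph{specification} formula, replacing \(\varphi\) by \(\varphi' = \varphi \vee \neg\exists y'\,\varphi\), whose existential closure is valid, and then apply Definition~\ref{def:18} to \(\varphi'\). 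Your version buys two things. First, it needs no closed term in the language: the paper's default value \(y = 0\) tacitly assumes a constant \(0\) is available (harmless in the paper's arithmetical setting, but the proposition is stated for arbitrary Skolem-free theories), whereas your argument only uses nonemptiness of the domain. Second, unique existence comes directly from Definition~\ref{def:18} applied to \(\varphi'\); the paper's uniqueness step as written infers \(y_{1} = y_{2}\) from \(\psi(\vec{x},y_{1})\) and \(\psi(\vec{x},y_{2})\) alone, which Definition~\ref{def:18} does not literally license (it gives uniqueness only for \(\psi \wedge \varphi\); the implicit fix is to add the conjunct \(\varphi\) to the second disjunct of \(\psi'\), and the same conjunct is needed to verify the Skolem condition of Definition~\ref{def:17}). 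What the paper's construction buys in exchange is a fully explicit defining formula given by a transparent case distinction, making visible that the function returns the canonical Skolem value where a witness exists and the default \(0\) elsewhere.
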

\begin{proof}
  Let \(\varphi(\vec{x}, y)\) be an \(L(T)\) formula.
  Since \(T\) has definable Skolem function, there exists \(\psi(\vec{x}, y)\) such that \(T \vdash \Exists{y}{\varphi(\vec{x}, y)} \rightarrow \ExistsExOne{y}{(\psi(\vec{x}, y) \wedge \varphi(\vec{x}, y))}\).
  Now let \[
    \psi'(\vec{x}, y) \coloneqq (\neg \Exists{y}{\varphi(\vec{x}, y)} \wedge y = 0) \vee (\Exists{y}{\varphi(\vec{x}, y)} \wedge \psi(\vec{x}, y)).
  \]
  Let us now show that \(T \vdash \ExistsExOne{y}{\psi'(\vec{x}, y)}\).
  We work in \(T\), if \(\Exists{y}{\varphi(\vec{x},y)}\), then there is some \(y\) such that \(\psi(\vec{x}, y)\) and \(\varphi(\vec{x}, y)\).
  Hence we have \(\psi'(\vec{x}, y)\).
  If there is no \(y\) such that \(\varphi(\vec{x}, y)\), then we have \(\psi'(\vec{x}, 0)\).
  Assume that \(\psi'(\vec{x}, y_{1})\) and \(\psi'(\vec{x}, y_{2})\).
  If \(\Exists{y}{\varphi(\vec{x}, y)}\), then we have \(\psi(\vec{x}, y_{1})\) and \(\psi(\vec{x}, y_{2})\), thus, \(y_{1} = y_{2}\).
  Otherwise if \(\neg \Exists{y}{\varphi(\vec{x}, y)}\), then we have \(y_{1} = y_{2} = 0\).
\end{proof}
In particular, a theory has definable Skolem functions if it has a definable well-order.
We simply need to define the Skolem functions in terms of the least of the candidate values in each point.
\begin{definition}
  \label{def:19}
  Let \(L\) be a language, and \(\theta(x,y)\) an \(L\) formula in two variables.
  For the sake of legibility we write \(\theta(x,y)\) as \(x \prec_{\theta} y\) and by \(\ForallBounded{x}{\prec_{\theta}}{y}{\psi(x,y)}\) we abbreviate the formula \(\Forall{x}{( x \prec_{\theta} y \rightarrow \psi(x,y))}\).
  The total order axioms \(\mathrm{TO}_{\theta}\) for \(\theta\) are given by the universal closure of the following formulas
  \begin{gather*}
    \label{eq:12}
    x \not\prec_{\theta} x, \\
    x \prec_{\theta} y \wedge y \prec_{\theta} z \rightarrow x \prec_{\theta} z, \\
    x \prec_{\theta} y \vee y \prec_{\theta} x \vee x = y.
  \end{gather*}
  The least number principle \(L\text{-}\mathrm{LNP}_{\theta}\) for \(\theta(x, y)\) consists of the axioms
  \begin{gather*}
    \label{eq:17}
    \Forall{\vec{z}}{\left(\Exists{x}{\psi(x, \vec{z})} \rightarrow \Exists{x}{(\psi(x, \vec{z}) \wedge \ForallBounded{x'}{\prec_{\theta}}{x}{\neg \psi(x', \vec{z})})}\right)},
  \end{gather*}
  where \(\psi(x,\vec{z})\) is an \(L\) formula.
  We define \(L\text{-}\mathrm{WO}_{\theta} \coloneqq \mathrm{TO}_{\theta} + L\text{-}\mathrm{LNP}_{\theta}\).
\end{definition}
\begin{proposition}
  \label{pro:12}
  Let \(T\) be a Skolem-free theory.
  If there exists an \(L(T)\) formula \(\theta(x,y)\) such that \(T \vdash L(T)\text{-}\mathrm{WO}_{\theta}\), then \(T\) has definable Skolem functions.
\end{proposition}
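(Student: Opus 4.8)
The plan is to verify Definition~\ref{def:18} directly by letting the defining formula pick out the \(\prec_{\theta}\)-least witness. For a given \(L(T)\) formula \(\varphi(\vec{x}, y)\) I would set
\[
  \psi(\vec{x}, y) \coloneqq \varphi(\vec{x}, y) \wedge \forall y' \prec_{\theta} y\, \neg \varphi(\vec{x}, y').
\]
Since \(\psi\) is assembled from \(\varphi\), the formula \(\theta\), and logical connectives, it is again an \(L(T)\) formula, as required. It then remains to show that \(T \vdash \exists y \varphi(\vec{x}, y) \rightarrow \exists!y(\psi(\vec{x}, y) \wedge \varphi(\vec{x}, y))\), which I would split into the existence and uniqueness parts of \(\exists!\).

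For existence, I would work in \(T\) and assume \(\exists y \varphi(\vec{x}, y)\). The least number principle \(L(T)\text{-}\mathrm{LNP}_{\theta}\), which is a part of \(L(T)\text{-}\mathrm{WO}_{\theta}\) and hence available by hypothesis, applied to \(\varphi\) yields a \(y\) with \(\varphi(\vec{x}, y)\) and \(\forall y' \prec_{\theta} y\, \neg\varphi(\vec{x}, y')\); this is precisely \(\psi(\vec{x}, y)\). As \(\psi(\vec{x}, y)\) trivially entails \(\varphi(\vec{x}, y)\), I obtain \(\psi(\vec{x}, y) \wedge \varphi(\vec{x}, y)\), establishing existence.

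For uniqueness I would invoke the trichotomy axiom of \(\mathrm{TO}_{\theta}\). Suppose \(\psi(\vec{x}, y_{1})\) and \(\psi(\vec{x}, y_{2})\) both hold. By trichotomy one of \(y_{1} \prec_{\theta} y_{2}\), \(y_{2} \prec_{\theta} y_{1}\), or \(y_{1} = y_{2}\) holds. In the first case the minimality conjunct of \(\psi(\vec{x}, y_{2})\) gives \(\neg\varphi(\vec{x}, y_{1})\), contradicting the \(\varphi(\vec{x}, y_{1})\) conjunct of \(\psi(\vec{x}, y_{1})\); the second case is symmetric. Hence \(y_{1} = y_{2}\), which gives the uniqueness clause.

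I do not anticipate a serious obstacle: existence is an immediate instance of the least number principle, and the strict cases in the uniqueness argument are ruled out directly by minimality together with trichotomy (irreflexivity is not even needed). The only point demanding a little care is to ensure that the bounded quantifier \(\forall y' \prec_{\theta} y'\) appearing in \(\psi\) uses the same relation \(\prec_{\theta}\) as the well-order axioms, so that the instance of \(L(T)\text{-}\mathrm{LNP}_{\theta}\) matches the shape of \(\psi\) exactly; this is guaranteed by construction.
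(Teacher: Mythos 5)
Your proof is correct and matches the paper's own argument essentially verbatim: the same least-witness formula \(\psi(\vec{x},y) = \varphi(\vec{x},y) \wedge \forall y' \prec_{\theta} y\, \neg\varphi(\vec{x},y')\), existence via the least number principle, and uniqueness via trichotomy plus minimality. (Only a cosmetic slip in your final remark: the bounded quantifier should read \(\forall y' \prec_{\theta} y\), not \(\forall y' \prec_{\theta} y'\).)
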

\begin{proof}
  Let \(\varphi(\vec{x}, y)\) be an \(L(T)\) formula.
  We set \(\psi(\vec{x}, y) = \varphi(x,y) \wedge \ForallBounded{y'}{\prec_{\theta}}{y}{\neg \varphi(\vec{x}, y')}\).
  Now work in \(T\) and assume that \(\Exists{y}{\varphi(\vec{x}, y)}\), then by the least number principle there exists \(y\) such that \(\varphi(\vec{x}, y)\) and moreover \(\ForallBounded{y'}{\prec_{\theta}}{y}{\neg \varphi(\vec{x}, y')}\).
  It remains to show that this \(y'\) is unique.
  Let \(u\) be an element with \(\varphi(\vec{x}, y)\) and \(\ForallBounded{u'}{\prec_{\theta}}{u}{\neg \varphi(u, y)}\).
  If \(u \prec_{\theta} y\), then we obtain \(\neg \varphi(\vec{x}, u)\).
  Analogously we obtain \(\neg \varphi(\vec{x}, y)\) if \(y \prec_{\theta} u\).
  Hence \(u = y\).
\end{proof}
These results are quite far-reaching.
For example, for every sound arithmetic theory \(T\) containing
 the symbol \(+/2\) with the usual primitive recursive definition of \(+\) we have
\[
  T + \IND{L(T)} \vdash L(T)\text{-}\mathrm{WO}_{\theta},
\]
where \(\theta \coloneqq \Exists{z}{x + z = y}\).
Therefore, extending the full induction principle by all the Skolem symbols based on such a theory
 results in a system that proves the same \(L(T)\) formulas as the Skolem-free system.

So far we have considered the effects of extending the full induction schema by all Skolem symbols.
We have concluded that not only is this extension always sound but it is also conservative over the Skolem-free system in a setting where Skolem functions are definable in all models and in particular if the theory provides a well-order.
We have left open the case where there are models in which a Skolem function is not definable.
\section{Restricted induction and Skolemization}
\label{sec:restricted_induction_and_skolemization}
In the previous section we have considered some high-level questions about the soundness and conservativity of Skolemization in saturation theorem proving with an unrestricted induction rule.
In this section we will focus on the role of Skolem symbols in the more practical setting corresponding to the induction rule \(\INDRPF{\Gamma}\) given in Definition~\ref{def:8}, where \(\Gamma\) is a set of formulas.
We start by providing in Section~\ref{sec:restricted_induction_and_skolemization:logical_theory} a representation as a logical theory for sound and refutationally complete saturation systems extended by the induction rule \(\INDRPF{\Gamma}\).
After that we will make use of that characterization in order to clarify the role of the Skolem symbols in saturation systems extended by the rule \(\INDRPF{\Gamma}\) mostly under the assumption that \(\Gamma\) is Skolem-free.
As already mentioned earlier, the restriction to a Skolem-free \(\Gamma\) deviates from practical systems in which \(\Gamma\) may contain initial Skolem symbols but not induction Skolem symbols.
Nevertheless, studying the effect of restricting the occurrences of all Skolem symbols in the induction schema is an interesting theoretical question and allows us to better understand the overall role of Skolem symbols.
\subsection{Representation as logical theory}
\label{sec:restricted_induction_and_skolemization:logical_theory}
We will now provide a preliminary representation as a logical theory for sound and refutationally complete saturation systems extended by the induction rule \(\INDRPF{\Gamma}\).
We start by introducing some additional notions that will be used throughout this section.

So far we have considered the traditional induction schema with induction parameters.
In the following we introduce a notation for induction without induction parameters.
Parameter-free induction schemata have been investigated in mathematical logic \cite{adamowicz1987,kaye1988,beklemishev1997b,cordon2011,jerabek2020}, hence, we adopt a similar notation.
\begin{definition}
  \label{def:parameter_free_induction}
  Let \(\Gamma\) be a set of formulas, then the parameter-free induction schema for \(\Gamma\) formulas \(\INDParameterFree{\Gamma}\) is given by \(\INDParameterFree{\Gamma} \coloneqq \{ I_{x} \gamma \mid \gamma(x) \in \Gamma \}\).
\end{definition}
The grounding operator given in the following definition allows us to obtain all instances of a set of formulas obtained by replacing some of the variables by ground terms.
\begin{definition}
  \label{def:23}
  Let \(\Gamma\) be a set of formulas and let \(L\) be a language.
  Then we define
  \[
    \Gamma \downarrow L \coloneqq \left\{ \gamma(\vec{x}, t_{1}, \dots, t_{n}) \ \mathrel{\bigg|} \
      \begin{split}
         & \gamma(\vec{x},z_{1}, \dots, z_{n}) \in \Gamma,
      \\ & \text{\(t_{1}, \dots, t_{n}\) are ground \(L\) terms}
      \end{split}
      \right\}.
  \]
\end{definition}
We can now introduce an operator corresponding to the rule \(\INDRPF{\Gamma}\).
\begin{definition}
  \label{def:15}
  Let \(T\) be a theory and \(\Gamma\) be a set of formulas.
  \begin{gather*}
    \label{eq:8}
    \GSI{\Gamma}{}{T} \coloneqq T + \skE(\INDParameterFree{(\Gamma \downarrow T)}).
  \end{gather*}
  We define \(\GSI{\Gamma}{i}{T}\) as the \(i\)-fold iteration of the \(\GSI{\Gamma}{}{\cdot}\) operation.
  Finally, we define \(\GSI{\Gamma}{\omega}{T} \coloneqq \bigcup_{i < \omega} \GSI{\Gamma}{i}{T}\).
\end{definition}
It is straightforward to see that \(\GSI{\Gamma}{\omega}{\cdot}\) characterizes a sound and refutationally complete saturation-based proof system extended by the induction rule \(\INDRPF{\Gamma}\).
\begin{proposition}
  \label{pro:5}
  Let \(\mathcal{S}\) be a sound and refutationally complete saturation-based proof system and \(T\) be a theory.
  Then \(\mathcal{S} + \INDRPF{\Gamma}\) refutes \(\cnf(\skE(T))\) if and only if \(\GSI{\Gamma}{\omega}{\skE(T)}\) is inconsistent.
\end{proposition}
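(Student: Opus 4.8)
The plan is to establish the biconditional by treating its two implications separately, since each one is exactly the content of a lemma already proved. The hypotheses of the proposition---that $\mathcal{S}$ is both sound and refutationally complete---are precisely the union of the hypotheses required by Lemmas~\ref{lem:14} and~\ref{lem:15}: the former assumes only soundness, the latter only refutational completeness. So the entire argument amounts to citing these two lemmas and observing that their assumptions are jointly met.

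Concretely, for the left-to-right direction I would assume that $\mathcal{S} + \INDRPF{\Gamma}$ refutes $\cnf(\skE(T))$ and apply Lemma~\ref{lem:14}, whose standing hypothesis is the soundness of $\mathcal{S}$; this immediately delivers the inconsistency of $\GSI{\Gamma}{\omega}{\skE(T)}$. For the right-to-left direction I would assume that $\GSI{\Gamma}{\omega}{\skE(T)}$ is inconsistent and apply Lemma~\ref{lem:15}, whose standing hypothesis is the refutational completeness of $\mathcal{S}$; this yields the desired refutation of $\cnf(\skE(T))$. Nothing beyond these two invocations is needed.

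There is no genuine obstacle here: the proposition is a bookkeeping statement whose mathematical substance resides entirely in the two preceding arguments, and the two lemmas dovetail so that the inconsistency produced by one direction is exactly the inconsistency consumed by the other. The only point I would verify is that the intermediate object $\GSI{\Gamma}{\omega}{\skE(T)}$ is literally the same in both lemmas---that is, that both are phrased over the same Skolemized theory $\skE(T)$ and the same formula set $\Gamma$---so that the two implications compose into a clean equivalence. Given the matching definitions, this is immediate.
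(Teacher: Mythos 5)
Your proposal is correct and matches the paper exactly: the paper explicitly introduces Proposition~\ref{pro:5} as a summary of Lemmas~\ref{lem:14} and~\ref{lem:15}, using the former (soundness) for the left-to-right direction and the latter (refutational completeness) for the right-to-left direction, just as you do. Your added remark about checking that both lemmas speak of the same theory $\GSI{\Gamma}{\omega}{\skE(T)}$ is a sensible sanity check but not a substantive difference.
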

\begin{proof}
  Analogous to the proof of Proposition~\ref{pro:6}.
\end{proof}
In Section~\ref{sec:induction_parameters_and_skolem_symbols} we will have a closer look at the role of the Skolem symbols in such saturation systems.

\subsection{Induction parameters and Skolem symbols}
\label{sec:induction_parameters_and_skolem_symbols}
The induction rule \(\INDRPF{\Gamma}\) only generates parameter-free induction axioms, but on the other hand the generated induction axioms may contain Skolem symbols whose role is not yet clear at this point.
Thus, it appears reasonable to begin by comparing sound and refutationally complete saturation systems extended by the rule \(\INDRPF{\Gamma}\) with the induction schema \(\INDParameterFree{\Gamma}\).
In the setting of linear arithmetic with \(\Gamma \coloneqq \Open(\mathcal{T})\) and \(\theta(x,y) \coloneqq  y + x = x \rightarrow y = 0\) we readily obtain an example where both systems differ in strength.
\begin{lemma}
  \label{lem:27}
  Let \(\mathcal{S}\) be a sound and refutationally complete saturation system, then \(\mathcal{S} + \INDRPF{\Open(\mathcal{T})}\) refutes the clause set \(\cnf(\skE(\mathcal{T} + \neg \Forall{x}{\theta(x,x)}))\).
\end{lemma}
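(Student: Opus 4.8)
The plan is to invoke the characterization of Proposition~\ref{pro:5}: it suffices to show that the theory $\GSI{\Open(\mathcal{T})}{\omega}{\skE(\mathcal{T} + \neg \forall x \theta(x,x))}$ is inconsistent. First I would compute the Skolem form of the input. Since $\neg \forall x\, \theta(x,x)$ is $\exists x\,(x + x = x \wedge x \neq 0)$ and $\mathcal{T}$ is universal (so $\skE(\mathcal{T}) = \mathcal{T}$), the map $\skE$ introduces a single nullary Skolem constant $c \coloneqq \mathfrak{s}_{\exists x(x+x=x\wedge x\neq 0)}$ and yields
\[
  \skE(\mathcal{T} + \neg \forall x\, \theta(x,x)) = \mathcal{T} \cup \{\, c + c = c,\ c \neq 0 \,\}.
\]

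The heart of the argument is to exhibit a single induction instance that, together with $\mathcal{T}$ and these two facts, is already inconsistent. I would take
\[
  \varphi(x) \coloneqq c + x = x \rightarrow c = 0,
\]
which is exactly $\theta$ with its parameter slot grounded to the Skolem constant $c$. Since $c$ occurs in $\skE(\mathcal{T} + \neg \forall x \theta(x,x))$, we have $\varphi \in (\Open(\mathcal{T}) \downarrow \skE(\mathcal{T} + \neg \forall x \theta(x,x)))^{-}$, so that $\skE(I_{x}\varphi)$ lies in $\GSI{\Open(\mathcal{T})}{1}{\skE(\mathcal{T} + \neg \forall x \theta(x,x))}$. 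By Proposition~\ref{pro:3} this Skolemized axiom entails $I_{x}\varphi$, hence it is enough to refute $\mathcal{T} + \{\, c+c=c,\ c\neq 0 \,\} + I_{x}\varphi$.

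Then I would verify the two premises of $I_{x}\varphi$. For the base case $\varphi(0) = (c + 0 = 0 \rightarrow c = 0)$, axiom~\eqref{ax:A:0} gives $c + 0 = c$, so the antecedent collapses to $c = 0$ and $\varphi(0)$ is a tautology. For the step, assume $\varphi(x)$ together with $c + s(x) = s(x)$; axiom~\eqref{ax:A:s} rewrites the left-hand side to $s(c + x)$, so $s(c+x) = s(x)$, whence $c + x = x$ by Lemma~\ref{lem:33}, and the induction hypothesis yields $c = 0$. Thus $\forall x\, \varphi(x)$ follows; instantiating at $x = c$ and using $c + c = c$ gives $c = 0$, contradicting $c \neq 0$. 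Consequently the theory is inconsistent, and Proposition~\ref{pro:5} delivers the desired $\mathcal{S} + \INDRPF{\Open(\mathcal{T})}$ refutation.

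The main obstacle is choosing the induction formula correctly. Because addition in $\mathcal{T}$ recurses on its right argument (axioms~\eqref{ax:A:0} and~\eqref{ax:A:s}), the induction variable must occupy that right argument, which is precisely what makes the step reducible to the hypothesis via $c + s(x) = s(c+x)$ and the injectivity of $s$. Naive variants produce an unusable premise: placing $c$ on the right, as in $x + c = c \rightarrow x = 0$, forces a left-successor law that is itself unavailable without further induction, and several other candidates yield a false or vacuous base case. The conceptual point underlying the example, and what motivates the difference in strength alluded to just before the lemma, is that grounding the parameter of $\theta$ to the Skolem constant $c$ lets a parameter-free open induction simulate an induction carrying a parameter.
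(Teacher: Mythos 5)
Your proposal is correct and follows essentially the same route as the paper: both reduce to inconsistency of \(\GSI{\Open(\mathcal{T})}{1}{\skE(\mathcal{T} + \neg \forall x \theta(x,x))}\) via Proposition~\ref{pro:5}, perform induction on \(x\) in \(\theta(x,c) = (c + x = x \rightarrow c = 0)\) with the parameter slot grounded to the Skolem constant \(c\), and close the base case with \eqref{ax:A:0}, the step with \eqref{ax:A:s} and Lemma~\ref{lem:33}, and the contradiction by instantiating at \(x = c\). The only (harmless) deviation is that you name the Skolem constant by first rewriting \(\neg \forall x \theta(x,x)\) as an existential sentence, whereas under the paper's canonical Skolemization (Definition~\ref{def:4}) the constant is \(\mathfrak{s}_{\forall x \theta(x,x)}\) and \(\skE(\neg\forall x\theta(x,x)) = \neg\theta(c,c)\) directly; the resulting clause set is the same.
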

\begin{proof}
  By Proposition~\ref{pro:5} it suffices to show the inconsistency of the theory
  \[
    \GSI{\Open}{1}{\skE(\mathcal{T} + \neg \Forall{x}{\theta(x,x)})}.
  \]
  Let \(c \coloneqq \mathfrak{s}_{\Forall{x}{\theta(x,x)}}\), then \(\GSI{\Open}{1}{\skE(\mathcal{T} + \neg \Forall{x}{\theta})} \vdash I_{x}\theta(x,c)\).
  Hence we now work in the theory \(\GSI{\Open}{1}{\skE(\mathcal{T} + \neg \Forall{x}{\theta(x,x)})}\) and proceed by induction on \(x\) in the formula \(\theta(x,c)\).
  For the base case it suffices to see that \(c = c + 0 = 0\) by \(\eqref{ax:A:0}\).
  For the induction step we assume that \(c + x = x \rightarrow c = 0\) and \(c + s(x) = s(x)\).
  By \(\eqref{ax:A:s}\) we obtain \(s( c + x ) = s(x)\) and therefore we obtain \(c + x = x\).
  Hence \(c = 0\) by the assumptions.
  Therefore we now obtain \(\theta(c,c)\) and \(\neg \theta(c,c)\), that is, \(\bot\).
\end{proof}
On the other hand we also have the following.
\begin{lemma}
  \label{lem:13}
  \(\mathcal{T} + \INDParameterFree{\Open(\mathcal{T})} \not \vdash \theta(x,x)\).
\end{lemma}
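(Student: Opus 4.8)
The lemma asserts an unprovability result: the theory $\mathcal{T}$ together with induction for open formulas with at most one free variable does not prove $\theta(x,x)$, where $\theta(x,y) \coloneqq y + x = x \rightarrow y = 0$. Contrasted with the previous lemma, this establishes that the saturation system with the induction rule $\INDRPF{\Open(\mathcal{T})}$ is genuinely stronger than the parameter-free open induction schema $\IND{\Open^{-}(\mathcal{T})}$, because the Skolem symbol $c$ in Lemma~\ref{lem:27} acts as an induction parameter that $\IND{\Open^{-}(\mathcal{T})}$ cannot simulate.

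Let me work through how I would prove this.

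**The plan.** The natural route to an unprovability result is to construct a model $M$ of $\mathcal{T} + \IND{\Open^{-}(\mathcal{T})}$ in which $\theta(x,x)$ fails for some element, i.e. an element $a$ with $a + a = a$ but $a \neq 0$. By soundness (completeness of first-order logic), exhibiting one such model suffices. So the whole burden is a model construction plus a verification that open induction in one variable survives.

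First I would choose the domain. Since $\theta(x,x)$ says "$x + x = x \to x = 0$", I need a nonzero idempotent for $+$. The standard device is to take $M$ to consist of the standard naturals $\mathbb{N}$ together with one or more extra "infinite" elements that absorb addition. A clean choice is $M = \mathbb{N} \cup \{\infty\}$, interpreting $0$, $s$, $p$ as usual on $\mathbb{N}$, setting $s(\infty) = \infty$, $p(\infty) = \infty$, and defining $+$ so that $\infty$ is absorbing: $x + \infty = \infty$ and $\infty + y = \infty$ for all $x,y$, with $n + m$ standard on $\mathbb{N}$. Then $\infty + \infty = \infty$ while $\infty \neq 0$, so $\theta(\infty,\infty)$ fails. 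I would first verify that this $M$ satisfies axioms \eqref{ax:D}--\eqref{ax:A:s}: \eqref{ax:D} holds since $s$ never outputs $0$; \eqref{ax:P:0},\eqref{ax:P:s} are routine including the $\infty$ case; \eqref{ax:A:0} needs $\infty + 0 = \infty$, which the definition must ensure; \eqref{ax:A:s} needs checking in the cases where arguments are $\infty$.

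**The main obstacle.** The real work is verifying $M \models \IND{\Open^{-}(\mathcal{T})}$, i.e. that every open formula $\varphi(x)$ with a single free variable satisfies induction on $x$. This is where having exactly one free variable is essential: a formula $\varphi(x)$ in the language $\{0,s,p,+\}$ with no parameters defines a subset of $M$, and I must show that if $\varphi(0)$ holds and $\varphi(x) \to \varphi(s(x))$ holds for all $x$, then $\varphi$ holds everywhere. The key structural fact I would establish is that any open $\{0,s,p,+\}$-term $t(x)$ in one variable is eventually constant or eventually behaves linearly, and consequently any quantifier-free $\varphi(x)$ is, on the standard part $\mathbb{N}$, eventually periodic or eventually constant in truth value, and its value at $\infty$ is determined separately. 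The plan is to argue that if induction held up through all of $\mathbb{N}$ (true atoms stabilize), then the value at $\infty$ is forced to agree, so $\infty$ cannot be a counterexample that escapes induction. The delicate point is controlling how terms evaluate at $\infty$: since $+$ is absorbing, any term containing $x$ essentially additively collapses to $\infty$ when $x = \infty$, so atomic formulas at $\infty$ reduce to comparisons among $\{0, \infty, \text{standard constants}\}$, which I can analyze by cases. I expect this term-analysis — showing the induction step forces the correct truth value at $\infty$ from the behavior on $\mathbb{N}$ — to be the technical heart of the argument, and the restriction to a single free variable is precisely what keeps the definable sets simple enough for it to go through.
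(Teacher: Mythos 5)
There is a genuine gap: the model you propose does not satisfy \(\IND{\Open^{-}(\mathcal{T})}\), so the construction collapses before the verification stage. Concretely, consider the open formula \(\varphi(x) \coloneqq s(x) \neq x\), which has a single free variable and no parameters. In your structure \(M = \mathbb{N} \cup \{\infty\}\) the base case \(\varphi(0)\) holds by \eqref{ax:D}, and the induction step \(\varphi(x) \rightarrow \varphi(s(x))\) holds as well (taking the contrapositive: \(s(s(x)) = s(x)\) implies \(s(x) = x\) by applying \(p\) and \eqref{ax:P:s}), yet \(\varphi(\infty)\) fails because you set \(s(\infty) = \infty\). The same problem occurs with \(x + \numeral{1} \neq x\), which fails at \(\infty\) precisely because you made \(+\) absorbing. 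Both formulas are provable in \(\mathcal{T} + \IND{\Open^{-}(\mathcal{T})}\), so \emph{no} model of that theory can contain a fixed point of \(s\) or of \(x \mapsto x + \numeral{1}\); the one-point compactification with an absorbing element can never work. Your intended term analysis (``any term containing \(x\) collapses to \(\infty\)'') is exactly the feature that destroys induction, not the feature that saves it.

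The repair forces a different shape of nonstandard part, and this is what the paper's proof does: since \(s\) must be injective without fixed points and every nonzero element must have a predecessor (these follow from open parameter-free induction), the nonstandard elements must form \(\mathbb{Z}\)-chains. The paper takes the domain \(\{(b,n) \in \{0,1\} \times \mathbb{Z} : b = 0 \Rightarrow n \in \mathbb{N}\}\), i.e.\ a standard copy of \(\mathbb{N}\) plus one \(\mathbb{Z}\)-chain, with \((b_{1},n_{1}) + (b_{2},n_{2}) = (\max\{b_{1},b_{2}\}, n_{1} + n_{2})\): the ``nonstandard flag'' is absorbing but the integer coordinate adds honestly. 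The counterexample to \(\theta(x,x)\) is then \((1,0)\), since \((1,0) + (1,0) = (1,0) \neq 0^{\mathcal{M}}\). Verifying induction in this model is genuinely delicate and splits into two steps you would still have to supply: first a proof-theoretic reduction eliminating \(p\) from induction formulas (by shifting the induction base via \(\varphi(s^{N}(x))\)), and second an analysis of \(p\)-free atoms in one variable as equations between linear functions over \(\mathbb{Z}\), which either hold at most once or hold identically, so that the truth value of an open formula on the \(\mathbb{Z}\)-chain eventually mirrors its truth value on \(\mathbb{N}\) and an inductive formula cannot fail on the chain. Your instinct that a one-variable term/atom analysis is the technical heart is right, but it must be carried out over a \(\mathbb{Z}\)-chain with genuine addition, not over an absorbing point.
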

The proof of Lemma~\ref{lem:13} can be found in Appendix~\ref{sec:appendix:b} and consists of the elimination of the symbol \(p\) from induction formulas followed by the construction of a model \(\mathcal{M}\).
The domain of \(\mathcal{M}\) consists of elements of the form \((b,i) \in \{ 0, 1\} \times \mathbb{Z}\) such that \(b = 0\) implies \(i \in \mathbb{N}\).
Furthermore, the symbol \(0\) is interpreted as the element \((0,0)\) and \(+\) is interpreted as the operation \((b_{1},n_{1}) +^{\mathcal{M}} (b_{2},n_{2}) = (\max\{b_{1},b_{2}\}, n_{1} + n_{2})\).
Hence, \(\mathcal{M} \not \models \theta((1,0),(1,0))\).
\begin{remark}
  \label{rem:1}
  We clearly have \(\mathcal{T} + \IND{\Open(\mathcal{T})} \vdash \theta(x,x)\) by proceeding by induction on \(x\) in the formula \(\theta(x,y)\).
  Hence Lemma~\ref{lem:13} is highly interesting for \ac{AITP} because it provides us with a simple formula that requires induction on a syntactically more complex formula.
\end{remark}
The proof of Lemma~\ref{lem:27} is reminiscent of the obvious proof of \(\theta(x,x)\) in the theory \(\mathcal{T} + \IND{\Open(\mathcal{T})}\).
Thus the proof suggest that the occurrences of Skolem symbols in ground terms of the induction formulas provide some of the strength of induction parameters.
In the following we will confirm this intuition (see Theorem~\ref{thm:1}).

We start by showing that the Skolem symbols appearing in the ground terms of the induction axioms of \(\GSI{\Gamma}{\omega}{\skE(T)}\) are not more powerful than induction parameters.
This is relatively straightforward because ground terms can be abstracted by induction parameters.
In particular, the grounding operation given in Definition~\ref{def:23} is absorbed by parameterized induction.
\begin{lemma}
  \label{lem:41}
  Let \(\Gamma\) be a set of formulas and \(L\) a language, then
  \[
    \IND{\Gamma} \vdash \IND{(\Gamma \downarrow L)}.
  \]
\end{lemma}
\begin{proof}
  Observe that \(\vdash I_{x}\varphi(x,\vec{y},\vec{z}) \rightarrow I_{x}\varphi(x, \vec{y}, \vec{t})\).
\end{proof}
We have announced that this section deals mainly with the case where the set of formulas \(\Gamma\) is Skolem-free.
This corresponds to a saturation system that also restricts the occurrences of the initial Skolem symbols.
In practical systems this is usually not the case, because the restriction mainly applies to induction Skolem symbols.
We briefly address this more general case in the following lemma.
\begin{lemma}
  \label{lem:20}
  Let \(L \supseteq L_{0}\) be a first-order language, \(T\) an \(L\) theory, and \(\Gamma\) a set of \(L\) formulas, then
  \[
    \GSI{\Gamma}{\omega}{T} \sqsubseteq_{L} \SASchema{L} + T + \IND{\Gamma}.
  \]
\end{lemma}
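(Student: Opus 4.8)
The plan is to prove the stronger statement \(\SASchema{L} + T + \IND{\Gamma} \vdash \GSI{\Gamma}{\omega}{T}\); since full derivability trivially entails that the right-hand theory proves every \(L\)-consequence of the left-hand one, the claimed \(L\)-conservativity follows at once. Thus it suffices to derive each axiom of \(\GSI{\Gamma}{\omega}{T}\) inside \(\SASchema{L} + T + \IND{\Gamma}\). Unfolding Definition~\ref{def:15}, every such axiom is either a member of \(T\) (trivially available) or has the form \(\skE(I_{x}\gamma(x, \vec{t}))\), where \(\gamma(x, \vec{z}) \in \Gamma\) and \(\vec{t}\) is a vector of ground terms of the language of some finite stage \(\GSI{\Gamma}{i}{T}\), the grounded formula having at most one free variable.

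For such an axiom I would first abstract the ground terms by parameters: the grounded formula \(\gamma(x, \vec{t})\) belongs to \(\Gamma \downarrow L'\) for the relevant grounding language \(L'\), so Lemma~\ref{lem:41} gives \(\IND{\Gamma} \vdash \IND{(\Gamma \downarrow L')}\) and hence \(\IND{\Gamma} \vdash I_{x}\gamma(x, \vec{t})\). It then remains to pass from the unskolemized induction axiom to its existential Skolem form. For this I invoke Proposition~\ref{pro:16}, which yields \(\SASchema{L} \vdash I_{x}\gamma(x, \vec{t}) \leftrightarrow \skE(I_{x}\gamma(x, \vec{t}))\). Combining the two, \(\SASchema{L} + \IND{\Gamma} \vdash \skE(I_{x}\gamma(x, \vec{t}))\), and together with \(T\) this exhausts all axioms of \(\GSI{\Gamma}{\omega}{T}\), completing the argument.

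The one point that genuinely requires care---and the main (if mild) obstacle---is the applicability of Proposition~\ref{pro:16}: it only bridges \(\varphi\) and \(\skE(\varphi)\) for \(\sk^{\omega}(L)\) formulas, and \(\SASchema{L}\) only supplies Skolem axioms for the symbols \(\mathfrak{s}_{Qx\psi} \in \sk^{\omega}(L)\). I therefore need that every ground term in \(\vec{t}\), and every Skolem symbol introduced when forming \(\skE(I_{x}\gamma(x, \vec{t}))\), lies in \(\sk^{\omega}(L)\); equivalently, \(L(\GSI{\Gamma}{\omega}{T}) \subseteq \sk^{\omega}(L)\). This I would establish by an induction on the iteration stage analogous to Lemma~\ref{lem:5}: since \(\Gamma\) consists of \(L\) formulas, each application of \(\GSI{\Gamma}{}{\cdot}\) only Skolemizes formulas built over the current language, so every symbol it adds is a Skolem symbol over a language already contained in \(\sk^{\omega}(L)\), whence the union stays inside \(\sk^{\omega}(L)\). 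Once this containment is in hand, \(I_{x}\gamma(x, \vec{t})\) is indeed an \(\sk^{\omega}(L)\) formula, Proposition~\ref{pro:16} applies, and the gap is closed.
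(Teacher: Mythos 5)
Your proof is correct and uses the same two key ingredients as the paper's own argument---Lemma~\ref{lem:41} to absorb the ground terms \(\vec{t}\) into induction parameters, and Proposition~\ref{pro:16} to pass between \(I_{x}\gamma(x,\vec{t})\) and \(\skE(I_{x}\gamma(x,\vec{t}))\)---but it organizes them differently. The paper proves, by induction on \(n\), the stage-wise conservativity claim \(\SASchema{L} + \GSI{\Gamma}{n}{T} + \IND{\Gamma} \sqsubseteq_{L} \SASchema{L} + T + \IND{\Gamma}\) and then appeals to the compactness theorem to pass from the finite stages to \(\GSI{\Gamma}{\omega}{T}\); you instead prove the stronger statement \(\SASchema{L} + T + \IND{\Gamma} \vdash \GSI{\Gamma}{\omega}{T}\) axiom by axiom, which makes both the compactness step and the conservativity bookkeeping unnecessary, since derivability of a theory is axiom-wise by Definition~\ref{def:25}, and the \(L\)-conservativity then drops out trivially. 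A further merit of your write-up is that it makes explicit what the paper uses tacitly: the step \(\equiv_{\text{Prop.~\ref{pro:16}}}\) in the paper's chain is only licensed once one knows \(L(\GSI{\Gamma}{n}{T}) \subseteq \sk^{\omega}(L)\), because Proposition~\ref{pro:16} applies to \(\sk^{\omega}(L)\) formulas and \(\SASchema{L}\) contains Skolem axioms only for symbols of \(\sk^{\omega}(L)\); your stage induction (the analogue of Lemma~\ref{lem:5}) supplies precisely this containment. In the end the two proofs have the same mathematical content---the residual induction on stages in your version lives entirely in the language bookkeeping---but yours trades the compactness argument for a strictly stronger and more directly usable intermediate claim, while the paper's stage-wise chain mirrors more closely how the saturation system actually builds up \(\GSI{\Gamma}{\omega}{T}\).
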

\begin{proof}
  By the compactness theorem it clearly suffices to show that \(\GSI{\Gamma}{n}{T} \sqsubseteq_{L} \SASchema{L} + T + \IND{\Gamma}\) for all \(n \in \mathbb{N}\).
  We proceed by induction on \(n\) and show the slightly stronger claim that \(\SASchema{L} + \GSI{\Gamma}{n}{T} + \IND{\Gamma} \sqsubseteq_{L} \SASchema{L} + T + \IND{\Gamma}\), for all \(n \in \mathbb{N}\).
  The base case is trivial since \(\GSI{\Gamma}{0}{T} = T\).
  For the induction step we have
  \begin{align*}
    \SASchema{L} & + \GSI{\Gamma}{n + 1}{T} \\
    & =_{\text{Def.~\ref{def:15}}} \SASchema{L} + \GSI{\Gamma}{n}{T} + \skE(\IND{\Gamma \downarrow \GSI{\Gamma}{n}{T}}) \\
    & \equiv_{\text{Prop.~\ref{pro:16}}} \SASchema{L} + \GSI{\Gamma}{n}{T} + \IND{\Gamma \downarrow \GSI{\Gamma}{n}{T}} \\
    & \equiv_{\text{Lem.~\ref{lem:41}}} \SASchema{L} + \GSI{\Gamma}{n}{T} + \IND{\Gamma} \\
    & \sqsubseteq_{\text{ind.\ hyp.}} \SASchema{L} + T + \IND{\Gamma}. \qedhere
  \end{align*}
\end{proof}
We can now apply the above lemma to the case that is relevant for us in order to show that allowing occurrences of Skolem symbols in ground terms of induction formulas is not stronger than induction parameters.
\begin{proposition}
  \label{pro:1}
  Let \(L\) be a Skolem-free first-order language, \(T\) an \(L\) theory, and \(\Gamma\) a set of \(L\) formulas, then
  \[
    \GSI{\Gamma}{\omega}{\skE(T)} \sqsubseteq_{L} T + \IND{\Gamma}.
  \]
\end{proposition}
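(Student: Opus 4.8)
The plan is to reduce the claim to Lemma~\ref{lem:20} and then to strip away, one layer at a time, both the Skolemization of the base theory and the Skolem axioms, landing finally on the Skolem-free theory $T + \IND{\Gamma}$ by means of Proposition~\ref{pro:9}. The only genuine content beyond bookkeeping is making sure that each conservativity step is legitimate over the right language.

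First I would instantiate Lemma~\ref{lem:20} with base theory $\skE(T)$, language $L(\skE(T))$, and formula set $\Gamma$; since $L \subseteq L(\skE(T))$, the set $\Gamma$ of $L$ formulas is in particular a set of $L(\skE(T))$ formulas, so the hypotheses are met. This gives
\[
  \GSI{\Gamma}{\omega}{\skE(T)} \sqsubseteq_{L(\skE(T))} \SASchema{L(\skE(T))} + \skE(T) + \IND{\Gamma}.
\]
Because $L \subseteq L(\skE(T))$, every $L$ consequence is an $L(\skE(T))$ consequence, so the same conservativity holds over $L$. Next I would note that the two Skolem-axiom schemata coincide: from $L \subseteq L(\skE(T)) \subseteq \sk^{\omega}(L)$ we get $\sk^{\omega}(L(\skE(T))) = \sk^{\omega}(L)$, and since $\SASchema{\cdot}$ depends only on this closure, $\SASchema{L(\skE(T))} = \SASchema{L}$. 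Applying Proposition~\ref{pro:16} to each axiom of $T$ yields $\SASchema{L} \vdash T \leftrightarrow \skE(T)$, whence I may trade $\skE(T)$ for $T$:
\[
  \SASchema{L} + \skE(T) + \IND{\Gamma} \equiv \SASchema{L} + T + \IND{\Gamma}.
\]
Finally $T + \IND{\Gamma}$ is an $L$ theory and $L$ is Skolem-free, so Proposition~\ref{pro:9} (with $T + \IND{\Gamma}$ in the role of the Skolem-free theory) gives $\SASchema{L} + T + \IND{\Gamma} \equiv_{L} T + \IND{\Gamma}$. Chaining these conservativities, using transitivity of $\sqsubseteq_{L}$, delivers the claim.

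The step I expect to require the most care is the interface with Proposition~\ref{pro:9}. That proposition demands a Skolem-free language, whereas the theory produced by Lemma~\ref{lem:20} lives over $L(\skE(T))$, which is \emph{not} Skolem-free. The resolution is exactly the observation that $\SASchema{L(\skE(T))} = \SASchema{L}$ together with the elimination of $\skE(T)$ via the Skolem axioms: only after these two moves does the remaining theory become an $L$ theory over the Skolem-free language $L$, so that Proposition~\ref{pro:9} is applicable. Everything else is careful tracking of the languages involved and the transitivity of $\sqsubseteq_{L}$, and I would only claim the single direction $\sqsubseteq_{L}$, since the reverse fails in general (the grounding operator instantiates induction only at ground terms, which is genuinely weaker than parametrized induction).
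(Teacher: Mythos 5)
Your proof is correct and follows essentially the same route as the paper's: apply Lemma~\ref{lem:20} to \(\skE(T)\), trade \(\skE(T)\) for \(T\) via Proposition~\ref{pro:16}, eliminate the Skolem axioms via Proposition~\ref{pro:9}, and chain the conservativities. The only difference is that you make explicit the language bookkeeping (instantiating Lemma~\ref{lem:20} over \(L(\skE(T))\) and observing \(\SASchema{L(\skE(T))} = \SASchema{L}\)) that the paper's proof leaves implicit.
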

\begin{proof}
  Let \(L' =  L_{0} \cup L \cup L(\skE(T))\), then by Lemma \ref{lem:20} we have \(\GSI{\Gamma}{\omega}{\skE(T)} \sqsubseteq_{L'} \SASchema{L'} + \skE(T) + \IND{\Gamma}\).
  By Proposition \ref{pro:16} we have \(\SASchema{L'} + \skE(T) + \IND{\Gamma} \equiv \SASchema{L'} + T + \IND{\Gamma}\).
  Since \(T + \IND{\Gamma}\) is Skolem-free,  we have by Proposition \ref{pro:9} \(\SASchema{L'} + T + \IND{\Gamma} \sqsubseteq_{L} T + \IND{\Gamma}\).
  Hence \(\GSI{\Gamma}{\omega}{\skE(T)} \sqsubseteq_{L} T + \IND{\Gamma}\).
\end{proof}
In particular this shows that \(\GSI{\Gamma}{\omega}{\skE(T)}\) is not refutationally stronger than the theory \(T + \IND{\Gamma}\).
\begin{corollary}
  \label{cor:1}
  Let \(L\) be a Skolem-free first-order language, \(T\) an \(L\) theory, and \(\Gamma\) a set of \(L\) formulas.
  If \(\GSI{\Gamma}{\omega}{\skE(T)}\) is inconsistent, then \(T + \IND{\Gamma}\) is inconsistent.
\end{corollary}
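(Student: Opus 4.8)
The plan is to obtain this as an immediate consequence of Proposition~\ref{pro:1}, which already establishes the conservativity relation \(\GSI{\Gamma}{\omega}{\skE(T)} \sqsubseteq_{L} T + \IND{\Gamma}\). The only additional ingredient is the observation that the inconsistency of a theory can always be witnessed by a pair of \(L\)-sentences, so that the contradiction may be transported across \(L\)-conservativity as defined in Definition~\ref{def:26}.

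Concretely, I would fix any \(L\)-sentence \(\psi\); since we work in first-order logic with equality, such a sentence always exists (for instance \(\forall x\, x = x\)), and its negation \(\neg \psi\) is again an \(L\)-sentence. If \(\GSI{\Gamma}{\omega}{\skE(T)}\) is inconsistent, then it proves every sentence, in particular both \(\psi\) and \(\neg \psi\). Because \(\psi, \neg\psi \in \mathcal{F}(L)\), the conservativity \(\GSI{\Gamma}{\omega}{\skE(T)} \sqsubseteq_{L} T + \IND{\Gamma}\) supplied by Proposition~\ref{pro:1} yields \(T + \IND{\Gamma} \vdash \psi\) as well as \(T + \IND{\Gamma} \vdash \neg \psi\). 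Hence \(T + \IND{\Gamma}\) is inconsistent, which is the claim.

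There is essentially no obstacle beyond Proposition~\ref{pro:1} itself; the entire mathematical content of the corollary resides in that proposition. The single point that warrants a moment's care is that \(\Gamma\)-conservativity transfers only \(L\)-consequences, so one cannot naively transport \(\bot\) (which need not count as an \(L\)-formula) but must instead transport a concrete contradictory pair of \(L\)-sentences, as done above. With that caveat handled, the derivation is a one-line appeal to Proposition~\ref{pro:1}.
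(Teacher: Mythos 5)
Your proof is correct and takes exactly the route the paper intends: the corollary is stated as an immediate consequence of Proposition~\ref{pro:1}, and your argument just spells out the routine step of transporting a contradictory pair of \(L\)-sentences (such as \(\forall x\, x = x\) and its negation) across the conservativity relation. The caveat you flag about not transporting \(\bot\) directly is a reasonable point of care, but it does not change the substance; this matches the paper's treatment.
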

In the following we will show by a proof-theoretic argument that we even have the converse, that is, ground Skolem terms
 behave in the refutational setting exactly as induction parameters.
 Thus, we start by recalling the necessary concepts from proof theory.
We introduce a partially prenexed form of the induction schema in which the strong quantifier of the induction step is pulled into the quantifier prefix.
Moving this quantifier into the quantifier prefix will simplify the subsequent arguments.
\begin{definition}
  \label{def:14}
  Let \(\gamma(x, \vec{z})\) be a formula, then we define the sentence \(I_{x}'\gamma\) by
  \[
    I_{x}'\gamma \coloneqq \Forall{\vec{z}}{\Exists{x}{\underbrace{\big(\left(\gamma(0, \vec{z}) \wedge (\gamma(x, \vec{z}) \rightarrow \gamma(s(x), \vec{z}))\right) \rightarrow \Forall{w}{\gamma(w, \vec{z})}\big)}_{J_{x}\gamma(x,\vec{z})}}}.
\]
Let \(\Gamma\) be a set of formulas, then we define \(\IND{\Gamma}' \coloneqq \{ I_{x}'\gamma \mid \gamma(x,\vec{z}) \in \Gamma\}\).
\end{definition}
This induction schema is clearly equivalent to the usual one given in Definition~\ref{def:9}.
\begin{lemma}
  \label{lem:16}
   \(\IND{\Gamma} \equiv \IND{\Gamma}'\).
\end{lemma}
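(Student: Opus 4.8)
The plan is to reduce the schema-level equivalence to a purely logical equivalence between the individual axioms: it suffices to show that $\vdash I_x\gamma \leftrightarrow I_x'\gamma$ for every formula $\gamma(x,\vec{z}) \in \Gamma$. Indeed, since $\IND{\Gamma}$ and $\IND{\Gamma}'$ consist precisely of the axioms $I_x\gamma$ and $I_x'\gamma$ for $\gamma \in \Gamma$, this pointwise equivalence immediately yields $\IND{\Gamma} \vdash \IND{\Gamma}'$ and $\IND{\Gamma}' \vdash \IND{\Gamma}$, hence $\IND{\Gamma} \equiv \IND{\Gamma}'$ by Definition~\ref{def:25}. Fixing $\gamma$, and noting that provable equivalence of formulas is preserved under universal closure, it remains to prove that the matrix $\exists x\, J_x\gamma(x,\vec{z})$ of $I_x'\gamma$ is logically equivalent (with $\vec{z}$ free) to the matrix $\tilde{I}_x\gamma$ of $I_x\gamma$.

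First I would rename the bound variable in the consequent $\forall x\,\gamma(x,\vec{z})$ of $J_x\gamma$ to a fresh variable, say $\forall x'\,\gamma(x',\vec{z})$, so that the only free occurrences of $x$ in $J_x\gamma$ are those in the induction-step hypothesis. Writing $P \coloneqq \gamma(0,\vec{z})$, $Q(x) \coloneqq \gamma(x,\vec{z}) \rightarrow \gamma(s(x),\vec{z})$, and $R \coloneqq \forall x'\,\gamma(x',\vec{z})$, the matrix of $I_x'\gamma$ takes the shape $\exists x\,((P \wedge Q(x)) \rightarrow R)$, where $x$ is free neither in $P$ nor in $R$.

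The core of the argument is then the quantifier-shift (independence-of-premise) equivalence $\exists x\,(C(x) \rightarrow R) \leftrightarrow (\forall x\, C(x) \rightarrow R)$, valid in classical logic over nonempty domains whenever $x \notin \FV(R)$. Applying it with $C(x) \coloneqq P \wedge Q(x)$ gives $(\forall x\,(P \wedge Q(x))) \rightarrow R$, and since $x \notin \FV(P)$ we may pull $P$ out of the universal quantifier to obtain $(P \wedge \forall x\, Q(x)) \rightarrow R$. Unfolding the abbreviations, this is exactly $(\gamma(0,\vec{z}) \wedge \forall x(\gamma(x,\vec{z}) \rightarrow \gamma(s(x),\vec{z}))) \rightarrow \forall x'\,\gamma(x',\vec{z})$, i.e.\ the matrix $\tilde{I}_x\gamma$. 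Re-attaching the prefix $\forall\vec{z}$ yields $\vdash I_x\gamma \leftrightarrow I_x'\gamma$ and completes the reduction.

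Each of these steps is a routine first-order equivalence, so there is no serious obstacle; the only points demanding care are the bound-variable renaming in the consequent (keeping the free $x$ of the hypothesis separate from the bound $x$ of the conclusion) and the fact that the direction $(\forall x\, C(x) \rightarrow R) \rightarrow \exists x\,(C(x) \rightarrow R)$ of the quantifier shift relies on the domain being nonempty, which is a standing assumption of the first-order setting used here. If a purely syntactic justification is preferred over this semantic one, the equivalence can be discharged by a short derivation using a case split on whether $\forall x\, Q(x)$ holds.
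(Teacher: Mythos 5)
Your proof is correct, and it is precisely the standard argument the paper has in mind: the paper states Lemma~\ref{lem:16} without proof, treating the equivalence as clear, and your reduction via the classical quantifier shift \(\exists x\,(C(x) \rightarrow R) \leftrightarrow (\forall x\, C(x) \rightarrow R)\) (for \(x \notin \FV(R)\), over nonempty domains) is exactly the routine justification being left implicit. The points you flag --- the bound-variable renaming in the consequent and the reliance on nonempty domains --- are handled correctly.
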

We will work with the following Gentzen system, which is essentially a variant of the calculus \(\mathbf{G1c}\) given in \cite{troelstra2000} with atomic logical axioms extended by a cut rule and axioms for equality.
\begin{definition}
  \label{def:22}
  A sequent is an expression of the form \(\Gamma \Rightarrow \Delta\), where \(\Gamma\) and \(\Delta\) are finite multisets of formulas.
\end{definition}
\begin{definition}
  \label{def:35}
  The sequent calculus \(\mathbf{G}\) consists of the following rules

  \noindent Axioms: \\
  
  \begin{minipage}{.49\linewidth}
    \[
    \begin{prooftree}
      \infer0[\(\mathrm{Ax}\)]{A \Rightarrow A}
    \end{prooftree}
    \]
  \end{minipage}
  \begin{minipage}{.49\linewidth}
    \[
      \begin{prooftree}
        \infer0[\(\mathrm{L}\bot\)]{\bot \Rightarrow }
      \end{prooftree}
    \]
  \end{minipage}\smallskip

    \begin{minipage}[t]{.5\textwidth}
    \[
    \begin{prooftree}
      \infer0[\(\mathrm{Refl}\)]{\Rightarrow t = t}
    \end{prooftree}
    \]
  \end{minipage}
  \begin{minipage}[t]{.5\textwidth}
    \[
      \begin{prooftree}
        \infer0[\(\mathrm{Eq}\)]{t = r, A[x/t] \Rightarrow A[x/r]}
      \end{prooftree}
    \]
  \end{minipage} \\

  \noindent Rules for weakening, contraction, and cut: \\

  \begin{minipage}{.49\linewidth}
    \[
      \begin{prooftree}
        \hypo{\Gamma \Rightarrow \Delta}
        \infer1[\(\mathrm{LW}\)]{F, \Gamma \Rightarrow \Delta}
      \end{prooftree}
    \]
    \[
      \begin{prooftree}
        \hypo{F, F, \Gamma \Rightarrow \Delta}
        \infer1[\(\mathrm{LC}\)]{F, \Gamma \Rightarrow \Delta}
      \end{prooftree}
    \]

  \end{minipage}
  \begin{minipage}{.49\linewidth}
    \[
      \begin{prooftree}
        \hypo{\Gamma \Rightarrow \Delta}
        \infer1[\(\mathrm{RW}\)]{\Gamma \Rightarrow \Delta, F}
      \end{prooftree}
    \]
    \[
      \begin{prooftree}
        \hypo{\Gamma \Rightarrow \Delta, F, F}
        \infer1[\(\mathrm{RC}\)]{\Gamma \Rightarrow \Delta, F}
      \end{prooftree}
    \]
  \end{minipage} \smallskip
  \[
    \begin{prooftree}
      \hypo{\Gamma \Rightarrow \Delta, F}
      \hypo{F, \Lambda \Rightarrow \Pi}
      \infer2[\(\mathrm{Cut}\)]{\Gamma, \Lambda \Rightarrow \Delta, \Pi}
    \end{prooftree}
  \]

  \noindent Rules for logical connectives:

  \noindent
  \begin{minipage}[t]{.52\textwidth}
    \[
      \begin{prooftree}
        \hypo{F_{i}, \Gamma \Rightarrow \Delta}
        \infer1[\(\mathrm{L}\wedge_{i}, i = 0,1\)]{F_0 \wedge F_1, \Gamma \Rightarrow \Delta}
      \end{prooftree}
    \]
  \end{minipage}
  \begin{minipage}[t]{.5\textwidth}
    \[
      \begin{prooftree}
        \hypo{\Gamma \Rightarrow \Delta, F}
        \hypo{\Gamma \Rightarrow \Delta, G}
        \infer2[\(\mathrm{R}\wedge\)]{\Gamma \Rightarrow \Delta, F \wedge G}
      \end{prooftree}
    \]
  \end{minipage} \smallskip

  \noindent
  \begin{minipage}{1\linewidth}
    \begin{minipage}[t]{.5\textwidth}
      \[
        \begin{prooftree}
          \hypo{F, \Gamma \Rightarrow \Delta}
          \hypo{G, \Gamma \Rightarrow \Delta}
          \infer2[\(\mathrm{L}\vee\)]{F \vee G, \Gamma \Rightarrow \Delta}
        \end{prooftree}
      \]
    \end{minipage}
    \begin{minipage}[t]{.52\textwidth}
      \[
        \begin{prooftree}
          \hypo{\Gamma \Rightarrow \Delta, F_{i}}
          \infer1[\(\mathrm{R}\vee_{i}\), \(i = 0,1\)]{\Gamma \Rightarrow \Delta, F_0 \vee F_1}
        \end{prooftree}
      \]
    \end{minipage}
  \end{minipage} \smallskip
  
  \begin{minipage}{1\linewidth}
    \vspace{0pt}
    \begin{minipage}[t]{.50\linewidth}
      \vspace{0pt}
      \[
      \begin{prooftree}
        \hypo{\Gamma \Rightarrow \Delta, F}
        \hypo{G, \Gamma \Rightarrow \Delta}
        \infer2[\(\mathrm{L}\rightarrow\)]{F \rightarrow G, \Gamma \Rightarrow \Delta}
      \end{prooftree}
    \]
  \end{minipage}
  \vspace{0pt}
  \begin{minipage}[t]{.50\linewidth}
    \vspace{0pt}
      \[
      \begin{prooftree}
        \hypo{F, \Gamma \Rightarrow \Delta, G}
        \infer1[\(\mathrm{R}\rightarrow\)]{\Gamma \Rightarrow \Delta, F \rightarrow G}
      \end{prooftree}
    \]
    \end{minipage}
  \end{minipage} \smallskip

  \begin{minipage}{1\linewidth}
    \begin{minipage}[t]{.50\linewidth}
      \[
      \begin{prooftree}
        \hypo{F[x/t], \Gamma \Rightarrow \Delta}
        \infer1[\(\mathrm{L}\forall\)]{\Forall{x}{F}, \Gamma \Rightarrow \Delta}
      \end{prooftree}
    \]
    \end{minipage}
    \begin{minipage}[t]{.50\linewidth}
      \[
      \begin{prooftree}
        \hypo{\Gamma \Rightarrow \Delta, F[x/\alpha]}
        \infer1[\(\mathrm{R}\forall\)]{\Gamma \Rightarrow \Delta, \Forall{x}{F}}
      \end{prooftree}
    \]
    \end{minipage}
  \end{minipage} \smallskip

  \begin{minipage}{1\linewidth}
    \begin{minipage}[t]{.50\linewidth}
      \[
      \begin{prooftree}
        \hypo{F[x/\alpha], \Gamma \Rightarrow \Delta}
        \infer1[\(\mathrm{L}\exists\)]{\Exists{x}{F}, \Gamma \Rightarrow \Delta}
      \end{prooftree}
    \]
    \end{minipage}
    \begin{minipage}[t]{.50\linewidth}
      \[
      \begin{prooftree}
        \hypo{\Gamma \Rightarrow \Delta, F[x/t]}
        \infer1[\(\mathrm{R}\exists\)]{\Gamma \Rightarrow \Delta, \Exists{x}{F}}
      \end{prooftree}
    \]
    \end{minipage}
  \end{minipage} \smallskip
  \\ 
  where \(\Gamma, \Delta, \Lambda, \Pi\) stand for multisets of formulas, \(F, G\) stand for formulas, \(A\) stands for atomic formulas, \(t, r\) stand for terms, and for \(R \in \{\mathrm{L}\forall, \mathrm{R}\exists\}\) the variable \(\alpha\) is called the eigenvariable of \(R\) and \(\alpha\) does not occur freely in the conclusion of \(R\).

\end{definition}
We recall some important notions and properties of the calculus \(\mathbf{G}\).
The calculus \(\mathbf{G}\) is sound and complete for first-order logic.
\begin{lemma}
  \label{lem:22}
  Let \(\varphi\) be a sentence, then \(\vdash \varphi\) if and only if there exists a \(\mathbf{G}\) proof of the sequent \(\Rightarrow \varphi\).
\end{lemma}
\begin{proof}
  See for example \cite{troelstra2000}.
\end{proof}
The calculus \(\mathbf{G}\) has the following form of cut elimination.
\begin{definition}
  \label{def:36}
  In a cut inference the formula \(F\) is called the cut formula.
  We say that a \(\mathbf{G}\) proof is in atomic cut-normal form (ACNF, for short) if all of its cut formulas are atomic.
\end{definition}
\begin{lemma}
  \label{lem:23}
  If a sequent \(\Gamma \Rightarrow \Delta\) is provable in \(\mathbf{G}\), then it has a \(\mathbf{G}\) proof in ACNF.
\end{lemma}
\begin{proof}
  See \cite{troelstra2000}.
\end{proof}
\begin{definition}
  \label{def:13}
  The inference rules \(\mathrm{L}\exists\) or \(\mathrm{R}\forall\) are called strong quantifier inference rules.
  Let \(\pi\) be a \(\mathbf{G}\) proof, then by \(\mathrm{sqi}(\pi)\) we denote the number of strong quantifier inferences in \(\pi\).
\end{definition}
In the argument to follow the number of strong quantifier inferences of a proof will be used as the induction measure.
\begin{lemma}
  \label{lem:18}
  Let \(\pi\) be a \(\mathbf{G}\) proof in ACNF of the sequent \(\Pi, \Sigma \Rightarrow \Delta, \Lambda\), then there exists a proof \(\pi'\) in ACNF of \(\Pi, \skE(\Sigma) \Rightarrow \skA(\Delta), \Lambda\) and \(\mathrm{sqi}(\pi') \leq \mathrm{sqi}(\pi)\).
\end{lemma}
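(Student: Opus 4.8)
The statement is a Skolemization lemma for cut-free sequent proofs: given a proof of $\Pi, \Sigma \Rightarrow \Delta, \Lambda$, we produce a proof of $\Pi, \skE(\Sigma) \Rightarrow \skA(\Delta), \Lambda$ controlling the number of strong quantifier inferences. The natural approach is induction on the structure (or height) of the cut-free proof $\pi$. The formulas in $\Sigma$ appear on the left and get their existential Skolem form, while those in $\Delta$ appear on the right and get their universal Skolem form; the $\Pi$ and $\Lambda$ parts are left untouched. The key observation is that Skolemizing a formula on the left (resp. right) replaces a strong quantifier inference — an $L\exists$ for a left existential, an $R\forall$ for a right universal — by a term substitution that simply does not need that inference anymore, which is exactly why $\mathrm{sqi}$ can only decrease.

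**The plan.** I would proceed by induction on the height of $\pi$, examining the last inference. Most cases are routine: if the last inference acts on a formula in $\Pi$ or $\Lambda$, or is a propositional or weak quantifier rule ($R\exists$, $L\forall$) acting on a formula in $\Sigma$ or $\Delta$, one simply applies the induction hypothesis to the premise(s) and reapplies the same rule, pushing the $\sk$ operators inward in accordance with the recursive clauses of Definition~\ref{def:4}; since no strong quantifier inference is added, the bound on $\mathrm{sqi}$ is preserved. The interesting cases are when the last inference is a strong quantifier rule $R\forall$ introducing a universal quantifier into a formula $\forall x A$ of $\Delta$ (dually $L\exists$ for $\exists x A$ in $\Sigma$). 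Here the premise contains $A(a)$ for an eigenvariable $a$ subject to the eigenvariable condition, and in the conclusion this quantifier must be stripped by Skolemization: $\skA(\forall x A)$ drops the $\forall x$ and substitutes the Skolem term $\mathfrak{s}_{\forall x A}(\vec{y})$ for $x$. The heart of the argument is to replace the eigenvariable $a$ throughout the subproof by this Skolem term and then invoke the induction hypothesis, so that the $R\forall$ inference disappears entirely, strictly lowering $\mathrm{sqi}$.

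**The main obstacle.** The delicate point is the bookkeeping of Skolem terms when substituting the eigenvariable. After replacing $a$ by $\mathfrak{s}_{\forall x A}(\vec{y})$ in the subproof, I need to check that the formula $A(a)$ becomes $A(\mathfrak{s}_{\forall x A}(\vec{y}))$, and that the latter is precisely $\skA(\forall x A)$ modulo the Skolem operators already pushed through $A$; this must be reconciled with the canonical naming scheme of Definition~\ref{def:3}, so that the Skolem symbol named for $\forall x A$ genuinely takes as arguments exactly the free variables $\vec{y}$ of $\forall x A$. One must verify that this substitution does not disturb the atomic initial sequents — which it does not, since the equality initial sequents are closed under substitution of terms for variables — and that eigenvariable conditions for any \emph{other} strong quantifier inferences lower in the proof are not violated, which is guaranteed by choosing eigenvariables fresh and by the fact that Skolem terms introduce no new variables beyond $\vec{y}$. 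A secondary subtlety is commuting the order of Skolemization: the term $\mathfrak{s}_{\forall x A}(\vec{y})$ depends on the free variables of $\forall x A$, so when several quantifiers of a single formula in $\Sigma$ or $\Delta$ are Skolemized, the inner Skolemization discipline must be respected, which matches the mutual recursion in Definition~\ref{def:4}. I expect this alignment between the proof-theoretic eigenvariable substitution and the syntactic canonical-naming convention for $\sk^{\exists}$ and $\sk^{\forall}$ to be the step requiring the most care; once it is handled, the $\mathrm{sqi}$ bound follows immediately since each such case removes one strong quantifier inference and adds none.
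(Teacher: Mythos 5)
Your proposal is correct and matches the paper's own argument: the paper's (very terse) proof likewise follows the ancestors of the formulas in \(\Sigma\) and \(\Delta\) and replaces the eigenvariables of the strong quantifier inferences on these ancestors by the corresponding Skolem terms, which is exactly the transformation you organize as an induction on proof height. The bookkeeping issues you flag (canonical Skolem naming, closure of the equality initial sequents under substitution, preservation of the remaining eigenvariable conditions) are precisely what the paper leaves implicit, and your treatment of them is sound.
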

\begin{proof}
  We follow the ancestors of the formulas in \(\Sigma\) and \(\Delta\) in \(\pi\) and replace eigenvariables of these ancestors by their respective Skolem terms.
\end{proof}
\begin{proposition}
  \label{pro:14}
  Let \(T\) be a theory with \(L_{0} \subseteq L(T)\) and \(\Gamma\) a set of formulas.
  If \(T + \IND{\Gamma}\) is inconsistent, then \(\GSI{\Gamma}{\omega}{\skE(T)}\) is inconsistent. 
\end{proposition}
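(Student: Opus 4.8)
The plan is to argue proof-theoretically: I would take a cut-free refutation of \(T + \IND{\Gamma}\), Skolemize away its strong quantifier inferences by means of Lemma~\ref{lem:18}, and observe that the Skolem terms thereby produced are exactly the ground parameter terms permitted by the grounding operator underlying \(\GSI{\Gamma}{\omega}{\cdot}\). Concretely, I would first pass to the prenex schema: by Lemma~\ref{lem:16} the theory \(T + \IND{\Gamma}'\) is inconsistent, so by compactness there is a finite \(T_{0} \subseteq T\) and finitely many instances \(I_{x}'\gamma_{1}, \dots, I_{x}'\gamma_{m}\) with \(T_{0}, I_{x}'\gamma_{1}, \dots, I_{x}'\gamma_{m} \Rightarrow\) derivable in \(\mathbf{G1c^{=}}\). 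By cut-elimination I may take a proof \(\pi\) of this sequent that is free of essential cuts and has atomic initial sequents.

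Next I would apply Lemma~\ref{lem:18}, collecting all antecedent formulas into \(\Sigma\) (with \(\Pi, \Delta, \Lambda\) empty), to obtain a cut-free proof \(\pi'\) of \(\skE(T_{0}), \skE(I_{x}'\gamma_{1}), \dots, \skE(I_{x}'\gamma_{m}) \Rightarrow\). The reason for Skolemizing \emph{every} antecedent formula is that \(\skE(\sigma)\) no longer contains any quantifier occurring in strong position, so a proof of the Skolemized sequent requires no strong quantifier inference at all; hence every weak quantifier in \(\pi'\) — in particular the outer \(\forall \vec{z}\) of each \(\skE(I_{x}'\gamma_{i}) = \forall \vec{z}\,(\dots)\) — is instantiated, no eigenvariables being available, by a ground term of \(\sk^{\omega}(L(T))\). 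Collecting these finitely many instantiations \(\vec{t}_{i,j}\), the proof \(\pi'\) uses only \(\skE(T_{0})\) together with the ground instances \(\skE(I_{x}'\gamma_{i})[\vec{z} := \vec{t}_{i,j}]\).

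The decisive step is to recognize these ground instances as induction axioms available to \(\GSI{\Gamma}{\omega}{\skE(T)}\). A direct computation, using that the prenexed \(\exists x\) of \(I_{x}'\gamma_{i}\) Skolemizes to a single witness term, shows that \(\skE(I_{x}'\gamma_{i})[\vec{z} := \vec{t}]\) coincides, up to the \emph{name} of the generated induction Skolem symbol, with the axiom \(\skE(I_{x}\gamma_{i}(x, \vec{t}))\) supplied by the grounding operator: the former carries the symbol \(\mathfrak{s}_{\exists x J_{x}\gamma_{i}(x, \vec{z})}(\vec{t})\), the latter the canonical symbol arising from \(I_{x}\gamma_{i}(x, \vec{t})\). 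Since each \(\vec{t}_{i,j}\) is built from symbols of bounded stage, I would order the used instances as \(S_{0} \subseteq \dots \subseteq S_{n}\) so that every \(\vec{t}_{i,j}\) is a ground term over the language reached at the appropriate finite stage of the \(\GSI{\Gamma}{}{\cdot}\) iteration; this places each axiom used in \(\GSI{\Gamma}{\omega}{\skE(T)}\). Finally I would uniformly rename the foreign witness terms occurring in \(\pi'\) to their canonical counterparts, turning \(\pi'\) into a refutation whose axioms all lie in \(\GSI{\Gamma}{\omega}{\skE(T)}\); together with \(\skE(T_{0}) \subseteq \skE(T)\) this establishes the claimed inconsistency.

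I expect the main obstacle to be exactly this last piece of bookkeeping: verifying that canonical inner Skolemization commutes with the substitution of ground terms for the induction parameters up to a renaming of Skolem symbols, and that this renaming propagates through the cut-free proof \(\pi'\) without clashes, so that distinct instances \((\gamma_{i}, \vec{t})\) are sent to distinct canonical symbols and the witness symbols do not interfere with the remaining vocabulary. The staging argument that guarantees each ground parameter term is generated before it is consumed is the other delicate point, although it mirrors the stage induction already performed in the proof of Lemma~\ref{lem:5}.
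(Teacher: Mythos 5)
Your opening moves coincide with the paper's (Lemma~\ref{lem:16} to pass to \(\IND{\Gamma}'\), compactness, a cut-free \(\mathbf{G1c^{=}}\) proof with atomic initial sequents, Lemma~\ref{lem:18}), but at the decisive step you diverge, and that is exactly where your argument has a genuine gap. You Skolemize the whole antecedent in one pass and are then left with prenex-style witness terms \(\mathfrak{s}_{\exists x J_{x}\gamma_{i}(x,\vec{z})}(\vec{t})\) instead of the canonical constants \(\mathfrak{s}_{\forall x (\gamma_{i}(x,\vec{t}) \rightarrow \gamma_{i}(s(x),\vec{t}))}\) that actually occur in the axioms of \(\GSI{\Gamma}{\omega}{\skE(T)}\); the entire burden of the proof is then carried by the ``renaming'' you defer to the end. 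Two problems. First, your ``direct computation'' claim is only correct for quantifier-free \(\gamma_{i}\): Proposition~\ref{pro:14} allows arbitrary \(\Gamma\), and when \(\gamma_{i}\) contains quantifiers, \(\skE(\exists x J_{x}\gamma_{i}(x,\vec{z}))[\vec{z} \coloneqq \vec{t}]\) and \(\skE(I_{x}\gamma_{i}(x,\vec{t}))\) differ not in one symbol but in a whole family of inner Skolem symbols, and not merely in their names but in their \emph{arities} (functions applied to \(\vec{t}\), named by formulas mentioning the variables \(\vec{z}\), versus nullary constants named by formulas mentioning \(\vec{t}\)). Second, the resulting map is a replacement of compound ground terms by constants, possibly non-injective (distinct pairs \((\gamma_{i},\vec{t})\) can produce the same canonical constant), which must be threaded innermost-first through nested occurrences (instantiation terms of one induction axiom may contain the witness terms of another) and verified against every inference of \(\pi'\), including the equality initial sequents of \(\mathbf{G1c^{=}}\). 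None of this is routine, and none of it is carried out; as written, the proposal proves the theorem only modulo an unproved — and, as stated, in general false — commutation claim.

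The paper's proof is organized precisely so that this problem never arises. It proceeds by induction on \(\mathrm{sqi}(\mu)\), peels off the bottommost \(L\exists\) inference belonging to one prenex induction axiom, and substitutes its eigenvariable \(\alpha\) by the canonical constant \(c \coloneqq \mathfrak{s}_{\forall x (\varphi(x,\vec{t}) \rightarrow \varphi(s(x),\vec{t}))}\) \emph{before} Skolemizing; only then is Lemma~\ref{lem:18} applied, and to that single formula. With \(c\) already in place, the canonical naming scheme of Definition~\ref{def:4} yields the syntactic identity \(\skE(J_{x}\varphi(c,\vec{t})) = \skE(I_{x}\varphi(x,\vec{t}))\) on the nose — the right-hand side is literally an axiom of \(\GSI{\Gamma}{\omega}{\skE(T)}\) — so no renaming, no clash analysis, and no commutation lemma are ever needed; the recursion through the remaining strong quantifier inferences then does all the work your staging discussion gestures at. To repair your argument you would either have to prove the renaming lemma in full (essentially redoing the proof of Lemma~\ref{lem:18} with substantially more bookkeeping), or reorganize the proof along the paper's interleaved substitute-then-Skolemize scheme.
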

\begin{proof}
  Assume that \(T + \IND{\Gamma}\) is inconsistent, then clearly \(\skE(T) + \IND{\Gamma}'\) is inconsistent as well.
  Hence by Lemma~\ref{lem:22} of \(\mathbf{G}\) there exists a proof \(\pi\) in ACNF of a sequent of the form \(\Pi, I \Rightarrow\), where \(\Pi\) is a finite subset of \(\skE(T)\) and \(I\) is a finite subset of \(\IND{\Gamma}'\).
  Observe, furthermore, that we can assume without loss of generality that the symbol \(0\) occurs in \(\Pi\) since \(L_{0} \subseteq L(T)\).
  
  Let \(\mu\) be a proof in ACNF of a sequent of the form \(\Sigma, I \Rightarrow\) with \(\Pi \subseteq \Sigma \subseteq \GSI{\Gamma}{\omega}{\skE(T)}\).
  We proceed by induction on the number of strong quantifier inferences of \(\mu\) in order to obtain a proof of a sequent \(\Sigma' \Rightarrow\) where \(\Sigma' \subseteq \GSI{\Gamma}{\omega}{\skE(T)}\).
  If \(\mu\) does not contain strong quantifier inferences, then we obtain a proof of \(\Sigma \Rightarrow\) by permuting inferences on ancestors of \(I\) downward.
  For the induction step assume that \(\mu\) contains at least one strong quantifier inference.
  Because \(\mu\) does not contain non-atomic cuts, we can permute quantifier inferences toward the bottom of the proof without introducing any new strong quantifier inferences.
  Since \(\Sigma\) is free of strong quantifiers any strong quantifier inference takes place on an ancestor of a formula in \(I\).
  Hence, by permuting a strong quantifier inference toward the bottom of the proof \(\mu\), we obtain a proof \(\nu\) with \(\mathrm{sqi}(\nu) \leq \mathrm{sqi}(\mu)\) of the form
  \begin{equation*}
    \label{eq:4}
    \begin{prooftree}
      \hypo{(\nu'(\alpha))}
      \infer[no rule]1{\Sigma, J_{x}\varphi(\alpha, \vec{t}), I \Rightarrow}
      \infer1[L\(\exists\)]{\Sigma, \Exists{x}{J_{x}\varphi(x, \vec{t})}, I \Rightarrow}
      \infer1[L\(\forall^{*}\)]{\Sigma, \Forall{\vec{z}}{\Exists{x}{J_{x}\varphi(x, \vec{z})}}, I \Rightarrow}
      \infer1[LC]{\Sigma, I \Rightarrow},
    \end{prooftree}
  \end{equation*}
  where \(\varphi(x, \vec{z})\) is a \(\Gamma\) formula and \(\vec{t}\) is a vector of ground terms for which we can assume without loss of generality that \(L(\vec{t}) \subseteq L(\Sigma)\).
  If \(\vec{t}\) would contain a symbol \(\sigma\) of \(I\) that does not already occur in \(\Sigma\), then there is a formula \(\gamma(\vec{x}) \in \Gamma\) containing \(\sigma\) and we introduce \(\skE(I_{x}\gamma(0,\dots,0))\) into \(\Sigma\) by a left weakening.
  Now we let
  \[
    c \coloneqq \mathfrak{s}_{\Forall{x}{\left(\varphi(x, \vec{t}) \rightarrow \varphi(s(x), \vec{t})\right)}}.
  \]
  Then by substituting \(\alpha\) by \(c\) in \(\nu'\) we obtain a proof \(\mu^{\prime} = \nu^{\prime}(c)\) of the sequent \(\Sigma, J_{x}\varphi(c, \vec{t}), I \Rightarrow\).
  We have \(\mathrm{sqi}(\mu^{\prime}) = \mathrm{sqi}(\nu^{\prime}) = \mathrm{sqi}(\nu) - 1\).
  Then by Lemma~\ref{lem:18} there exists a proof \(\mu^{\prime\prime}\) in ACNF of \(\Sigma, \skE(J_{x}\varphi(c, \vec{t})), I \Rightarrow\) with \(\mathrm{sqi}(\mu^{\prime\prime}) \leq \mathrm{sqi}(\mu^{\prime})\).
  Now observe that \(\skE(J_{x}\varphi(c, \vec{t})) = \skE(I_{x}\varphi(x, \vec{t}))\).
  Finally, we apply the induction hypothesis to \(\mu^{\prime\prime\prime}\) in order to obtain the desired proof.

  Now we finish by applying the above procedure to \(\pi\) in order to obtain a proof of \(\Sigma \Rightarrow\) with \(\Pi \subseteq \Sigma \subseteq \GSI{\Gamma}{\omega}{\skE(T)}\).
  By Lemma~\ref{lem:22} it follows that \(\GSI{\Gamma}{\omega}{\skE(T)}\) is inconsistent.
\end{proof}
We can summarize the results in the following proposition.
\begin{proposition}
  \label{thm:1}
  Let \(L\) be a Skolem-free first-order language, \(T\) an \(L\) theory with \(L_{0} \subseteq L(T)\), and \(\Gamma\) a set of \(L\) formulas, then \(\GSI{\Gamma}{\omega}{\skE(T)}\) is inconsistent if and only if \(T + \IND{\Gamma}\) is inconsistent.
\end{proposition}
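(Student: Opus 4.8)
The plan is to recognize that both directions of this equivalence have already been proved in the preceding results, so the proof amounts to assembling them.

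First I would treat the direction from left to right. Assuming $\GSI{\Gamma}{\omega}{\skE(T)}$ is inconsistent, this is exactly the hypothesis of Corollary~\ref{cor:1}, which immediately yields that $T + \IND{\Gamma}$ is inconsistent. Conceptually this is the ``easy'' direction: it rests on the conservativity statement Proposition~\ref{pro:1}, where the ground terms occurring in the induction formulas---possibly containing Skolem symbols---are abstracted back into induction parameters via Lemma~\ref{lem:41}, after which the Skolem axioms are discarded over the Skolem-free theory $T + \IND{\Gamma}$ by Propositions~\ref{pro:16} and~\ref{pro:9}.

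For the converse, assuming $T + \IND{\Gamma}$ is inconsistent, this is precisely the hypothesis of Proposition~\ref{pro:14}, whose conclusion is that $\GSI{\Gamma}{\omega}{\skE(T)}$ is inconsistent. Putting the two implications together establishes the equivalence, and no further work is required.

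The genuine difficulty has already been absorbed into Proposition~\ref{pro:14}, and it is worth recalling where the obstacle lies: the proof-theoretic argument passes to a cut-free $\mathbf{G1c^{=}}$ proof of inconsistency using the prenexed schema $\IND{\Gamma}'$, and then, by induction on the number of strong quantifier inferences, replaces the eigenvariable of each $L\exists$ inference stemming from the induction step by the induction Skolem symbol $c \coloneqq \mathfrak{s}_{\forall x \varphi(x, \vec{t}) \rightarrow \varphi(s(x), \vec{t})}$, invoking Lemma~\ref{lem:18} to Skolemize the remaining induction formula. This is exactly what confirms the intuition voiced after Lemma~\ref{lem:27} that ground Skolem terms supply, in the refutational setting, precisely the strength of induction parameters.
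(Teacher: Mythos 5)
Your proposal is correct and takes essentially the same route as the paper, whose proof simply states that the result is an immediate consequence of Propositions~\ref{pro:1} and~\ref{pro:14}; you assemble exactly these two implications (via Corollary~\ref{cor:1} for the left-to-right direction) and your recap of the proof-theoretic content of Proposition~\ref{pro:14} is accurate.
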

\begin{proof}
  An immediate consequence of the propositions~\ref{pro:1} and~\ref{pro:14}.
\end{proof}
The above result shows that in a refutational setting allowing Skolem symbols to appear in ground terms of induction formulas corresponds exactly to induction with parameters.
This confirms our initial intuition that Skolem symbols in ground terms behave like induction parameters.
We can rephrase the result of Proposition~\ref{thm:1} as follows.
\begin{theorem}
  \label{cor:2}
  Let \(L\) be a Skolem-free first-order language, \(T\) an \(L\) theory, \(\Gamma\) a set of \(L\) formulas, \(\varphi\) an \(L\) formula such that \(L_{0} \subseteq L(T) \cup L(\varphi)\), and \(\mathcal{S}\) a sound and refutationally complete saturation system.
  Then \(\mathcal{S} + \INDRPF{\Gamma}\) refutes \(\cnf(\skE(T + \neg \varphi))\) if and only if \(T + \IND{\Gamma} \vdash \varphi\).
\end{theorem}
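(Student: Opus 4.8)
The plan is to obtain the statement by chaining the two main results already established for the operator \(\GSI{\Gamma}{\omega}{\cdot}\), applied not to \(T\) itself but to the extended theory \(T' \coloneqq T + \neg\varphi\). First I would observe that, since \(\varphi\) is an \(L\) formula (taking its universal closure if it has free variables), \(\neg\varphi\) is again an \(L\) sentence, so \(T'\) is again an \(L\) theory over the \emph{same} Skolem-free language \(L\). This is the only point requiring a little care, and it guarantees that both Proposition~\ref{pro:5} and Proposition~\ref{thm:1} apply verbatim with \(T\) replaced by \(T'\).

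With this in place, the argument is a three-step equivalence. By Proposition~\ref{pro:5}, which uses that \(\mathcal{S}\) is sound and refutationally complete, \(\mathcal{S} + \INDRPF{\Gamma}\) refutes \(\cnf(\skE(T'))\) if and only if \(\GSI{\Gamma}{\omega}{\skE(T')}\) is inconsistent. By Proposition~\ref{thm:1}, which uses that \(L\) is Skolem-free and \(\Gamma\) is a set of \(L\) formulas, the latter holds if and only if \(T' + \IND{\Gamma}\) is inconsistent.

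Finally I would unfold \(T' + \IND{\Gamma} = T + \neg\varphi + \IND{\Gamma}\) and appeal to pure classical logic: for any \(L\) theory \(U\) and \(L\) sentence \(\varphi\), the theory \(U + \neg\varphi\) is inconsistent precisely when \(U \vdash \varphi\). Instantiating \(U \coloneqq T + \IND{\Gamma}\) yields exactly \(T + \IND{\Gamma} \vdash \varphi\), which closes the chain of equivalences and proves the theorem.

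I do not expect any genuine obstacle here; the result is essentially a reformulation of Proposition~\ref{thm:1} in terms of derivability rather than consistency, with Proposition~\ref{pro:5} supplying the passage from the saturation system to the operator \(\GSI{\Gamma}{\omega}{\cdot}\). The only things to watch are the side conditions (soundness and refutational completeness of \(\mathcal{S}\), and Skolem-freeness of \(L\)) together with the trivial but necessary remark that extending \(T\) by \(\neg\varphi\) does not enlarge the language, so that the Skolem-freeness hypothesis needed for Proposition~\ref{thm:1} is preserved.
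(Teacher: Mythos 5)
Your proposal is correct and matches the paper's own (implicit) argument: the paper states this theorem as a rephrasing of Proposition~\ref{thm:1}, and the intended justification is exactly your chain, namely Proposition~\ref{pro:5} applied to \(T + \neg\varphi\), then Proposition~\ref{thm:1}, then the classical equivalence between inconsistency of \(T + \IND{\Gamma} + \neg\varphi\) and \(T + \IND{\Gamma} \vdash \varphi\). Your remark that \(\neg\varphi\) must be read as the negation of the universal closure of \(\varphi\) (so that the language stays \(L\) and the final logical step goes through) is a point the paper glosses over, and it is handled correctly in your write-up.
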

We have thus obtained a Skolem-free characterization of a sound and refutationally complete saturation-based proof system with the induction rule \(\INDRPF{\Gamma}\).
We conclude this section with a question about a generalization of Theorem \ref{cor:2}.
\begin{question}
  \label{que:1}
  Consider again the situation of Lemma~\ref{lem:20}, where we have shown that \(\GSI{\Gamma}{\omega}{T}\) is \(L\) conservative over \(\SASchema{L} + T + \IND{\Gamma}\) where \(L \supseteq L_{0}\) is a first-order language, \(T\) an \(L\) theory, and \(\Gamma\) a set of \(L\) formulas.
  This gives rise to the question whether we can characterize a system that allows initial Skolem symbols to occur in the induction formulas without restriction, but restricts the occurrences of induction Skolem symbols in an analogous way to Proposition~\ref{pro:14}.
  In particular, is \(\GSI{\Gamma}{\omega}{T}\) inconsistent if and only if \(\SASchema{L} + T + \IND{\Gamma}\) is inconsistent?
\end{question}
\section{Unprovability}
\label{sec:unprovability}
In the previous sections we have studied two forms of induction rules occurring in saturation-based induction provers.
In particular we were able to give a Skolem-free characterization as a logical theory of the induction rule \(\INDRPF{\Gamma}\) where \(\Gamma\) is a set of Skolem-free formulas.
In this section we will make use of this result in order to provide concrete unprovability results for saturation systems that make use of this induction rule.
In Section~\ref{sec:unprovability:open_induction} we will provide unprovability results for saturation-based systems that are based on the induction rule \(\INDRPF{\Open(L)}\), where \(L\) stands for the language of the initial clause set.
Then in Section~\ref{sec:unprovability:literal_induction_a_case_study} we show that the concrete methods described in \cite{reger2019,hajdu2020} belong to this family and that therefore we obtain unprovability results for these methods.
\subsection{Open induction}
\label{sec:unprovability:open_induction}
The setting of linear arithmetic described in Section~\ref{sec:preliminaries:arithmetic} proves to be a source of very simple and practically relevant unprovability examples.
We make use of an elegant characterization proved by Shoenfield \cite{shoenfield1958}.
\begin{definition}
  \label{def:21}
  By \(\mathcal{T}'\) we denote the theory having the axioms of \(\mathcal{T}\) together with the axioms
  \begin{gather}
    x \neq 0 \rightarrow x = s(p(x)) \tag{B1} \label{B:1}, \\
    x + y = y + x \tag{B2} \label{B:2}, \\
    (x + y) + z = x + (y + z) \tag{B3} \label{B:3}, \\
    x + y = x + z \rightarrow y = z. \tag{B4} \label{B:4}
  \end{gather}
\end{definition}
\begin{theorem}[Shoenfield \cite{shoenfield1958}]
  \label{thm:9}
  \(\mathcal{T}' \equiv \mathcal{T} + \IND{\Open(\mathcal{T})}\).
\end{theorem}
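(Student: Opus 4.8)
The plan is to prove the two inclusions separately. Since the axioms of $\mathcal{T}$ are among those of $\mathcal{T}'$, the inclusion $\mathcal{T}' \sqsupseteq \mathcal{T} + \IND{\Open(\mathcal{T})}$ reduces to deriving the four extra axioms \eqref{B:1}--\eqref{B:4} from $\mathcal{T} + \IND{\Open(\mathcal{T})}$, while the converse reduces to showing $\mathcal{T}' \vdash \IND{\Open(\mathcal{T})}$.

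For the first inclusion I would establish the routine identities $0 + y = y$ and $s(x) + y = s(x + y)$ by open induction on $y$ (with $x$ as a parameter), using \eqref{ax:A:0} and \eqref{ax:A:s}; together with Lemma~\ref{lem:33} these are the standard ingredients. Axiom \eqref{B:1} follows by open induction on $x = 0 \vee x = s(p(x))$, whose step uses \eqref{ax:P:s}; commutativity \eqref{B:2} and associativity \eqref{B:3} follow by open induction on a summand using the auxiliary identities; and cancellation \eqref{B:4} follows by open induction on $x$ in the open formula $x + y = x + z \rightarrow y = z$, treating $y, z$ as induction parameters and applying Lemma~\ref{lem:33} in the step. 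Every induction formula here is quantifier-free with parameters, hence lies in $\Open(\mathcal{T})$, so the argument stays inside $\mathcal{T} + \IND{\Open(\mathcal{T})}$.

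For the harder inclusion $\mathcal{T}' \vdash \IND{\Open(\mathcal{T})}$ I would argue model-theoretically and invoke Gödel completeness: it suffices that every $M \models \mathcal{T}'$ satisfy $I_{x}\varphi$ for each open $\varphi$. From \eqref{ax:A:0}, \eqref{B:2}, \eqref{B:3}, \eqref{B:4} the reduct $(M, +, 0)$ is a cancellative commutative monoid, so it embeds in its difference group $G$; one checks $s(x) = x + \numeral{1}$ and, using \eqref{B:1} and \eqref{ax:D}, that $p$ acts as subtraction of $\numeral{1}$ away from $0$ while no element satisfies $x + \numeral{1} = 0$. Hence $M$ decomposes as the standard part $\mathbb{N}$ together with a disjoint family of $\mathbb{Z}$-chains, one for each nonzero coset of $\mathbb{Z}\cdot\numeral{1}$ meeting $M$. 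The decisive ingredient is then a definability lemma: modulo the $B$-axioms every one-variable term is, away from finitely many points, a $\mathbb{Z}$-affine function $a x + c$ of $x$, so each atomic formula $t_{1}(x) = t_{2}(x)$ defines on a single chain a finite or cofinite set, and therefore so does every open $\varphi(x, \vec{a})$. Given $\varphi(0, \vec{a})$ and $\forall x(\varphi \rightarrow \varphi(s(x)))$, the set $D = \{ m \mid M \models \varphi(m, \vec{a})\}$ contains $\mathbb{N}$ and, on each $\mathbb{Z}$-chain, is a successor-closed finite-or-cofinite set, hence empty or the whole chain. To exclude emptiness I would compare truth values: outside the finitely many solutions of the non-constant atomic equations the truth pattern of the atoms of $\varphi$ on a chain coincides with the pattern at large standard arguments (constant atoms retain their value, equations with a nonzero $x$-coefficient are unsatisfied), where $\varphi$ holds because $\mathbb{N} \subseteq D$; thus $D$ is cofinite, hence all, of each chain, giving $D = M$.

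The main obstacle is precisely this definability analysis: one must control the terms of the language, in particular the non-linear behaviour of $p$ at $0$, carefully enough to guarantee that every open formula is generically constant on each $\mathbb{Z}$-chain and that its generic value agrees with its value on the standard part. Once this uniform genericity is secured, the successor- and predecessor-closure arguments on a chain $\cong \mathbb{Z}$ are immediate, and completeness converts the semantic statement $M \models I_{x}\varphi$ into the required derivation in $\mathcal{T}'$.
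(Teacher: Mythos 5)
Your proposal is correct, but there is nothing in the paper to compare it against line by line: Theorem~\ref{thm:9} is imported from Shoenfield \cite{shoenfield1958} as a black box, and the article contains no proof of it. Judged on its merits, both halves of your plan work. The easy direction (deriving \eqref{B:1}--\eqref{B:4} from open induction) is routine as you describe; note that the paper's Lemma~\ref{lem:21} proves the sharper fact that induction for \emph{literals} already suffices, using the contrapositive literal \(y \neq z \rightarrow x + y \neq x + z\) for \eqref{B:4} and an indirect induction on the literal \(x \neq y\) for \eqref{B:1}, whereas your inductions use genuinely open formulas (a disjunction, an implication) --- perfectly fine for the theorem as stated, just less economical. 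For the hard direction, your model-theoretic route goes through: cancellativity (\eqref{B:4} with \eqref{B:2}) embeds \(M\) in its difference group; \eqref{ax:D} gives \(\numeral{k} \neq 0\), so \(\mathbb{Z}\cdot\numeral{1}\) is infinite cyclic, the nonstandard part splits into honest \(\mathbb{Z}\)-chains, and an equation whose two sides have distinct \(x\)-coefficients has at most one solution \emph{per chain} (two solutions on one chain would force \(\numeral{ak} = 0\)). This per-chain uniqueness is the point that needs care in a full write-up: uniqueness \emph{across} chains can genuinely fail in models of \(\mathcal{T}'\) --- that is exactly the failure of \(C_{m}\) recorded in Lemma~\ref{lem:11} --- and your argument correctly never relies on it. With the \(p\)-exceptions confined to finitely many points per chain and the generic truth pattern of the atoms agreeing between chains and the standard part, cofiniteness of the induction-defined set on each chain follows, and successor/predecessor closure finishes. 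This is in substance the same ``two linear functions coincide or meet in at most one point'' analysis that the paper itself performs in Appendix~\ref{sec:appendix:b} for the single concrete structure \(\mathcal{M}\) (Lemmas~\ref{lem:29} and~\ref{lem:34}); you generalize it to arbitrary models of \(\mathcal{T}'\), which is what the provability direction requires. So your proposal is a correct, self-contained substitute for the citation, and it buys a uniform treatment of both Shoenfield results the paper uses: the provability of open induction here, and (by specializing the same machinery to suitable models) the unprovability facts behind Lemma~\ref{lem:11}.
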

The following formulas were already studied by Shoenfield in \cite{shoenfield1958}.
Their interesting relation to the theory \(\mathcal{T}'\) will be crucial for our unprovability results.
\begin{definition}
  Let \(m\) and \(n\) be natural numbers, then we define
  \label{def:10}
  \begin{gather*}
    \label{eq:6}
    \text{\(C_{m} \coloneqq \Forall{x, y}{(m \cdot x = m \cdot y \rightarrow x = y)}\)}\\
    \text{\(D_{m,n} \coloneqq \Forall{x, y}{(s^{n}(m \cdot x) \neq m \cdot y)}\).}
  \end{gather*}
\end{definition}
The proof of \cite[Theorem 2]{shoenfield1958} given by Shoenfield can be seen to show that \(\mathcal{T} + \IND{\Open(\mathcal{T})}\) does not prove any of the formulas \(C_{m}\) and \(D_{m,n}\) with \(0 < n < m\).
\begin{lemma}[{\cite{shoenfield1958}}]
  \label{lem:11} \label{lem:12}
  Let \(0 < n < m\), then
  \begin{itemize}
  \item \(\mathcal{T} + \IND{\Open(\mathcal{T})} \not \vdash C_{m}\);
  \item \(\mathcal{T} + \IND{\Open(\mathcal{T})} \not \vdash D_{m,n}\).
  \end{itemize}
\end{lemma}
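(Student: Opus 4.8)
The plan is to reduce the statement to a model-theoretic construction and then exhibit explicit countermodels in the spirit of Shoenfield. First I would invoke Shoenfield's Theorem~\ref{thm:9}, which gives \(\mathcal{T} + \IND{\Open(\mathcal{T})} \equiv \mathcal{T}'\); hence it suffices to show \(\mathcal{T}' \not\vdash C_m\) and \(\mathcal{T}' \not\vdash D_{m,n}\) for \(0 < n < m\). By the soundness theorem for first-order logic this reduces in each case to constructing a model of \(\mathcal{T}'\) in which the sentence fails.

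I would build both models from a single recipe. Given an abelian group \(A\) with a distinguished element \(\iota \in A\) of infinite order, form the group \(\Gamma \coloneqq \mathbb{Z} \oplus A\) and take the submonoid
\[ M \coloneqq \{(0,0)\} \cup \{(0, k\iota) \mid k \geq 1\} \cup \{(a, \alpha) \mid a \geq 1,\ \alpha \in A\}, \]
interpreting \(0\) as \((0,0)\), the constant \(1\) as \((0,\iota)\), addition as the group operation of \(\Gamma\), and \(s,p\) as \(\pm(0,\iota)\) (with \(p(0,0) \coloneqq (0,0)\)). The first coordinate serves as an \emph{infinitary rank}: the standard chain \(\numeral{k} = (0,k\iota)\) forms a clean copy of \(\mathbb{N}\) at rank \(0\), while all exotic behaviour is confined to the nonstandard cloud at rank \(\geq 1\). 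Checking \(M \models \mathcal{T}'\) is then routine: as a submonoid of \(\Gamma\) it satisfies \eqref{B:2}, \eqref{B:3} and the cancellation axiom \eqref{B:4}, and \eqref{ax:A:0}, \eqref{ax:A:s} are immediate; the rank coordinate makes \((0,0)\) the unique non-successor, yielding \eqref{ax:D} and the discreteness axiom \eqref{B:1}, since \((0,k\iota)\) has predecessor \((0,(k-1)\iota)\) and any \((a,\alpha)\) with \(a \geq 1\) has predecessor \((a,\alpha-\iota) \in M\).

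It then remains to instantiate \(A\). To refute \(C_m\) I would take \(A \coloneqq \mathbb{Z} \oplus \mathbb{Z}/m\) with \(\iota \coloneqq (1,\bar 0)\). The rank-\(1\) elements \((1,(0,\bar 0))\) and \((1,(0,\bar 1))\) are distinct, yet \(m \cdot (1,(0,\bar 0)) = (m,(0,\bar 0)) = m \cdot (1,(0,\bar 1))\) because \(m\bar 1 = \bar 0\), contradicting \(C_m\). To refute \(D_{m,n}\) I would take \(A \coloneqq \mathbb{Z}[1/m]\) with \(\iota \coloneqq 1\), and consider \(x \coloneqq (1,0)\) and \(y \coloneqq (1, n/m)\) at rank \(1\); then \(m \cdot x = (m,0)\) and \(s^n(m \cdot x) = (m,0) + (0,n) = (m,n) = m \cdot y\), so \(D_{m,n}\) fails. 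In both cases the witnesses are pushed into the nonstandard region, so their ``difference''—an \(m\)-torsion element for \(C_m\), an \(m\)-th part of \(\numeral n\) for \(D_{m,n}\)—never occurs as an element of \(M\) and therefore does not disturb discreteness.

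The main obstacle is exactly this coexistence of discreteness with the required algebraic feature. A torsion-free, totally ordered discrete model can refute neither sentence, since there \(1\) is the least positive element and multiplication by \(m\) is injective; any refuting model must thus be only partially ordered and genuinely nonstandard. The rank coordinate \(\mathbb{Z}\) is the device reconciling the two demands, and the one delicate point to verify is closure of \(M\) under addition: two rank-\(\geq 1\) elements sum to rank \(\geq 2\), leaving the exotic component unconstrained, while the rank-\(0\) part is closed on its own.
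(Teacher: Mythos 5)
Your proof is correct and follows essentially the same route as the paper's source: the paper gives no proof of its own but cites Shoenfield's Theorem~2, whose argument is exactly this — reduce via the axiomatization $\mathcal{T}' \equiv \mathcal{T} + \IND{\Open(\mathcal{T})}$ (Theorem~\ref{thm:9}) and then exhibit models of $\mathcal{T}'$ with a standard chain at rank $0$ and a nonstandard region carrying $m$-torsion (for $C_m$) or an $m$-th part of $\numeral{n}$ (for $D_{m,n}$). Your instantiations $A = \mathbb{Z} \oplus \mathbb{Z}/m$ and $A = \mathbb{Z}[1/m]$ verify all axioms of $\mathcal{T}'$ (including the delicate points you flag: closure of $M$ under $+$ and well-definedness of $p$), so your write-up is a correct, self-contained reconstruction of the cited result.
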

We have now everything at hand to formulate the unprovability result.
\begin{definition}
  \label{def:1}
  Let \(m, n \in \mathbb{N}\), then the clause sets \(\mathcal{X}_{m}\) and \(\mathcal{Y}_{m,n}\) are given by
  \begin{gather*}
    \mathcal{X}_{m} \coloneqq \cnf(\skE(\mathcal{T}' + \neg C_{m})), \\
    \mathcal{Y}_{m,n} \coloneqq \cnf(\skE(\mathcal{T}' + \neg D_{m,n})).
  \end{gather*}
\end{definition}
\begin{theorem}
  \label{thm:3}
  Let \(\mathcal{S}\) be a sound saturation system and \(\mathcal{C} \in \{ \mathcal{X}_{m}, \mathcal{Y}_{m,n} \mid 0 < n < m\}\), then \(\mathcal{S} + \INDRPF{\Open(L(\mathcal{C}))}\) does not refute the clause set \(\mathcal{C}\).
\end{theorem}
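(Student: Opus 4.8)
The plan is to argue by contradiction, turning a putative refutation into the inconsistency of the representing theory and then exhibiting a model of that theory. I treat the case \(\mathcal{C} = \mathcal{X}_m\) explicitly; the case \(\mathcal{C} = \mathcal{Y}_{m,n}\) is handled verbatim, using \(D_{m,n}\) and Lemma~\ref{lem:12} in place of \(C_m\) and Lemma~\ref{lem:11}. Write \(L_0 := L(\mathcal{C})\). Since \(\neg C_m\) is an \(L(\mathcal{T})\) sentence, Skolemizing it introduces only two \emph{nullary} Skolem constants, so \(L_0 = L(\mathcal{T}) \cup \{a,b\}\) with \(a,b \in \sk^{\omega}(L(\mathcal{T}))\); in particular \(\sk^{\omega}(L_0) = \sk^{\omega}(L(\mathcal{T}))\) and the base language \(L(\mathcal{T})\) is Skolem-free. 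Suppose \(\mathcal{S} + \INDRPF{\Open(L_0)}\) refutes \(\mathcal{C} = \cnf(\skE(\mathcal{T}' + \neg C_m))\). By Lemma~\ref{lem:14}, whose only hypothesis is soundness of \(\mathcal{S}\) (exactly what we assume), the theory \(\GSI{\Open(L_0)}{\omega}{\skE(\mathcal{T}' + \neg C_m)}\) is inconsistent. It therefore suffices to show that this theory is consistent.

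To this end I build a model. By Lemma~\ref{lem:11} we have \(\mathcal{T} + \IND{\Open(\mathcal{T})} \not\vdash C_m\), and by Shoenfield's Theorem~\ref{thm:9} \(\mathcal{T}' \equiv \mathcal{T} + \IND{\Open(\mathcal{T})}\); hence \(\mathcal{T}' + \neg C_m\) has a model \(M\), an \(L(\mathcal{T})\)-structure with \(M \models \IND{\Open(\mathcal{T})}\). I expand \(M\) to an \(\sk^{\omega}(L(\mathcal{T}))\)-structure \(M^{*}\) by interpreting every Skolem symbol via a witnessing function, exactly as in the model-theoretic proof of Proposition~\ref{pro:9}, so that \(M^{*} \models \SASchema{L(\mathcal{T})}\). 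Since \(M^{*}\) and \(M\) agree on \(L(\mathcal{T})\) we have \(M^{*} \models \mathcal{T}' + \neg C_m\), and Proposition~\ref{pro:16} (applied over \(L(\mathcal{T})\)) gives \(M^{*} \models \skE(\mathcal{T}' + \neg C_m)\).

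It remains to verify the Skolemized induction axioms. Every axiom of \(\GSI{\Open(L_0)}{\omega}{\skE(\mathcal{T}' + \neg C_m)}\) other than those of \(\skE(\mathcal{T}' + \neg C_m)\) has the form \(\skE(I_x\psi(x,\vec t))\), where \(\psi(x,\vec z)\) is an open \(L_0\) formula and \(\vec t\) is a vector of ground \(\sk^{\omega}(L_0)\) terms. Writing \(\psi(x,\vec z) = \chi(x,\vec z, a, b)\) with \(\chi\) open over the Skolem-free language \(L(\mathcal{T})\), and interpreting \(a, b, \vec t\) as elements of \(M^{*}\), the sentence \(I_x\psi(x,\vec t)\) is an instance of the open induction axiom \(I_x\chi(x,\vec w)\) with the parameters \(\vec w\) set to those elements. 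Because \(M \models \IND{\Open(\mathcal{T})}\) and \(I_x\chi\) is universally closed over \(\vec w\), we obtain \(M^{*} \models I_x\psi(x,\vec t)\); Proposition~\ref{pro:16} then yields \(\SASchema{L(\mathcal{T})} \vdash I_x\psi(x,\vec t) \leftrightarrow \skE(I_x\psi(x,\vec t))\), so \(M^{*} \models \skE(I_x\psi(x,\vec t))\). Hence \(M^{*} \models \GSI{\Open(L_0)}{\omega}{\skE(\mathcal{T}' + \neg C_m)}\), contradicting the inconsistency obtained above; therefore \(\mathcal{S} + \INDRPF{\Open(L(\mathcal{C}))}\) does not refute \(\mathcal{C}\).

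The delicate point, and the reason Theorem~\ref{cor:2} cannot be invoked directly, is that \(\INDRPF{\Open(L_0)}\) operates over the non-Skolem-free language \(L_0\) and, through the grounding operator, may instantiate with arbitrary generated Skolem terms. The crux of the argument is the observation that once interpreted in the Skolem expansion \(M^{*}\), both the nullary initial constants \(a,b\) and the generated ground terms \(\vec t\) collapse into ordinary elements of \(M\); the induction axioms they parametrise are then exactly instances of open induction \emph{with parameters}, which \(M\) already satisfies by Shoenfield's theorem. The technical heart is thus establishing that \(M\) admits a Skolem expansion realising \(\SASchema{L(\mathcal{T})}\) while preserving the \(L(\mathcal{T})\)-reduct — this is precisely the content of the standard construction underlying Proposition~\ref{pro:9}, and it is where one must be careful that \(L(\mathcal{T})\) is Skolem-free and that \(\sk^{\omega}(L_0)\) coincides with \(\sk^{\omega}(L(\mathcal{T}))\).
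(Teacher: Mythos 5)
Your proof is correct, and it takes a genuinely different route from the paper's. Both arguments share the same endpoints: a refutation plus soundness of \(\mathcal{S}\) gives, via Lemma~\ref{lem:14}, the inconsistency of \(\GSI{\Open(L(\mathcal{X}_m))}{\omega}{\skE(\mathcal{T}' + \neg C_m)}\), and the contradiction ultimately comes from Shoenfield (Theorem~\ref{thm:9} with Lemma~\ref{lem:11}). The paper bridges these two points \emph{syntactically}: it applies the general conservativity lemma (Lemma~\ref{lem:20}) to turn the inconsistency into \(\SASchema{L} + \mathcal{T}' + \IND{\Open(L(\mathcal{X}_m))} \vdash C_m\), absorbs the Skolem constants into induction parameters via the identity \(\Open(L(\mathcal{X}_m)) = \Open(L(\mathcal{T}))\downarrow L(\mathcal{X}_m)\) and Lemma~\ref{lem:41}, and then strips the Skolem axioms with Proposition~\ref{pro:9} to land in the Skolem-free theory \(\mathcal{T} + \IND{\Open(\mathcal{T})}\). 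You instead contradict the inconsistency \emph{semantically}: you take a countermodel \(M \models \mathcal{T}' + \neg C_m\) (which satisfies parameterized open induction by Theorem~\ref{thm:9}), expand it to \(M^{*} \models \SASchema{L(\mathcal{T})}\) by the standard construction underlying Proposition~\ref{pro:9}, and verify each axiom \(\skE(I_x\psi(x,\vec t\,))\) by abstracting the nullary constants \(a,b\) and the ground terms \(\vec t\) into parameters of an \(L(\mathcal{T})\) induction axiom, closing with Proposition~\ref{pro:16}. In effect you inline, for this special case, the model-theoretic content of Lemma~\ref{lem:20} and of the conservativity direction (Proposition~\ref{pro:1}/Corollary~\ref{cor:1}); your crucial steps (nullary-ness of \(a,b\) permitting the abstraction into parameters, and the inapplicability of Theorem~\ref{cor:2} because \(\Open(L(\mathcal{X}_m))\) is not Skolem-free) correspond exactly to the places where the paper invokes Lemma~\ref{lem:41} and chooses Lemma~\ref{lem:20} over Theorem~\ref{cor:2}. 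What your route buys is self-containedness and a concrete, model-level demonstration that ground Skolem terms behave as induction parameters; what the paper's route buys is brevity and reusability, since Lemma~\ref{lem:20} does the work uniformly for any \(\Gamma\) and any theory, and the argument never leaves the realm of derivability statements.
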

\begin{proof}
  We consider the case for \(\mathcal{C} = \mathcal{X}_{m}\) with \(1 < m\).
  The other case is treated analogously.
  Proceed indirectly and assume that \(\mathcal{S} + \INDRPF{\Open(L(\mathcal{X}_{m}))}\) refutes \(\mathcal{X}_{m}\).
  Then by Lemma~\ref{lem:20} we have
  \[
    \SASchema{L(\mathcal{T})} + \skE(\mathcal{T}') + \IND{\Open(L(\mathcal{X}_{m}))} \vdash \skA(C_{m})
  \]
  First of all, observe that \(\skE(\mathcal{T}') = \mathcal{T}'\).
  By applying Proposition~\ref{pro:16} we obtain
  \[
    \SASchema{L(\mathcal{T})} + \skE(\mathcal{T}') + \IND{\Open(L(\mathcal{X}_{m}))} \vdash C_{m}.
  \]
  Now observe that since \(L(\mathcal{X}_{m})\) extends \(L(\mathcal{T})\) only by constants, we have \(\Open(L(\mathcal{X}_{m})) = \Open(L(\mathcal{T}))\downarrow L(\mathcal{X}_{m})\) and therefore by Lemma \ref{lem:41} we obtain
  \[
    \SASchema{L(\mathcal{T})} +  \mathcal{T}' + \IND{\Open(L(\mathcal{T}))} \vdash C_{m}.
  \]
  Hence, \(\mathcal{T}'\), \(\IND{\Open(L(\mathcal{T}))}\) and \(C_{m}\) are Skolem-free, thus, we can apply Proposition~\ref{pro:9} to obtain
  \[
    \mathcal{T}' + \IND{\Open(L(\mathcal{T}))} \vdash C_{m}.
  \]
  By Theorem \ref{thm:9} we furthermore obtain \(\mathcal{T} + \IND{\Open(L(\mathcal{T}))} \vdash C_{m}\).
  This contradicts Lemma \ref{lem:11}.
\end{proof}
This result begs the question which features a system needs in order to prove the sentences \(C_{m}\) and \(D_{m,n}\) for \(0 < n < m\).
In the following we briefly mention some extensions of the open induction schema that would allow us to overcome our unprovability results.
The extensions we suggest are purely theoretical in the sense that we do not take into account whether they can be implemented efficiently in a saturation system.
A possible extension follows from a remark by Shoenfield \cite{shoenfield1958} that \(C_{m}\) and \(D_{m,n}\) with \(0 < n < m\) can be proved with parameterized double induction (also known as simultaneous induction) on open formulas.
\begin{definition}
  \label{def:24}
  Let \(\gamma(x,y,\vec{z})\) be a formula, then the formula \(\tilde{I}_{(x,y)}\gamma\) is given by
  \begin{multline*}
    \label{eq:10}
        \left(\Forall{x}{ \gamma(x, 0, \vec{z})}
        \wedge \Forall{y}{\gamma(0, y, \vec{z})}
        \wedge \Forall{x,y}{(\gamma(x,y, \vec{z}) \rightarrow \gamma(s(x), s(y), \vec{z}))}\right)
        \\
        \rightarrow \Forall{x,y}{\gamma(x, y, \vec{z})}.
  \end{multline*}
    Let \(\Gamma\) be a set of formulas, then the double induction schema \(\IND{\Gamma}_{2}\) for \(\Gamma\) formulas is given by \(\IND{\Gamma}_{2} \coloneqq \{ \Forall{\vec{z}}{\tilde{I}_{(x,y)}}\gamma \mid \gamma(x,y,\vec{z}) \in \Gamma\}\).
\end{definition}
\begin{lemma}
  \label{lem:3}
  Let \(m,n \in \mathbb{N}\) with \(0 < n < m\), then
  \begin{enumerate}[label={(\roman*)},itemindent=1em]
    \item \(\mathcal{T} + \DIND{\Open(\mathcal{T})} \vdash C_{m}\);
    \item \(\mathcal{T} + \DIND{\Open(\mathcal{T})} \vdash D_{m, n}\).
  \end{enumerate}
\end{lemma}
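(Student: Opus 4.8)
The plan is to prove each statement, for a fixed numeral $m$ (resp.\ $m,n$), by a single application of the double induction schema $\DIND{\Open(\mathcal{T})}$ to a suitably chosen open matrix, after first developing a small amount of arithmetic inside $\mathcal{T} + \DIND{\Open(\mathcal{T})}$. Since $C_m$ and $D_{m,n}$ are individual sentences, I would argue uniformly in the meta-variable $m$: unfolding $m \cdot x = x + (x + \cdots + x)$ and $s^n$ as $n$ iterated successors, all the computations below are finite for each concrete $m,n$.

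First I would isolate the one genuinely useful double induction that $\mathcal{T}$ admits on its own, namely one whose two boundary cases are trivial and whose diagonal step is \emph{vacuous}. Concretely, applying $I_{(x,y)}\gamma$ to $\gamma(x,y) \coloneqq (x + y = 0 \rightarrow x = 0)$ works: the boundary $\forall x\,\gamma(x,0)$ collapses to $x = 0 \rightarrow x = 0$ by \eqref{ax:A:0}, the boundary $\forall y\,\gamma(0,y)$ has the trivially true consequent $0 = 0$, and in the step the antecedent of $\gamma(s(x),s(y))$ is $s(x) + s(y) = 0$, which is refuted outright since $s(x) + s(y) = s(s(x) + y)$ by \eqref{ax:A:s} and $s(\cdot) \neq 0$ by \eqref{ax:D}. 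Hence $\mathcal{T} + \DIND{\Open(\mathcal{T})} \vdash \forall x \forall y(x + y = 0 \rightarrow x = 0)$, and instantiating $x \coloneqq t$, $y \coloneqq (m-1)\cdot t$ gives the zero-product fact $m \cdot t = 0 \rightarrow t = 0$ that I will need at the boundaries. I expect to need a few further facts of this shape, obtained by the same trivial-boundary/vacuous-step pattern, describing the possible values of small sums.

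With these in hand I would attack $C_m$ by applying $I_{(x,y)}\gamma$ to $\gamma(x,y) \coloneqq (m\cdot x = m\cdot y \rightarrow x = y)$. Both boundary cases reduce, via \eqref{ax:A:0}, to the zero-product fact $m\cdot t = 0 \rightarrow t = 0$ from the previous step. For $D_{m,n}$ I would take $\gamma(x,y) \coloneqq (s^n(m\cdot x) \neq m\cdot y)$: the boundary $y = 0$ is immediate from \eqref{ax:D} because $s^n(m\cdot x)$ is a successor (here $n \geq 1$), and the boundary $x = 0$ asks that the numeral $\numeral{n}$ be no multiple of $m$, which for $0 < n < m$ I would settle using the auxiliary facts above. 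In both cases the diagonal step is what must carry the real content: one has to derive $\gamma(s(x),s(y))$ from $\gamma(x,y)$, that is, relate the value of $m\cdot s(x)$ (resp.\ $s^n(m\cdot s(x))$) to that of $m\cdot x$ and apply $s$-injectivity (Lemma~\ref{lem:33}).

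The main obstacle is precisely this diagonal step over the very weak base theory $\mathcal{T}$. The natural identity one wants, $m\cdot s(x) = s^m(m\cdot x)$, together with the left-successor law $s(a) + b = s(a+b)$, is not available: $\mathcal{T}$ proves neither commutativity nor associativity of $+$, and the left-successor law provably requires induction on the \emph{right} argument $b$, which the diagonal schema does not supply—its $x=0$ boundary would reproduce the law itself. Consequently the displayed matrices above do not literally close, and the resolution, following Shoenfield, must be to choose the open formula (allowing parameters $\vec z$, as the schema permits) so that in the step the successors always fall on the right-hand argument of $+$, where \eqref{ax:A:s} applies cleanly, while both boundary conditions still collapse to the already-established zero-sum facts. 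Finding a single parameterized open formula that makes the step reduce by \eqref{ax:A:s} and $s$-injectivity alone and simultaneously trivializes both boundaries is the delicate heart of the argument; everything else is the routine unfolding of $m\cdot(\cdot)$ and $s^n$.
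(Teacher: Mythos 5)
Your setup is sound as far as it goes: the vacuous-step double induction establishing \(x + y = 0 \rightarrow x = 0\) is correct, the reduction of the boundary cases to such facts (plus numeral computations) works, and your diagnosis that the diagonal steps of the naive matrices do not close over \(\mathcal{T}\) alone---because \(m\cdot s(x) = s^{m}(m\cdot x)\) and the left-successor law \(s(a)+b = s(a+b)\) are not available there---is accurate. But the proposal stops exactly at that point: you never exhibit an induction formula whose diagonal step does close; you only assert that some parameterized open formula ``following Shoenfield'' must exist. Since that step is the entire content of the lemma, this is a genuine gap, not a routine omission.

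Moreover, the structural claim on which you base your search---that the diagonal schema ``does not supply'' induction on a single variable, so that the left-successor law is out of reach---is false, and seeing why it is false is precisely what completes the proof. For any open \(\varphi(x,\vec{z})\), apply \(I_{(x,y)}\) to the open formula \(\gamma(x,y,\vec{z}) \coloneqq (x = y \rightarrow \varphi(x,\vec{z}))\): both boundary conditions follow from \(\varphi(0,\vec{z})\), and the diagonal step follows from the ordinary induction step of \(\varphi\) together with injectivity of \(s\) (Lemma~\ref{lem:33}); instantiating \(y \coloneqq x\) in the conclusion yields \(\forall x\,\varphi(x,\vec{z})\). Hence \(\mathcal{T} + \DIND{\Open(\mathcal{T})} \vdash \IND{\Open(\mathcal{T})}\), and therefore by Theorem~\ref{thm:9} it proves \(\mathcal{T}'\), in particular \eqref{B:1} and commutativity \eqref{B:2}. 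With \eqref{B:2} and \eqref{ax:A:s} one obtains \(m\cdot s(x) = s^{m}(m\cdot x)\) for each fixed \(m\), and now exactly the two matrices you wrote down do close: for \(C_{m}\) the step reads \(s^{m}(m\cdot x) = s^{m}(m\cdot y) \rightarrow m\cdot x = m\cdot y\) by \(s\)-injectivity followed by the induction hypothesis, and for \(D_{m,n}\) it is the contrapositive \(s^{n+m}(m\cdot x) = s^{m}(m\cdot y) \rightarrow s^{n}(m\cdot x) = m\cdot y\); the boundaries are handled by \eqref{B:1} together with your zero-sum facts. So the missing idea is not a delicate choice of a single exotic formula, but the observation that parameterized double induction subsumes parameterized single induction, after which your original plan goes through essentially verbatim.
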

\begin{proof}
  Straightforward.
\end{proof}
The second possibility is to extend the induction rule used by the system at least to \(\forall_{1}\) formulas without parameters.
\begin{lemma}
  \label{lem:2}
  Let \(m,n \in \mathbb{N}\) with \(0 < n < m\), then
  \begin{enumerate}[label={(\roman*)},itemindent=1em]
  \item \(\mathcal{T} + \INDParameterFree{\forall_{1}(\mathcal{T})} \vdash \mathcal{T}'\); \label{lem:2:1}
  \item \(\mathcal{T} + \INDParameterFree{\forall_{1}(\mathcal{T})} \vdash C_{m}\); \label{lem:2:2}
  \item \(\mathcal{T} + \INDParameterFree{\forall_{1}(\mathcal{T})} \vdash D_{m,n}\). \label{lem:2:3}
  \end{enumerate}
\end{lemma}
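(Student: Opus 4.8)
The plan is to prove the three claims in the order listed, using part~\ref{lem:2:1} to secure all of \(\mathcal{T}'\) first and then reasoning inside \(\mathcal{T}'\) for parts~\ref{lem:2:2} and~\ref{lem:2:3}. Note that Theorem~\ref{thm:9} does not apply directly, since it concerns \emph{parametered} open induction whereas \(\IND{\forall_{1}^{-}(\mathcal{T})}\) only supplies \emph{parameter-free} induction on \(\forall_{1}\) formulas. The recurring device that bridges this gap is to absorb a would-be induction parameter into a leading universal quantifier: in place of an open induction \(I_{x}\varphi(x,y)\) with parameter \(y\), one performs the parameter-free induction on the single-variable \(\forall_{1}\) formula \(\forall y\,\varphi(x,y)\in\forall_{1}^{-}(\mathcal{T})\). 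This is legitimate precisely when the induction step goes through uniformly in \(y\), which will hold in every case below. Verifying this uniformity is the conceptual heart of the argument and the one place where parameter-free \(\forall_{1}\) induction could a priori fail to match parametered open induction.

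For part~\ref{lem:2:1} I would establish \eqref{B:1}--\eqref{B:4} individually. Axiom \eqref{B:1} follows from parameter-free open induction on \(x = 0 \vee x = s(p(x))\), using \eqref{ax:P:s} in the step. For the additive axioms I would first record the auxiliary facts \(0 + x = x\) (open induction on \(x\), via \eqref{ax:A:0} and \eqref{ax:A:s}) and \(\forall x\,\forall y\,(s(y) + x = s(y + x))\) (parameter-free induction on the \(\forall_{1}\) formula \(\forall y\,(s(y) + x = s(y + x))\)). Commutativity \eqref{B:2} then follows by induction on \(\forall x\,(x + y = y + x)\), associativity \eqref{B:3} by induction on \(\forall x\,\forall y\,((x + y) + z = x + (y + z))\), and cancellation \eqref{B:4} by induction on \(\forall y\,\forall z\,(x + y = x + z \rightarrow y = z)\), whose successor step uses the auxiliary lemma followed by Lemma~\ref{lem:33}. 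In each case the matrix is quantifier-free, so the induction formula indeed lies in \(\forall_{1}^{-}(\mathcal{T})\).

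Reasoning now in \(\mathcal{T}'\), I would first note the purely equational facts \(m \cdot 0 = 0\) and \(m \cdot s(w) = s^{m}(m \cdot w)\); since \(m\) is a fixed numeral these are finite consequences of \eqref{ax:A:0}, \eqref{ax:A:s} and \eqref{B:2}, needing no induction. For part~\ref{lem:2:2} I would prove \(C_{m}\) by parameter-free induction on the \(\forall_{1}\) formula \(\varphi(x) \coloneqq \forall y\,(m \cdot x = m \cdot y \rightarrow x = y)\): the base case uses \(m \cdot s(w) = s^{m}(\cdots) \neq 0\) (via \eqref{ax:D}) together with \eqref{B:1} to force \(y = 0\), and the step rewrites \(m \cdot s(x) = s^{m}(m \cdot x)\), writes \(y = s(p(y))\) by \eqref{B:1}, cancels the \(m\) outer successors using Lemma~\ref{lem:33}, and applies the induction hypothesis at \(p(y)\). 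For part~\ref{lem:2:3} I would prove \(D_{m,n}\) analogously by parameter-free induction on \(\psi(x) \coloneqq \forall y\,(s^{n}(m \cdot x) \neq m \cdot y)\); here the hypothesis \(0 < n < m\) enters when cancelling successors, yielding an equation of the form \(s^{m-n}(\cdots) = 0\) with \(m - n > 0\), which contradicts \eqref{ax:D}.

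I expect the only real difficulty to be the bookkeeping in these uniform-in-\(y\) steps: one must check that the reduction via \eqref{B:1} to the predecessor \(p(y)\) and the ensuing appeal to the induction hypothesis genuinely hold for an arbitrary \(y\), so that the quantified formula \(\forall y\,\varphi(x,y)\) (respectively \(\forall y\,\psi(x,y)\)) is preserved from \(x\) to \(s(x)\). Once this uniformity is confirmed, everything else reduces to routine equational manipulation in \(\mathcal{T}'\).
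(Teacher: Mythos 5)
Your proposal is correct and takes essentially the same route as the paper: for parts~\ref{lem:2:2} and~\ref{lem:2:3} the paper performs exactly the same parameter-free induction on \(\forall y\,(n \cdot x = n \cdot y \rightarrow x = y)\) (respectively \(\forall y\,(s^{n}(m \cdot x) \neq m \cdot y)\)), with the same use of \eqref{B:1}, \eqref{ax:D}, and Lemma~\ref{lem:33} to strip successors and apply the induction hypothesis at \(p(y)\). The paper leaves part~\ref{lem:2:1} as an exercise, and your derivation of \eqref{B:1}--\eqref{B:4} by absorbing the would-be parameters into leading universal quantifiers is a correct way to fill it in.
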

\begin{proof}
  The proof of~\ref{lem:2:1} is left as an exercise.
  For~\ref{lem:2:2} we work in \(\mathcal{T} + \INDParameterFree{\forall_{1}(\mathcal{T})}\) and proceed by induction on the formula \(\Forall{y}{(m \cdot x = m \cdot y \rightarrow x = y)}\).
  For the base case we have to show that \(m \cdot 0 = m \cdot y \rightarrow 0 = y\).
  By Lemma~\ref{pro:4} we have \(m \cdot 0 = 0\).
  By \(\eqref{B:1}\) we need to distinguish two cases.
  If \(y = 0\), then we are done, otherwise we obtain a contradiction by \(\eqref{ax:D}\).
  For the induction step we assume \(\Forall{y}{\left(m \cdot x = m \cdot y \rightarrow x = y\right)}\) and \(m \cdot s(x) = m \cdot y\).
  We want to obtain \(s(x) = y\).
  By \(\eqref{ax:A:s}\) and \(\eqref{B:2}\) we obtain \(s^{m}(m \cdot x) = m \cdot s(x) = m \cdot y\).
  By \(\eqref{B:1}\) we can distinguish two cases.
  If \(y = 0\), then by~\ref{pro:4} we \(s^{m}(m \cdot x) = 0\), which contradicts \(\eqref{ax:D}\).
  Hence by Lemma~\ref{lem:33} we have \(m \cdot x = m \cdot p(y)\) and it suffices to show \(x = p(y)\).
  By the induction hypothesis we have \(m \cdot x = m \cdot p(y) \rightarrow x = p(y)\).
  Thus we obtain \(x = p(y)\).

  For~\ref{lem:2:3} we proceed analogously.
\end{proof}
Shoenfield has shown the following interesting theorem.
\begin{theorem}[\cite{shoenfield1958}]
  \label{thm:5}
  \(\mathcal{T}' + \{ C_{m} \mid 1 < m \} + \{ D_{m,n} \mid 0 < n < m\}\) is complete for quantifier-free formulas.
\end{theorem}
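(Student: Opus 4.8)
The plan is to reconstruct Shoenfield's argument, reducing ``completeness for quantifier-free formulas'' to a decision procedure for purely existential sentences. Write \(T_{*}\) for the theory \(\mathcal{T}' + \{ C_{m} \mid 0 < m \} + \{ D_{m,n} \mid 0 < n < m \}\). Unfolding Definition~\ref{def:7}, completeness for quantifier-free formulas means \(\mathrm{Th}(\mathbb{N}) \sqsubseteq_{\Open} T_{*}\), i.e.\ \(T_{*}\) proves the universal closure of every open formula true in \(\mathbb{N}\). Since every axiom of \(T_{*}\) is true in \(\mathbb{N}\), the theory is sound, so the only nontrivial task is the inclusion from true to provable. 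Dualizing, this is equivalent to showing that \(T_{*}\) \emph{decides} every \(\exists_{1}\) sentence. The ``true'' half is easy: if \(\mathbb{N} \models \exists \vec{x}\, \gamma(\vec{x})\) with \(\gamma\) open, pick numeral witnesses \(\numeral{\vec{k}}\); then \(\gamma(\numeral{\vec{k}})\) is a true ground formula, hence provable already in \(\mathcal{T}\) since \(\mathcal{T}\) decides ground formulas (Proposition~\ref{pro:4}), and \(\exists \vec{x}\, \gamma\) follows in \(T_{*} \supseteq \mathcal{T}\). So the whole content lies in refuting the \(\exists_{1}\) sentences that are false in \(\mathbb{N}\).

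First I would put an arbitrary \(\exists_{1}\) sentence into normal form over \(\mathcal{T}'\). Prenexing and passing to disjunctive normal form reduces the matrix to a disjunction of systems of equations and disequations between terms; using commutativity, associativity and cancellation \eqref{B:2}, \eqref{B:3}, \eqref{B:4} together with \eqref{ax:A:0}, \eqref{ax:A:s}, \eqref{ax:P:0}, \eqref{ax:P:s} and \eqref{B:1}, every term reduces, up to case distinctions on whether subterms vanish, to the shape \(s^{a}(\sum_{i} c_{i} \cdot x_{i})\) with \(a \in \mathbb{N}\) and nonnegative integer coefficients \(c_{i}\). Thus each disjunct becomes a finite system of linear equalities and disequalities over the variables, and it suffices to decide, provably in \(T_{*}\), the solvability of such a system.

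The heart of the proof is this decision step, carried out by eliminating the existential block and reducing solvability to a variable-free condition that \(\mathcal{T}\) already decides. A single equation \(s^{a}(\sum_{i} c_{i} x_{i}) = s^{b}(\sum_{j} d_{j} y_{j})\) is solvable in \(\mathbb{N}\) exactly when a gcd/residue condition holds, and the role of the extra axioms is precisely to make this condition available at the quantifier-free level: \(C_{m}\) lets us cancel a common multiplier \(m\) from both sides of an equation, while \(D_{m,n}\) refutes an equation asserting that a multiple of \(m\) exceeds another multiple of \(m\) by a residue \(n\) with \(0 < n < m\). Iterating the elimination, every system is reduced to a ground formula, which \(\mathcal{T}\) decides; when the original system is unsolvable in \(\mathbb{N}\) the reduction terminates in a ground falsehood, yielding the desired refutation.

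The main obstacle is exactly this elimination step: one must verify that the families \(\{ C_{m} \}\) and \(\{ D_{m,n} \mid 0 < n < m \}\) supply \emph{precisely} the divisibility and congruence facts of \(\mathbb{N}\) needed to decide arbitrary linear systems — this is where the analogue of Presburger's congruence reasoning has to be replayed quantifier-free, without a primitive order or congruence predicate, relying only on cancellation and the two axiom families. A secondary technical nuisance is the truncation \(p(0) = 0\), which prevents terms from forming a clean module over \(\mathbb{Z}\) on the standard part and forces a case distinction between the standard and the genuinely invertible behaviour of \(s\) to be threaded through both the normalization and the elimination. Once these are handled, combining the refutation of false \(\exists_{1}\) sentences with the witness argument above shows that \(T_{*}\) decides every \(\exists_{1}\) sentence, which is the claimed completeness for quantifier-free formulas.
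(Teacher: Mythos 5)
First, a remark on the comparison itself: the paper does not prove Theorem~\ref{thm:5} at all --- it is imported from Shoenfield \cite{shoenfield1958} by citation --- so your proposal must be judged as a free-standing reconstruction. Its outer layer is sound: completeness for quantifier-free formulas is indeed equivalent to refuting every \(\exists_{1}\) sentence false in \(\mathbb{N}\) (the true half being Proposition~\ref{pro:4}), and the normalization of \(p\)-free terms to the shape \(s^{a}(c_{1}\cdot x_{1} + \cdots + c_{k}\cdot x_{k})\) via \eqref{B:2}--\eqref{B:4}, with \eqref{B:1} supplying the case splits needed to eliminate \(p\) under the existential block, is correct.

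The genuine gap is the step you yourself call ``the heart of the proof''. You never carry out the elimination; you only assert that iterating it terminates in a ground formula, and the one concrete claim you make about it is false over \(\mathbb{N}\): solvability of \(s^{a}(\sum_{i} c_{i} x_{i}) = s^{b}(\sum_{j} d_{j} y_{j})\) is \emph{not} characterized by a gcd/residue condition. A gcd criterion is adequate only when variables with nonzero coefficients occur on both sides; when they effectively sit on one side, solvability is membership in a numerical semigroup (a Frobenius-type condition). For instance \(3\cdot x + 5\cdot y = \numeral{7}\) has coefficient gcd \(1\) dividing \(7\), yet no solution in \(\mathbb{N}\); its refutation in your \(T_{*}\) does not read off any residue but instead branches on \(x = 0 \vee x = s(p(x))\) and \(y = 0 \vee y = s(p(y))\), and only in the resulting branches invokes \eqref{ax:D}, \(D_{3,1}\) and \(D_{5,2}\) --- a mechanism, together with a termination measure for its iteration, that your outline never describes. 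Similarly, for systems of several equations and disequations the refutation must provably combine equations (scaling both sides, cancelling with \eqref{B:4} and \(C_{m}\)) and, in the case where the solution set of the equational part is entirely covered by the hyperplanes \(t_{i} = u_{i}\) arising from the disequations, derive the disjunction \(\bigvee_{i} t_{i} = u_{i}\) from the equations alone; this is precisely the Presburger-style congruence reasoning you flag as ``the main obstacle'' and then leave unverified. As it stands, the proposal is a plausible strategy whose mathematically hard core is missing, and whose one quantitative claim about that core is incorrect; it does not yet constitute a proof of the theorem.
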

From this it follows that at least in the setting of linear arithmetic double induction and parameter-free \(\forall_{1}\) induction are sufficient to prove all true quantifier-free formulas.

In a similar way to what we did in this section we obtain many more unprovability results by using independence results of Shepherdson \cite{shepherdson1964} and Schmerl \cite{schmerl1988}.
However, these results are formulated in the language that besides the symbols of linear arithmetic contains the symbols \(\dot{-}/2\) and \(\cdot/2\) for the truncated subtraction and multiplication, respectively.
The properties that are shown independent of the base theory with open induction express slightly more complicated properties such as the irrationality of the square root of two, Fermat's last theorem for \(n = 3\), and similar diophantine equations.
Hence, these independence results are currently less practically realistic.

\subsection{Literal induction: a case study}
\label{sec:unprovability:literal_induction_a_case_study}
In the previous section we have provided unprovability results for sound saturation systems that are extended by the rule \(\INDRPF{\Open(L)}\), where \(L\) is a Skolem-free language.
In this section we will show that these results apply to the concrete systems described in \cite{reger2019,hajdu2020}.

In \cite{reger2019} Reger and Voronkov describe an \ac{AITP} system that extends a sound saturation-based proof system by the induction rule
\begin{equation*}
  \begin{prooftree}
    \hypo{\{\overline{l}(a) \vee C\} \cup \mathcal{C}}
    \infer1[\(\LiteralAINDR_{1}\)]{\cnf(\neg\skA(l(0) \wedge \Forall{x}{(l(x) \rightarrow l(s(x)))})) \vee C}
  \end{prooftree}
\end{equation*}
where \(a\) is a constant, \(l(x)\) is a \emph{literal free of \(a\)}, and \(\overline{l}(a)\) ground.
We informally refer to this induction rule as the first analytical literal induction rule.
Basically, this induction rule operates as follows:
Whenever a clause of the form \(\overline{l}(a) \vee C\) is encountered, then the rule generates the clauses corresponding to the induction axiom \(I_{x}l(x)\) and immediately resolves these against \(\overline{l}(a) \vee C\).
In a practical implementation the rule will not apply to every clause of the form \(\overline{l}(a) \vee C\) but only when some additional conditions are satisfied.
We call this induction rule analytical because an induction is carried out only for literals that actually are generated during the saturation process.
The motivation for choosing the very restricted induction rule \(\LiteralAINDR_{1}\) is to solve problems that require ``little'' induction reasoning and complex first-order reasoning \cite{reger2019}.
In particular the induction rule is chosen so as to not generate too many clauses, which otherwise would  potentially result in performance issues.
Empirical observations \cite{hajdu2020}, however, suggest that this method is unable to deal even with very simple yet practically relevant problems such as
\[
  x + (x + x) = (x + x) + x.
\]
In order to relax the overly restricting analyticity, \cite{hajdu2020} introduces the following induction rule:
\begin{equation*}
  \begin{prooftree}
    \hypo{\{\overline{l}(a) \vee C\} \cup \mathcal{C}}
    \infer1[\(\LiteralAINDR_{2}\)]{\cnf(\neg \skA\left(l(0) \wedge \Forall{x}{(l(x) \rightarrow l(s(x)))}\right)) \vee C}
  \end{prooftree}
\end{equation*}
where \(\overline{l}(x)\) is a literal, \(a\) is a constant such that \(l(a)\) is ground.
This rule reduces the degree of analyticity by allowing induction to be carried out on slight generalizations of the currently derived literals.
This results in more possibilities to add induction axioms to the search space and thus makes search more difficult, but the degree of analyticity of the induction is reduced sufficiently to make the method able to prove some challenging formulas such as for example \(x + (x + x) = (x + x) + x\) (See \cite{hajdu2020} for details).
It is clear that the rule \(\LiteralAINDR_{2}\) is at least as strong as the rule \(\LiteralAINDR_{1}\).
Hence we will in the following concentrate on the rule \(\LiteralAINDR_{2}\).

In the next step we will show how the induction rule \(\LiteralAINDR_{2}\) can be expressed in terms of the restricted induction rule given in Definition \ref{def:8}.
The proof proceeds in three steps: First we extract the induction axioms that are introduced with \(\LiteralAINDR_{2}\); secondly, we derive these induction axiom with the induction rule of Definition \ref{def:8}; finally, we use first-order inferences to reconstruct a refutation.
\begin{lemma}
  \label{lem:7}
  Let \(\mathcal{S}\) be a sound saturation system and \(\mathcal{D}_{0}, \dots, \mathcal{D}_{n}\) and \(\mathcal{S} + \LiteralAINDR_{2}\) deduction.
  There exist \(L(\mathcal{D}_{0})\) literals
  \[
    \left(l_{i}(x, c_{1}, \dots, c_{i - 1})\right)_{i = 1, \dots, k},
  \]
  where \(n < k\) and \(c_{j} = \mathfrak{s}_{\Forall{x}{(l_{j}(x, c_{1}, \dots, c_{j - 1}) \rightarrow l_{j}(s(x), c_{1}, \dots, c_{j - 1}))}}\) for \(0 < j \leq k \), such that \(L(\mathcal{D}_{n}) \subseteq L(\mathcal{D}_{0}) \cup \{ c_{1}, \dots, c_{k}\} \) and
  \[
    \mathcal{D}_{0} \cup \skE(\{I_{x}l_{i}(x,c_{1}, \dots, c_{i-1}) \mid 0 < i \leq k \}) \models \mathcal{D}_{n}.
  \]
\end{lemma}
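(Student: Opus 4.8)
The plan is to proceed by induction on the length $n$ of the deduction, in the spirit of the proofs of Lemmas~\ref{lem:4} and~\ref{lem:14}: at each stage I record the induction literals used so far together with their associated induction Skolem constants, and I maintain two invariants for the prefix $\mathcal{D}_{0}, \dots, \mathcal{D}_{j}$, namely (i) $L(\mathcal{D}_{j}) \subseteq L(\mathcal{D}_{0}) \cup \{ c_{1}, \dots, c_{k_{j}} \}$ and (ii) $\mathcal{D}_{0} \cup \skE(\{ I_{x}l_{i}(x, c_{1}, \dots, c_{i-1}) \mid 0 < i \leq k_{j} \}) \models \mathcal{D}_{j}$. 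Here each $l_{i}$ is an $L(\mathcal{D}_{0})$ literal, and by the canonical naming scheme (Definition~\ref{def:3}) the constants are determined by $c_{i} = \mathfrak{s}_{\forall x (l_{i}(x, c_{1}, \dots, c_{i-1}) \rightarrow l_{i}(s(x), c_{1}, \dots, c_{i-1}))}$, so that fixing the literals fixes the constants. For the base case $j = 0$ the empty sequence ($k_{0} = 0$) satisfies both invariants trivially.

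For the inductive step I examine how $\mathcal{D}_{j+1} = \mathcal{D}_{j} \cup \mathcal{B}_{j}$ is produced. If $\mathcal{B}_{j}$ comes from a rule of $\mathcal{S}$, then soundness of $\mathcal{S}$ gives $L(\mathcal{B}_{j}) \subseteq L(\mathcal{D}_{j})$ and $\mathcal{D}_{j} \models \mathcal{B}_{j}$, so both invariants persist with the sequence unchanged. If $\mathcal{B}_{j}$ is produced by $\LiteralAINDR_{2}$, then $\mathcal{B}_{j} = \cnf(\skA(\dots)) \vee C$ is triggered by a clause $\overline{l}(a,a) \vee C \in \mathcal{D}_{j}$, where $l(x,a)$ is the literal on which the rule inducts and $a$ is a constant with $l(a,a)$ ground. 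By invariant (i), $a \in L(\mathcal{D}_{0}) \cup \{ c_{1}, \dots, c_{k_{j}} \}$, so $l(x,a)$ is the instance $l_{k_{j}+1}(x, c_{1}, \dots, c_{k_{j}})$ of an $L(\mathcal{D}_{0})$ literal, and I append $l_{k_{j}+1} \coloneqq l$ together with $c_{k_{j}+1} \coloneqq \mathfrak{s}_{\forall x (l(x,a) \rightarrow l(s(x),a))}$. This is exactly the induction Skolem constant the rule introduces, and it matches the required canonical name by construction; since it is the only new symbol, invariant (i) is preserved.

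The crux is invariant (ii) in this case. One checks that the clauses of $\mathcal{B}_{j}$ are, up to clausification, obtained by resolving the consequent of $\skE(I_{x}l(x,a))$ against the triggering clause: indeed $\skE(I_{x}l(x,a))$ equals $l(0,a) \wedge (l(c,a) \rightarrow l(s(c),a)) \rightarrow \forall x\, l(x,a)$ with $c = c_{k_{j}+1}$, and in any model of both this axiom and $\overline{l}(a,a) \vee C$, whenever $C$ fails the literal $\overline{l}(a,a)$ refutes $\forall x\, l(x,a)$, forcing the Skolemized antecedent $l(0,a) \wedge (l(c,a) \rightarrow l(s(c),a))$ to fail, which is exactly what $\mathcal{B}_{j}$ asserts. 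Hence $\{ \overline{l}(a,a) \vee C \} \cup \{ \skE(I_{x}l(x,a)) \} \models \mathcal{B}_{j}$, and combining this with the induction hypothesis yields $\mathcal{D}_{0} \cup \skE(\{ I_{x}l_{i} \mid 0 < i \leq k_{j}+1 \}) \models \mathcal{D}_{j+1}$. Finally, to meet the requirement $n < k$, I observe that appending arbitrary further $L(\mathcal{D}_{0})$ literals with their canonical constants preserves both invariants, since extra induction axioms only strengthen the left-hand theory and extra constants only enlarge the admissible language; so after processing $\mathcal{D}_{n}$ I pad the sequence to any length $k > n$.

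The main obstacle is the bookkeeping at the $\LiteralAINDR_{2}$ step: one must match precisely the clauses $\cnf(\skA(\dots)) \vee C$ generated by the rule against the existential Skolem form $\skE(I_{x}l(x,a))$, and verify that the Skolem constant the rule introduces coincides with the canonically named $c_{k_{j}+1}$ demanded by the statement. Because both the rule and Definition~\ref{def:4} use the same canonical naming scheme, and because the consequent of $\skE(I_{x}l(x,a))$ is $\forall x\, l(x,a)$ while its Skolemized antecedent is $l(0,a) \wedge (l(c,a) \rightarrow l(s(c),a))$ with $c = c_{k_{j}+1}$, this matching goes through; everything else reduces to the routine induction-on-length argument already employed for Lemmas~\ref{lem:4} and~\ref{lem:14}.
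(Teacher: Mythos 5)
Your proof is correct and takes essentially the same route as the paper's: induction on the length of the deduction, maintaining the language-containment and entailment invariants, abstracting the induction Skolem constants occurring in the triggering literal into parameters of an \(L(\mathcal{D}_{0})\) literal, and closing the \(\LiteralAINDR_{2}\) case by noting that the added clauses follow from \(\skE(I_{x}l(x,a))\) together with the triggering clause. If anything, you are more careful than the paper on two points: you explicitly verify that the rule's output \(\cnf(\skA(\cdots)) \vee C\) is semantically entailed by the Skolemized induction axiom plus the clause \(\overline{l}(a,a) \vee C\), and you secure the stated bound \(n < k\) by padding the literal sequence, whereas the paper's own proof asserts the inconsistent inequality \(k + 1 < n + 1\) (which also fails at its base case), a garbled detail that your padding argument repairs.
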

\begin{proof}
  We start with a straightforward observation.
  Since the induction rule \(\LiteralAINDR_{2}\) introduces only nullary Skolem symbols, every literal appearing in an \(\mathcal{S} + \LiteralAINDR_{2}\) deduction from \(\mathcal{D}_{0}\) is of the form \(l(\vec{x},\vec{c})\), where \(\vec{c}\) is a vector of induction Skolem symbols and \(L(l(\vec{x},\vec{y})) \subseteq L(\mathcal{D}_{0})\).

  Now let us proceed by induction on the length \(n\) of the deduction from \(\mathcal{D}_{0}\).
  If \(n = 0\), then clearly we are done.
  Now assume that the claim holds up to \(\mathcal{D}_{n}\) and consider \(\mathcal{D}_{n + 1}\).
  If \(\mathcal{D}_{n + 1}\) is derived by from \(\mathcal{D}_{n}\) by an inference from \(\mathcal{S}\), then by the soundness of \(\mathcal{S}\) we have \(L(\mathcal{D}_{n + 1}) \subseteq L(\mathcal{D}_{n})\) and \(\mathcal{D}_{n} \models \mathcal{D}_{n + 1}\).
  Hence, we are done by applying the induction hypothesis to \(\mathcal{D}_{n}\).
  If \(\mathcal{D}_{n + 1}\) is derived from \(\mathcal{D}_{n}\) by \(\LiteralAINDR_{2}\), then \(\mathcal{D}_{n}\) contains a clause \(\overline{l}(a,c_{1}, \dots, c_{k}) \vee C\) and \(\mathcal{D}_{n + 1} = \mathcal{D}_{n} \cup \cnf(\skE(I_{x}l(x, c_{1}, \dots, c_{k}))\) and moreover \(L(\mathcal{D}_{n}) \subseteq L(\mathcal{D}_{0}) \cup \{ 1, \dots, n \}\).
  We let \(l_{k + 1}(x, \vec{y}) \coloneqq l(x, \vec{y})\) and \(c_{k + 1} = \mathfrak{s}_{\Forall{x}{(l_{k}(x, c_{1}, \dots, c_{k}) \rightarrow l_{k}(s(x), c_{1}, \dots, c_{k}))}}\).
  Hence we have \(\mathcal{D}_{n} \cup \{ \skE(I_{x}l_{k + 1}(x, c_{1}, \dots, c_{k})) \} \models \mathcal{D}_{n + 1} \) and \(L(\mathcal{D}_{n + 1}) = L(\mathcal{D}_{n}) \cup \{ c_{k + 1} \}\).
  Therefore, we have \(k + 1 < n + 1\) and \(L(\mathcal{D}_{n + 1}) \subseteq L(\mathcal{D}_{0}) \cup \{ c_{1}, \dots, c_{k + 1}\}\) and
  \[
    \mathcal{D}_{0} \cup \skE\left( \{ I_{x}l_{i}(x, c_{1}, \dots, c_{i - 1}) \mid 0 < i \leq k + 1\} \right) \models \mathcal{D}_{n + 1}. \qedhere
  \]
\end{proof}
\begin{proposition}
  \label{pro:15}
  Let \(\mathcal{S}\) be a sound and refutationally complete saturation system and \(T\) a theory.
  If \(\mathcal{S} + \LiteralAINDR_{2}\) refutes \(\cnf(\skE(T))\), then the saturation system \(\mathcal{S} + \INDRPF{\Literal(L(\skE(T)))}\) refutes \(\cnf(\skE(T))\).
\end{proposition}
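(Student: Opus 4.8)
The plan is to reduce an $\mathcal{S} + \LiteralAINDR_{2}$ refutation to the inconsistency of the theory $\GSI{\Literal(L(\skE(T)))}{\omega}{\skE(T)}$ and then invoke the characterization of Proposition~\ref{pro:5}. Assume $\mathcal{S} + \LiteralAINDR_{2}$ refutes $\cnf(\skE(T))$ and fix a refutation $\mathcal{D}_{0}, \dots, \mathcal{D}_{n}$ with $\mathcal{D}_{0} = \cnf(\skE(T))$ and $\Box \in \mathcal{D}_{n}$. First I would apply Lemma~\ref{lem:7} to this deduction in order to extract literals $l_{i}(x, c_{1}, \dots, c_{i-1})$ for $i = 1, \dots, k$, where each $c_{j} = \mathfrak{s}_{\forall x (l_{j}(x, c_{1}, \dots, c_{j-1}) \rightarrow l_{j}(s(x), c_{1}, \dots, c_{j-1}))}$, satisfying
\[
  \mathcal{D}_{0} \cup \skE(\{ I_{x}l_{i}(x, c_{1}, \dots, c_{i-1}) \mid 0 < i \leq k \}) \models \mathcal{D}_{n}.
\]
Since $\Box \in \mathcal{D}_{n}$, the left-hand set is inconsistent.

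The key step is to show that each Skolemized induction axiom $\skE(I_{x}l_{i}(x, c_{1}, \dots, c_{i-1}))$ already belongs to $\GSI{\Literal(L(\skE(T)))}{\omega}{\skE(T)}$, which I would do by induction on $i$. By Lemma~\ref{lem:7} the underlying template $l_{i}(x, \vec{z})$ uses only symbols of $L(\mathcal{D}_{0}) = L(\skE(T))$, hence $l_{i}(x, \vec{z}) \in \Literal(L(\skE(T)))$, and the induction Skolem constants $c_{1}, \dots, c_{i-1}$ occur only in the ground instantiation $\vec{t} = (c_{1}, \dots, c_{i-1})$. The crucial point is the dependency ordering built into the naming scheme: $c_{j}$ depends only on $c_{1}, \dots, c_{j-1}$, and Skolemizing $I_{x}l_{j}(x, c_{1}, \dots, c_{j-1})$ introduces precisely the constant $c_{j}$ (the step-case universal quantifier is Skolemized under $\skA$, yielding the nullary symbol $c_{j}$, while the conclusion retains its universal quantifier). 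Thus, assuming inductively that $c_{1}, \dots, c_{i-1}$ have entered the language at earlier stages, the grounding operator of Definition~\ref{def:23} makes $l_{i}(x, c_{1}, \dots, c_{i-1})$ available in $(\Literal(L(\skE(T))) \downarrow \GSI{\Literal(L(\skE(T)))}{i-1}{\skE(T)})^{-}$, so that $\skE(I_{x}l_{i}(x, c_{1}, \dots, c_{i-1})) \in \GSI{\Literal(L(\skE(T)))}{i}{\skE(T)}$ and $c_{i}$ is introduced at stage $i$.

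Having placed all extracted induction axioms into $\GSI{\Literal(L(\skE(T)))}{\omega}{\skE(T)}$, and since this theory also contains $\skE(T)$ (which is equivalent to $\mathcal{D}_{0}$), it proves everything that the inconsistent set $\mathcal{D}_{0} \cup \skE(\{ I_{x}l_{i}(x, c_{1}, \dots, c_{i-1}) \mid 0 < i \leq k\})$ proves; hence $\GSI{\Literal(L(\skE(T)))}{\omega}{\skE(T)}$ is inconsistent. By Proposition~\ref{pro:5}, $\mathcal{S} + \INDRPF{\Literal(L(\skE(T)))}$ refutes $\cnf(\skE(T))$, and since the target system $\mathcal{S} + \mathrm{Res} + \INDRPF{\Literal(L(\skE(T)))}$ contains all of its inference rules, the same deduction witnesses the claim. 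Alternatively, one can avoid Proposition~\ref{pro:5} and reconstruct the refutation directly, applying $\INDRPF{\Literal(L(\skE(T)))}$ in the order $i = 1, \dots, k$ to build the inconsistent clause set $\mathcal{D}_{0} \cup \bigcup_{i} \cnf(\skE(I_{x}l_{i}(x, c_{1}, \dots, c_{i-1})))$ and finishing by the refutational completeness of $\mathcal{S}$.

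I expect the main obstacle to be the bookkeeping in the inductive step: one must verify that the specific Skolem names produced by $\LiteralAINDR_{2}$ and recorded in Lemma~\ref{lem:7} coincide with those produced when Skolemizing the parameter-free induction axioms introduced by $\INDRPF{\Literal(L(\skE(T)))}$, and that the stage-by-stage growth of the language under the $\GSI{\cdot}{\cdot}{\cdot}$ operator respects the dependency order of the constants, so that every required ground term is already present at the moment its induction axiom is introduced.
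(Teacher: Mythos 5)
Your proposal is correct and follows essentially the same route as the paper: both rest on Lemma~\ref{lem:7} to extract the parameter-free induction axioms \(\skE(I_{x}l_{i}(x,c_{1},\dots,c_{i-1}))\) and on the observation that the constants \(c_{j}\) enter the language in dependency order, so that \(\INDRPF{\Literal(L(\skE(T)))}\) can introduce these axioms stage by stage before refutational completeness of \(\mathcal{S}\) finishes the refutation. The only difference is packaging: the paper reconstructs the refutation directly (your ``alternative'' ending), whereas your main argument detours through the inconsistency of \(\GSI{\Literal(L(\skE(T)))}{\omega}{\skE(T)}\) and Proposition~\ref{pro:5}, which amounts to the same construction, since the relevant direction of Proposition~\ref{pro:5} (Lemma~\ref{lem:15}) performs exactly that reconstruction.
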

\begin{proof}
  Assume that \(\mathcal{S} + \LiteralAINDR_{2}\) refutes \(\cnf(\skE(T))\), then by Lemma~\ref{lem:7} we obtain \(k \in \mathbb{N}\) and literals \(l_{i}(x, c_{1}, \dots, c_{i - 1})\) such that \(L(l_{i}(x, \vec{y})) \subseteq L(\skE(T))\) \(i = 1, \dots, k\).
  Moreover we have
  \[
    \cnf(\skE(T)) \cup \skE(\{ I_{x}l_{i}(x,c_{1}, \dots, c_{i - 1}) \mid 0 < i \leq k \}) \models \Box.
  \]
  Now we start with the clause set \(\cnf(\skE(T))\) and repeatedly apply the induction rule \(\INDRPF{\Literal(L(\skE(T)))}\) to derive the clause sets
  \[
    \mathcal{D}_{m} = \cnf(\skE(T)) \cup \cnf(\skE(\{ I_{x}l_{i}(x,c_{1}, \dots, c_{i - 1}) \mid 0 < i \leq m \})),
  \]
  for \(m = 1, \dots, k\).
  As shown above, this clause set \(\mathcal{D}_k\) is inconsistent, hence, by the refutational completeness of \(\mathcal{S}\) we obtain a refutation \(\mathcal{E}_{0}, \dots, \mathcal{E}_{n}\) from \(\mathcal{D}_{k}\).
  Hence the sequence \((\mathcal{D}_{1}, \dots, D_{k}, \mathcal{E}_{1}, \dots, \mathcal{E}_{n})\) is a \(\mathcal{S} + \INDRPF{\Literal(\skE(T))}\) refutation of \(\cnf(\skE(T))\).
\end{proof}
As an immediate consequence, we can transfer the previously established unprovability results to the concrete method described in \cite{reger2019,hajdu2020}.
\begin{theorem}
  \label{thm:2}
  Let \(\mathcal{S}\) be a sound and refutationally complete saturation system, then the system \(\mathcal{S} + \LiteralAINDR_{2}\) does neither refute the clause set \(\mathcal{X}_{m}\) nor the clause set \(\mathcal{Y}_{m,n}\) for \(0 < n < m\).
\end{theorem}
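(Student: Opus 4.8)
The plan is to combine the simulation of Proposition~\ref{pro:15} with the unprovability result of Theorem~\ref{thm:3}, arguing by contradiction. I treat the case \(\mathcal{C} = \mathcal{X}_m\) in detail; the case \(\mathcal{C} = \mathcal{Y}_{m,n}\) is completely analogous. So assume, towards a contradiction, that \(\mathcal{S} + \LiteralAINDR_2\) refutes \(\mathcal{X}_m = \cnf(\skE(\mathcal{T}' + \neg C_m))\).

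First I would apply Proposition~\ref{pro:15} with \(T \coloneqq \mathcal{T}' + \neg C_m\). Since \(\mathcal{S}\) is sound and refutationally complete, this yields a refutation of \(\cnf(\skE(T)) = \mathcal{X}_m\) in the system \(\mathcal{S} + \mathrm{Res} + \INDRPF{\Literal(L(\skE(T)))}\). Here I would record the book-keeping fact that \(L(\skE(T)) = L(\mathcal{X}_m)\), so that the induction formulas introduced by the rule are drawn from \(\Literal(L(\mathcal{X}_m))\).

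Next I would weaken literal induction to open induction. Because \(\Literal(L(\mathcal{X}_m)) \subseteq \Open(L(\mathcal{X}_m))\), and because the inference rules of \(\INDRPF{\Gamma_1}\) form a subset of those of \(\INDRPF{\Gamma_2}\) whenever \(\Gamma_1 \subseteq \Gamma_2\), every deduction in \(\mathcal{S} + \mathrm{Res} + \INDRPF{\Literal(L(\mathcal{X}_m))}\) is in particular a deduction in \(\mathcal{S} + \mathrm{Res} + \INDRPF{\Open(L(\mathcal{X}_m))}\). Hence the latter system also refutes \(\mathcal{X}_m\).

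Finally I would derive the contradiction. The resolution rule is sound in the sense of Definition~\ref{def:30}, so \(\mathcal{S} + \mathrm{Res}\) is a sound saturation system; Theorem~\ref{thm:3} therefore applies to it and states that \((\mathcal{S} + \mathrm{Res}) + \INDRPF{\Open(L(\mathcal{X}_m))}\) does \emph{not} refute \(\mathcal{X}_m\), contradicting the previous paragraph. There is no substantial obstacle in this argument; the only points demanding care are aligning the languages---verifying that the \(L(\skE(T))\) coming out of Proposition~\ref{pro:15} is exactly the \(L(\mathcal{X}_m)\) expected by Theorem~\ref{thm:3}---and confirming that adjoining \(\mathrm{Res}\) to \(\mathcal{S}\) preserves soundness, so that Theorem~\ref{thm:3} is legitimately applicable to the augmented system.
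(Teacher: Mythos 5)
Your proposal is correct and follows essentially the same route as the paper's own proof: assume a refutation by \(\mathcal{S} + \LiteralAINDR_{2}\), apply Proposition~\ref{pro:15} to obtain a refutation in \(\mathcal{S} + \mathrm{Res} + \INDRPF{\Literal(L(\mathcal{X}_{m}))}\), and derive a contradiction with Theorem~\ref{thm:3}. The only difference is that you spell out the steps the paper leaves implicit---the inclusion \(\Literal(L(\mathcal{X}_{m})) \subseteq \Open(L(\mathcal{X}_{m}))\) allowing the passage from literal to open induction, the soundness of \(\mathcal{S} + \mathrm{Res}\) needed to invoke Theorem~\ref{thm:3}, and the language bookkeeping \(L(\skE(T)) = L(\mathcal{X}_{m})\)---which is a faithful elaboration rather than a different argument.
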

\begin{proof}
  We consider the case for the clause set \(\mathcal{X}_{m}\) with \(1 < m\).
  The other case is analogous.
  Suppose that \(\mathcal{S} + \LiteralAINDR_{2}\) refutes \(\mathcal{X}_{m}\), then by Proposition~\ref{pro:15} the saturation system \(\mathcal{S} + \INDRPF{\Literal(L(\mathcal{X}_{m}))}\) refutes \(\mathcal{X}_{m}\).
  This contradicts Theorem~\ref{thm:3}.
\end{proof}
Theorem~\ref{thm:2} gives us a family of simple and practically relevant clause sets that cannot be proved by the calculi presented in \cite{reger2019,hajdu2020}.

Let us now briefly discuss these results.
A possible source of criticism for Theorem~\ref{thm:2} may be that the underlying independence results (Lemma~\ref{lem:11}) are overly strong.
That is they do not exploit the restriction of the induction to literals, but instead rely on the fact that the sentences \(C_{m}\) and \(D_{m,n}\) with \(0 < n < m\) are already unprovable with induction for all quantifier-free formulas.
We can address this point by the following results.
\begin{lemma}
  \label{lem:21}
  The theory \(\mathcal{T} + \IND{\Literal(\mathcal{T})}\) proves \ref{B:1}--\ref{B:4}.
\end{lemma}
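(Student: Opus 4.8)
The plan is to derive \eqref{B:2}, \eqref{B:3}, and \eqref{B:4} by routine single-literal inductions (with parameters) and to treat \eqref{B:1} by a separate trick. First I would establish two auxiliary equations, each by induction on a single equational literal: $0 + x = x$ (induction on $x$ in the literal $0 + x = x$, with base case \eqref{ax:A:0} and step case via \eqref{ax:A:s}), and $s(y) + x = s(y + x)$ (induction on $x$ with parameter $y$, using \eqref{ax:A:0} and \eqref{ax:A:s}). Since both induction formulas are equations, hence literals, the required instances lie in $\IND{\Literal(\mathcal{T})}$.

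With these in hand, commutativity \eqref{B:2} follows by induction on $y$ with parameter $x$: the base case reduces to $x + 0 = x = 0 + x$ and the step uses \eqref{ax:A:s} together with $s(y) + x = s(y + x)$. Associativity \eqref{B:3} is a direct induction on $z$ with parameters $x, y$, using \eqref{ax:A:0} and \eqref{ax:A:s}. For cancellation \eqref{B:4} I would argue contrapositively: fixing parameters $y, z$ with $y \neq z$, I induct on $x$ on the \emph{disequation} $x + y \neq x + z$. The base case $0 + y \neq 0 + z$ reduces to $y \neq z$ by the auxiliary equation $0 + w = w$, and the step case uses $s(x) + w = s(x + w)$ together with the injectivity of $s$ (Lemma~\ref{lem:33}). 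This yields $\forall x\,(x + y \neq x + z)$, and generalizing over $y, z$ gives \eqref{B:4}.

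The axiom \eqref{B:1} is the main obstacle, since it is genuinely \emph{not} a consequence of $\mathcal{T} + \eqref{B:2} + \eqref{B:3} + \eqref{B:4}$ (the nonnegative rationals with $s(x) = x + 1$ and $p(x) = \max(x - 1, 0)$ form a countermodel satisfying \eqref{B:2}--\eqref{B:4} but not \eqref{B:1}), so it must come from a literal induction that does not factor through the additive axioms. I would prove the equivalent statement $\forall x\,(x = 0 \vee x = s(p(x)))$ by contradiction: suppose some $a$ satisfies $a \neq 0$ and $a \neq s(p(a))$. From $a \neq s(p(a))$ and \eqref{ax:P:s} it follows that $a$ is not a successor, i.e.\ $s(w) \neq a$ for every $w$, since $a = s(w)$ would give $p(a) = w$ by \eqref{ax:P:s} and hence $s(p(a)) = s(w) = a$. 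Now I induct on $x$ on the disequation $\gamma(x, a) \coloneqq (x \neq a)$: the base case $0 \neq a$ holds by assumption, and the step $x \neq a \to s(x) \neq a$ holds because $s(x) \neq a$ already holds unconditionally. The instance $I_x(x \neq a)$ of $\IND{\Literal(\mathcal{T})}$ then yields $\forall x\,(x \neq a)$, and instantiating $x \coloneqq a$ gives $a \neq a$, a contradiction. Generalizing over $a$ establishes \eqref{B:1}.

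The step I expect to be delicate is precisely \eqref{B:1}: the obvious induction formula $x = 0 \vee x = s(p(x))$ is a disjunction of literals and therefore lies outside $\Literal(\mathcal{T})$, so one cannot simply imitate the open-induction proof. The key observation that unlocks it is that a putative nonzero non-successor $a$ can be \emph{excluded from the range of the whole domain} by a single disequational induction on $x \neq a$, turning the disjunctive statement into a parameter-driven refutation entirely analogous to the cancellation argument for \eqref{B:4}.
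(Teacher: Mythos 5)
Your proof is correct and takes essentially the same route as the paper's: \eqref{B:4} via the contrapositive and an induction on the disequation $x + y \neq x + z$, and \eqref{B:1} via a literal induction on a disequation ($x \neq a$ in your version, $x \neq y$ with the bad element as parameter in the paper's) that excludes the putative nonzero non-successor from the entire domain and then contradicts reflexivity. The remaining differences are cosmetic: you route \eqref{B:2}--\eqref{B:4} through explicit auxiliary equations ($0 + x = x$, $s(y) + x = s(y + x)$) where the paper uses \eqref{B:2} directly, and you extract ``$a$ is not a successor'' before the induction where the paper folds that case analysis into the induction step.
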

\begin{proof}
  Proving~\ref{B:2} and~\ref{B:3} is straightforward.
  For~\ref{B:4} we show the contrapositive \(y \neq z \rightarrow x + y \neq x + z\).
  We assume \(y \neq z\) and proceed by induction on \(x\) in the formula \(x + y \neq x + z\).
  For the base case we have to show \(0 + y \neq 0 + z\).
  By~\ref{B:2} and the definition of \(+\) the formula \(0 + y \neq 0 + z\) is equivalent to \(y \neq z\) which we have assumed.
  For the induction step we assume \(s(x) + y \neq s(x) + z\).
  By~\ref{B:2} and~\ref{ax:A:s} we obtain \(s(x + y) \neq s(x + z)\), hence \(x + y \neq x + z\) and we are done.

  Proving~\ref{B:1} is slightly more complicated because the induction interacts even more with the context.
  We assume \(x \neq 0\) and we have to show \(x = s(p(x))\).
  We proceed by induction on \(y\) in the formula \(x \neq y\).
  The induction base is trivial since we have assumed \(x \neq 0\).
  For the induction step we assume \(x \neq y_{0}\) and we have to show \(x \neq s(y_{0})\).
  Hence we assume \(x = s(y_{0})\).
  Now we have \(s(p(x)) = s(p(s(y_{0}))) = s(y_{0}) = x\) and we are done.
  Therefore we obtain the formula \(\Forall{y}{x \neq y}\) and in particular \(x \neq x\), which is a contradiction.
  Hence we obtain \(x = s(p(x))\).
\end{proof}
In the light of Shoenfield's theorem it is now clear that induction for literals is as powerful as quantifier-free induction.
\begin{proposition}
  \label{pro:17}
  \(\mathcal{T} + \IND{\Literal(\mathcal{T})} \equiv \mathcal{T} + \IND{\Open(\mathcal{T})}\).
\end{proposition}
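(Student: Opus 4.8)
The plan is to derive this equivalence by combining the two auxiliary results that immediately precede it, rather than by any direct manipulation of induction schemata. The inclusion from right to left is essentially free: since every literal is a quantifier-free formula, we have $\IND{\Literal(\mathcal{T})} \subseteq \IND{\Open(\mathcal{T})}$ as sets of axioms, and hence $\mathcal{T} + \IND{\Open(\mathcal{T})} \vdash \mathcal{T} + \IND{\Literal(\mathcal{T})}$ trivially. So the whole interest lies in the converse direction, namely that literal induction already recovers full open induction over $\mathcal{T}$.

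For the converse I would route through Shoenfield's theorem. First I would note that $\mathcal{T} + \IND{\Literal(\mathcal{T})}$ of course contains the axioms of $\mathcal{T}$, and that by Lemma~\ref{lem:21} it proves \ref{B:1}--\ref{B:4}; together these give $\mathcal{T} + \IND{\Literal(\mathcal{T})} \vdash \mathcal{T}'$. Then, by Shoenfield's Theorem~\ref{thm:9}, we have $\mathcal{T}' \equiv \mathcal{T} + \IND{\Open(\mathcal{T})}$, and in particular $\mathcal{T}' \vdash \mathcal{T} + \IND{\Open(\mathcal{T})}$. Chaining the two derivabilities yields $\mathcal{T} + \IND{\Literal(\mathcal{T})} \vdash \mathcal{T} + \IND{\Open(\mathcal{T})}$, which completes the equivalence.

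The genuine mathematical content of the hard direction has already been discharged in Lemma~\ref{lem:21}, so at the level of the present statement the argument is pure bookkeeping and I expect no real obstacle. The place where care is actually needed lies one step earlier, in that lemma: one must choose induction formulas that stay within the literal fragment yet still interact correctly with the defining axioms, and the delicate case is \ref{B:1}, where the induction is carried out on the literal $x \neq y$ and leverages the predecessor axioms \ref{ax:P:0}--\ref{ax:P:s}. Given that lemma together with Shoenfield's characterization, the final assembly is immediate.
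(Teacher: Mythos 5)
Your proposal is correct and takes essentially the same approach as the paper: the right-to-left direction is immediate because every literal is an open formula, and the left-to-right direction chains Lemma~\ref{lem:21} (literal induction proves \ref{B:1}--\ref{B:4}, hence $\mathcal{T}'$) with Shoenfield's Theorem~\ref{thm:9}. The paper's proof is just a terser statement of exactly this argument.
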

\begin{proof}
  The direction from right to left is obvious.
  For the direction from left to right follows from Lemma~\ref{lem:21} and Shoenfield's Theorem (Theorem~\ref{thm:9}).
\end{proof}
The underlying independence results are therefore not too strong and it is not possible to improve the result by taking into account the restriction of the induction to literals.
The result may also be interesting from a practical point of view, because induction for literals is much easier to implement efficiently than induction for quantifier-free formulas.
It would therefore be interesting to investigate under which conditions induction for quantifier-free formulas collapses to induction for literals. 
However, we believe that there are practically relevant theories in which the induction schema for literals is strictly weaker than the induction schema for quantifier-free formulas.
Such a theory could allow us to provide unprovability results that give a motivation for the development of stronger induction mechanisms.

Another possible source of criticism is that our results focus on abstractions that are quite far from practical reality.
Most importantly, we do not exploit the fact that the induction rules \(\LiteralAINDR_{i}\) (\(i = 1, 2\)) attempt induction only for literals of which an instance of the dual literal occurs in the derived clauses.
Selecting the induction literals in this way seems to be a strong theoretical and practical restriction.
However, this restriction is crucial for current practical systems because it permits an efficient operation of the prover.
In practice, the restriction is usually weakened by the usage of heuristics for the selection of induction formulas \cite{hajdu2020}.
Another promising method for discovering induction formulas is introduced in \cite{claessen2013a,valbuena2015}, but it is unclear how to integrate this efficiently into a saturation-based system.
We currently do not have a candidate clause set that exploits the way in which \(\LiteralAINDR_{i}\) (\(i = 1, 2\)) select induction literals, but we plan to investigate this restriction in the future.

On the other hand, working with high-level abstractions allows us to obtain results that are robust against minor refinements of the induction rule from \cite{reger2019} such as the refinement proposed in \cite{hajdu2020}.
Moreover, the underlying independence results together with Lemmas~\ref{lem:3} and \ref{lem:2} suggest natural, yet not necessarily practical, extensions of the induction rule by allowing simultaneous induction on multiple variables or by allowing quantification inside the induction formula.

\section{Conclusion, Future Work, and Remarks}
\label{sec:future-work}
In this article we have analyzed a commonly used design principle for the integration of induction into saturation systems that has recently received increased interest \cite{kersani2013,kersani2014}, \cite{cruanes2015,cruanes2017}, \cite{wand2017}, \cite{echenim2020}, \cite{reger2019,hajdu2020}.

In Section~\ref{sec:unrestricted_induction_and_skolemization}, we have considered a general framework for induction over natural numbers in saturation-based provers that extend the language by Skolem symbols.
By reducing this induction mechanism to a logical theory (see Theorem~\ref{thm:4}), we have shown that in many relevant cases extending the language of the induction schema by Skolem symbols does not grant any additional power (see Proposition~\ref{pro:12}).
Furthermore, we have considered, in Section~\ref{sec:restricted_induction_and_skolemization}, an induction rule that restricts occurrences of Skolem symbols to ground terms according to similar restrictions observed in practical systems.
We have shown that under this restriction Skolem symbols correspond to induction parameters (see Theorem~\ref{thm:1}).
Finally, in Section~\ref{sec:unprovability}, we have used the results from Section~\ref{sec:restricted_induction_and_skolemization} and  independence results from the literature on mathematical logic to obtain some practically relevant unprovability results for the systems described in \cite{reger2019,hajdu2020} (see Theorem~\ref{thm:2}).

We plan to continue the work on induction in saturation-based theorem proving by analyzing the methods
developed by Cruanes \cite{cruanes2015,cruanes2017}, Wand \cite{wand2017} and Echenim and Peltier \cite{echenim2020}.
We are particularly interested in Cruanes' method because its mode of operation is very similar to the methods described in \cite{reger2019,hajdu2020}.
We suspect that under reasonable assumptions, the induction in Cruanes' system corresponds to the restricted induction rule (see Definition~\ref{def:8}) over \(\forall_{1}\) formulas.
Furthermore, Cruanes' method also allows induction on several formulas simultaneously and introduces definitions by the AVATAR splitting mechanism \cite{voronkov2014}.

Furthermore the work in this article has given rise to a number of questions that we hope to address in the future.
In Section~\ref{sec:unrestricted_induction_and_skolemization} we have established some very coarse results concerning the conservativity of extensions of the language of the induction formulas by Skolem symbols.
In particular we have shown that in many relevant cases extending the induction schema by Skolem symbols does not result in a more powerful system.
We have however left open the general case (see Question~\ref{que:3}).
This question is not proper to induction but is part of a more general question concerning the extension of the language of an axiom schema by Skolem symbols.
In Section~\ref{sec:restricted_induction_and_skolemization} we have mainly considered the case where the occurrences of all Skolem symbols in the induction formulas are subject to the restriction mentioned above.
Practical systems only impose this restriction on Skolem symbols that are generated by the induction rule.
We have left open the question about a characterization of these systems (see Question~\ref{que:1}).
Finally, it seems worthwhile to investigate the effects of the analyticity properties of induction rules used in concrete systems such as \cite{reger2019,hajdu2020} and their interaction with redundancy rules.
\bibliography{bibliography}
\bibliographystyle{alpha}

\appendix

\section{Appendix}
\label{sec:appendix:a}
This section provides a proof of Proposition~\ref{pro:2} and all the related lemmas.
The proof essentially proceeds by replacing each occurrence of a Skolem symbol by its definition.
We start by showing that we can isolate the occurrences of a given function symbol.
\begin{lemma}
  \label{lem:47}
  Let \(\varphi(\vec{z})\) be a formula and \(f\) a function symbol. Then there exists a formula \(\psi(\vec{z})\) such that \(\vdash \varphi(\vec{z}) \leftrightarrow \psi(\vec{z})\) and \(f\) occurs in \(\psi\) only in subformulas of the form \(x = f(\vec{t})\), where \(\vec{t}\) is free of \(f\).
\end{lemma}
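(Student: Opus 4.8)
The plan is to \emph{flatten} the occurrences of $f$ by repeatedly abstracting an innermost $f$-term into a fresh variable, controlling the process by a numerical measure. Call an occurrence of $f$ in a formula \emph{isolated} if it is the leading symbol of a subterm $f(\vec t)$ whose arguments $\vec t$ are free of $f$ and whose containing atomic subformula is exactly $x = f(\vec t)$ for a variable $x$; otherwise call it \emph{non-isolated}. Let $b(\chi)$ denote the number of non-isolated occurrences of $f$ in $\chi$. The goal is precisely a formula $\psi$ with $b(\psi) = 0$, and I would establish its existence by induction on $b(\varphi)$, the base case $b(\varphi) = 0$ being immediate with $\psi \coloneqq \varphi$.

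For the induction step, suppose $b(\varphi) > 0$. First I would locate a suitable term to abstract: starting from any non-isolated occurrence of $f$ heading a subterm $f(\vec r)$, if $\vec r$ still contains $f$ then that inner occurrence sits strictly inside a term and so cannot be of the isolating form $x = f(\cdots)$, hence it is again non-isolated; descending in this way through the finitely many nested $f$'s yields an innermost subterm $s = f(\vec t)$ with $\vec t$ free of $f$ which occurs non-isolated. Let $A$ be the atomic subformula of $\varphi$ containing this occurrence.

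The key local step is the logically valid equivalence $\vdash A \leftrightarrow \exists x (x = f(\vec t) \wedge A^{*})$, where $x$ is fresh and $A^{*}$ is obtained from $A$ by replacing every occurrence of the term $f(\vec t)$ by $x$; this holds because $f$ is a total function symbol, so $f(\vec t)$ is the unique witness for $x$. Crucially I carry out the abstraction at the level of the atom $A$ rather than globally: the variables of $\vec t$ are free in $A$ (atoms contain no quantifiers), so no variable is captured when $\exists x$ is placed around $A$. Replacing $A$ in $\varphi$ by this equivalent subformula preserves provable equivalence by the replacement (congruence) theorem, and since the abstraction binds only the fresh $x$ and touches nothing free in $\varphi$, the free variables $\vec z$ are preserved. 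Applying the induction hypothesis then yields the desired $\psi(\vec z)$.

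The main obstacle is verifying that $b$ strictly decreases. In the new subformula the introduced equation $x = f(\vec t)$ is isolated (as $\vec t$ is free of $f$) and so contributes nothing to $b$, while in $A^{*}$ every occurrence of $f(\vec t)$—in particular the chosen non-isolated one—has been erased. I would then check that the substitution can only convert non-isolated occurrences into isolated ones and never the reverse: an isolated atom $y = f(\vec u)$ has $\vec u$ free of $f$, hence free of the term $f(\vec t)$, so it is untouched (unless it is literally $y = f(\vec t)$, in which case that isolated occurrence simply vanishes), whereas deleting a nested $f(\vec t)$ from the arguments of some other $f$ can only turn that occurrence isolated. Hence $b$ drops by at least one, and the induction closes. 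The delicate part is exactly this status bookkeeping together with the descent to an innermost non-isolated occurrence; once these are pinned down the remaining verifications are routine.
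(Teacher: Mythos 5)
Your proof is correct and takes essentially the same approach as the paper's: the paper exhaustively applies the dual, universally quantified abstraction equivalence \(\vdash \varphi(\vec{t}(u)) \leftrightarrow \forall x (x = u \rightarrow \varphi(\vec{t}(x)))\) and argues termination by the decreasing nesting depth of \(f\). Your atom-level existential guard \(\exists x (x = f(\vec{t}) \wedge A^{*})\) together with the count of non-isolated occurrences is a more carefully worked-out rendering of the same flattening idea (and handles the variable-capture subtlety more explicitly than the paper's one-line argument does).
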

\begin{proof}
  We exhaustively apply the equivalence from left to right
  \[
    \vdash \varphi(\vec{t}(u))) \leftrightarrow \Forall{x}{(x = u \rightarrow \varphi(\vec{t}(x)))},
  \]
  where \(x\) does not occur freely in \(\varphi(\vec{t}(u))\).
  It is straightforward to see that the logical equivalence of the formula so obtained is preserved and furthermore this transformation terminates because the overall nesting depth of \(f\) strictly decreases.
\end{proof}
After isolating a function symbol and assuming that it has a definition we can simply replace all the occurrences by its definition.
\begin{lemma}
  \label{lem:46}
  Let \(M\) be an \(L\) structure, \(\psi(\vec{x}, y)\) be an \(L\) formula such that \(M \models \ExistsExOne{y}{\psi(\vec{x}, y)}\).
  Let furthermore \(f\) be a function symbol and \(\varphi(\vec{z})\) an \(L \cup \{ f \}\) formula.
  Let \(M' \coloneqq M \cup \{ f \mapsto f_{\psi}\}\), where \(f_{\psi}(\vec{a}) = b\) with \(b \in |M|\) the only choice so that \(M \models \psi(\vec{a}, b)\).
  Then there exists an \(L\) formula \(\theta(\vec{z})\) such that \(M' \models \varphi(\vec{z}) \leftrightarrow \theta(\vec{z})\).
\end{lemma}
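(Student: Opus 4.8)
The plan is to remove the symbol $f$ from $\varphi$ by replacing each of its occurrences with the formula $\psi$ that defines it, using that in $M'$ the interpretation $f_\psi$ is entirely determined by $\psi$ together with the uniqueness hypothesis.

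First I would apply Lemma~\ref{lem:47} to $\varphi(\vec{z})$ and the symbol $f$, obtaining a formula $\varphi'(\vec{z})$ with $\vdash \varphi(\vec{z}) \leftrightarrow \varphi'(\vec{z})$ in which $f$ occurs only inside atomic subformulas $x = f(\vec{t})$ with $\vec{t}$ free of $f$. Since each such $\vec{t}$ is then built from $L$-terms alone, these occurrences are flat, i.e.\ no $f$ is nested in the argument of another $f$. As the equivalence provided by Lemma~\ref{lem:47} is logically valid, it holds in particular in $M'$, so $M' \models \varphi(\vec{z}) \leftrightarrow \varphi'(\vec{z})$.

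Next I would record the key local equivalence: for $L$-terms $\vec{t}$ and a variable $x$,
\[
  M' \models (x = f(\vec{t})) \leftrightarrow \psi(\vec{t}, x),
\]
where $\psi(\vec{t}, x)$ denotes the result of substituting $\vec{t}$ for $\vec{x}$ and $x$ for $y$ in $\psi(\vec{x}, y)$, renaming bound variables of $\psi$ to avoid capture. This holds because, under any assignment, $\vec{t}$ evaluates in the $L$-reduct $M$ of $M'$ to some tuple $\vec{a} \in M$, and by construction $f_\psi(\vec{a})$ is the unique $b$ with $M \models \psi(\vec{a}, b)$; hence both sides assert that the value of $x$ equals that unique $b$. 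Here the hypothesis $M \models \exists! y \psi(\vec{x}, y)$ and the fact that $\psi$ is an $L$-formula---so its truth value agrees in $M$ and $M'$---are exactly what make both directions go through.

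Finally I would define $\theta$ by replacing in $\varphi'$ every subformula $x = f(\vec{t})$ by $\psi(\vec{t}, x)$. All occurrences of $f$ then vanish and $\psi$, $\vec{t}$ are over $L$, so $\theta$ is an $L$-formula, and $M' \models \varphi \leftrightarrow \theta$ follows by combining the two equivalences above. The step I expect to be the main obstacle is justifying this substitution, namely the congruence principle that if $M' \models A \leftrightarrow B$ under every assignment, with $A$ and $B$ having matching free variables, then replacing $A$ by $B$ inside an arbitrary formula context preserves truth in $M'$. This is proved by induction on the context; the quantifier cases are unproblematic precisely because the local equivalence was established uniformly over all assignments, and the only bookkeeping needed is the bound-variable renaming already mentioned.
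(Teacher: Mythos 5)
Your proposal is correct and follows essentially the same route as the paper's proof: isolate the occurrences of \(f\) via Lemma~\ref{lem:47}, observe that \(M' \models f(\vec{x}) = y \leftrightarrow \psi(\vec{x}, y)\) by the definition of \(f_{\psi}\) and the uniqueness hypothesis, and replace each isolated occurrence by the corresponding instance of \(\psi\). Your write-up merely makes explicit the bookkeeping (capture-avoiding substitution and the replacement-of-equivalents induction) that the paper leaves implicit.
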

\begin{proof}
  This is easily seen by first observing that \(M' \models f(\vec{x}) = y \leftrightarrow \psi(\vec{x}, y)\).
  Now apply Lemma~\ref{lem:47} to \(\varphi\) in order to obtain a formula in which \(f\) occurs only as subformulas of the form \(y = f(\vec{t})\) with \(\vec{t}\) free of \(f\) and replace these occurrences with \(\psi(\vec{t}, y)\).
  Clearly the resulting formula is equivalent to \(\varphi\) in \(M'\).
\end{proof}
The assumption that a model has definable Skolem functions only provides us with definitions for Skolem symbols of \(L\) formulas.
The definitions for other Skolem symbols that are introduced at later stages need to be constructed based on the definitions for symbols of lower stages.
\begin{lemma}
  \label{lem:50}
  Let \(L\) be a Skolem-free first-order language and \(M\) an \(L\) structure with definable Skolem functions.
  Then there exists an expansion \(M'\) of \(M\) to \(\sk^{\omega}(L)\) such that \(M' \models \SASchema{L}\) and for each Skolem symbol \(f\) of \(\sk^{\omega}(L)\) then \(f^{M'}\) is \(L\) definable in \(M'\).
\end{lemma}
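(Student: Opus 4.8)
The plan is to build $M'$ by induction on the stage of the Skolem symbols, interpreting all symbols of stage $n+1$ once every symbol of stage $\le n$ has been interpreted. Throughout the induction I maintain the invariant that the partial expansion $M'_n$ of $M$ to $\sk^{n}(L)$ satisfies every Skolem axiom $\SA_{x}^{Q}\psi$ whose symbol $\mathfrak{s}_{Qx\psi}$ has stage $\le n$, and that each such Skolem symbol is interpreted by a function that is $L$-definable in $M'_n$ (equivalently, $L$-definable in $M$, since $L$-definability in the sense of Definition~\ref{def:11} depends only on the $L$-reduct). At stage $0$ there are no Skolem symbols to interpret and the invariant holds vacuously.

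For the inductive step, fix a Skolem symbol $f = \mathfrak{s}_{Qx\varphi}$ of stage $n+1$, so that $\varphi$ is an $\sk^{n}(L)$ formula whose free variables $\vec{y}$ (exactly the free variables of $Qx\varphi$) have length equal to the arity of $f$. By the induction hypothesis every Skolem symbol occurring in $\varphi$ has stage $\le n$ and is $L$-definable in $M'_n$; iterating Lemma~\ref{lem:46} over the finitely many such symbols (using Lemma~\ref{lem:47} to isolate each occurrence in turn), I obtain an $L$ formula $\varphi^{*}(x,\vec{y})$ with $M'_n \models \varphi \leftrightarrow \varphi^{*}$. For $Q = \exists$ I apply the assumption that $M$ has definable Skolem functions (Definition~\ref{def:17}) to $\varphi^{*}$, obtaining an $L$-definable $g$ with $M \models \exists x\,\varphi^{*}(x,\vec{y}) \rightarrow \varphi^{*}(g(\vec{y}),\vec{y})$, and I set $f^{M'} \coloneqq g$. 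For $Q = \forall$ I instead apply it to $\neg\varphi^{*}$, obtaining $g$ with $M \models \exists x\,\neg\varphi^{*}(x,\vec{y}) \rightarrow \neg\varphi^{*}(g(\vec{y}),\vec{y})$, and again set $f^{M'} \coloneqq g$; this is exactly what the contrapositive of $\SA_{x}^{\forall}$ demands. Since $\varphi$ does not mention $f$ and $\varphi^{*}$ is an $L$ formula, the equivalence $\varphi \leftrightarrow \varphi^{*}$ and the Skolem property established for $\varphi^{*}$ persist in $M'_{n+1}$, so $\SA_{x}^{Q}\varphi$ holds there; and $f^{M'} = g$ is $L$-definable by construction. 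As these interpretations depend only on strictly lower stages, all stage-$(n+1)$ symbols may be interpreted simultaneously without circularity, and the invariant is preserved.

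Finally I set $M' \coloneqq \bigcup_{n<\omega} M'_n$, an expansion of $M$ to $\sk^{\omega}(L) = \bigcup_{n<\omega}\sk^{n}(L)$. Each axiom of $\SASchema{L}$ concerns a symbol of some finite stage and is therefore already satisfied by the corresponding $M'_n$, whence $M' \models \SASchema{L}$; and every Skolem symbol of $\sk^{\omega}(L)$ is $L$-definable in $M'$ by the invariant. I expect the crux to be the reduction of the $\sk^{n}(L)$ formula $\varphi$ to the $L$ formula $\varphi^{*}$: the delicate points are that the successive replacement of Skolem symbols by their $L$-definitions terminates and respects the stage ordering (guaranteed because a stage-$(n+1)$ symbol mentions only symbols of strictly smaller stage), and that the equivalence $\varphi \leftrightarrow \varphi^{*}$ holds in the partial structure already carrying those interpretations, so that the witness produced by Definition~\ref{def:17} for $\varphi^{*}$ transfers back to $\varphi$ and yields the Skolem axiom for $f$.
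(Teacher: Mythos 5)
Your proposal is correct and follows essentially the same route as the paper's proof: induction on the stage of the Skolem symbols, eliminating lower-stage symbols from the defining formula via repeated application of Lemma~\ref{lem:46}, invoking Definition~\ref{def:17} on the resulting $L$ formula to interpret the new symbol, and taking the union $\bigcup_{n<\omega} M'_n$ at the end. If anything, your treatment is slightly more careful than the paper's, since you handle the case $Q=\forall$ explicitly via the contrapositive (applying the definable-Skolem-function hypothesis to $\neg\varphi^{*}$), whereas the paper's verification is written only for the existential Skolem axiom.
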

\begin{proof}
  We show by induction on \(i \in \mathbb{N}\) that there is an expansion \(M_{i}\) of \(M\) to the language \(\sk^{i}(L)\) such that for each Skolem symbol \(f/m\) of \(\sk^{i}(L)\) there exists a formula \(\psi_{f}(\vec{x}, y)\) such that \(M_{i} \models f(x_{1}, \dots, x_{m}) = y \leftrightarrow \psi_{f}(x_{1}, \dots, x_{m},y)\).
  The base case with \(i = 0\) is trivial.
  For the induction step we assume the claim for \(i\) and consider the case for \(i + 1\).
  Let \(f \coloneqq \mathfrak{s}_{\Quantifier{Q}{y}{\varphi(y,\vec{x})}}\) be a Skolem symbol of \(\sk^{i+1}(L)\), that does not belong to \(\sk^{i}(L)\).
  Let \(g_{1}/k_{1}\), \dots, \(g_{n}/k_{n}\) be the Skolem symbols occurring in the formula \(\varphi\).
  Then clearly \(g_{j}\) belongs to \(\sk^{i}(L)\) for all \(j = 1, \dots, n\).
  By the induction hypothesis there exist \(L\) formulas \(\psi_{g_{j}}(\vec{x}_{j})\) such that \(M_{i} \models g_{j}(x_{1}, \dots, x_{k_{j}}) = y \leftrightarrow \psi_{g_{i}}(x_{1}, \dots, x_{k_{j}}, y)\), for \(j = 1, \dots, n\).
  Then by repeated application of Lemma~\ref{lem:46} to the formula \(\varphi\), there exists an \(L\) formula \(\psi_{f}(\vec{x}, y)\) such that \(M_{i} \models \varphi(\vec{x}, y) \leftrightarrow \psi_{f}(\vec{x}, y)\).
  Since \(\psi_{f}\) is an \(L\) formula, \(M\) has definable Skolem functions, there exists a function \(h: |M|^{k} \to |M|\) and an \(L\) formula \(\delta_{h}(\vec{x}, y)\) such that \(h\) is defined in \(M\) by \(\delta_{h}\) and \(M \models \Exists{y}{\psi_{f}(\vec{x}, y) \rightarrow \psi_{f}(\vec{x}, h(\vec{x}))}\).
  We set \(f^{M_{i+1}} \coloneqq h\), then we have \(M_{i+1} \models f(\vec{x}) = y \leftrightarrow \delta_{h}(\vec{x}, y)\).
  It remains to show that \(M_{i+1}\) satisfies the Skolem axiom for \(f\).
  Suppose we have \(M_{i} \models \Exists{y}{\varphi(\vec{d}, y)}\), then we have \(M_{i} \models \Exists{y}{\psi_{f}(\vec{d}, y)}\).
  Hence \(M_{i + 1} \models \psi_{f}(\vec{d}, h(\vec{d}))\) and therefore \(M_{i + 1} \models \varphi(\vec{d}, f(\vec{d}))\).
  Hence \(M_{i + 1} \models \Exists{y}{\varphi(\vec{x}, y) \rightarrow \varphi(\vec{x}, f(\vec{x}))}\).
  Finally, we obtain \(M'\) by \(M' \coloneqq \bigcup_{i \geq 0} M_{i}\).
\end{proof}
Proving Proposition~\ref{pro:2} is now just a matter of replacing Skolem symbols in induction formulas by their definitions.
\begin{proof}[Proof of Proposition~\ref{pro:2}]
  Let \(\varphi\) be an \(L\) formula such that \(T + \SASchema{L} + \IND{\sk^{\omega}(L)} \vdash \varphi\).
  We proceed indirectly and assume \(T + \IND{L} \not \vdash \varphi\).
  Then there exists a model \(M\) of \(T + \IND{L}\) such that \(M \not \models \varphi\).
  By Lemma~\ref{lem:50} there exists an expansion \(M'\) of \(M\) to \(\sk^{\omega}(L)\) such that \(M' \models \SASchema{L}\) and for every Skolem symbol \(f\) there exists an \(L\) formula \(\delta_{f}(\vec{x}, y)\) such that \(M' \models f(\vec{x}) = y \leftrightarrow \delta_{f}(\vec{x}, y)\).
  Let \(\theta(x, \vec{z})\) be an \(\sk^{\omega}(L)\) formula and consider the induction axiom \(I_{x}\theta(x,\vec{z})\).
  By Lemma~\ref{lem:46} there exists an \(L\) formula \(\theta'(x, \vec{z})\) such that \(M' \models \theta(x, \vec{z}) \leftrightarrow \theta'(x, \vec{z})\).
  Hence we have \(M' \models I_{x}\theta(x, \vec{z}) \leftrightarrow I_{x}\theta'(x, \vec{z})\).
  Since \(M \models \IND{L}\), we have \(M' \models I_{x}\theta(x, \vec{z})\).
  Hence \(M' \models \IND{\sk^{\omega}(L)}\) and therefore \(M' \models T + \SASchema{L} + \IND{\sk^{\omega}(L)}\) but \(M' \not \models \varphi\).
  Contradiction!
\end{proof}

\section{Appendix}
\label{sec:appendix:b}
In this section we provide a model theoretic proof of Lemma~\ref{lem:13}.
The difficulty consists in showing that a given structure satisfies the induction schema \(\INDParameterFree{\Open(L(\mathcal{T}))}\).
In order to address this problem we start by simplifying the language of the induction schema (see Proposition~\ref{pro:8}).
By \(L'\) we denote the language \(L(\mathcal{T})\) without \(p\).
\begin{lemma}
  \label{lem:35}
  The theory \(\mathcal{T} + \INDParameterFree{\Open(L(\mathcal{T}))}\) proves the following formulas
  \begin{gather}
   x = 0 \vee x = s(p(x)). \label{lem:35:1} \\
   \text{\(x + \numeral{k} = \numeral{k} + x\)} \label{lem:35:2} \\
   \text{\(x + \numeral{k + 1} \neq x\)}, \label{lem:35:3}
 \end{gather}
 for all \(k \in \mathbb{N}\).
\end{lemma}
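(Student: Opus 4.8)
The plan is to establish each of the three formulas by structural induction on $x$, in every case inducting on a quantifier-free formula whose only free variable is the induction variable $x$, so that the induction instance used genuinely lies in $\IND{\Open(L(\mathcal{T}))^{-}}$. For \eqref{lem:35:1} I would induct on $x$ in $\varphi(x) \coloneqq (x = 0 \vee x = s(p(x)))$. The base case $\varphi(0)$ holds via its left disjunct. For the induction step, \eqref{ax:P:s} gives $p(s(x)) = x$, hence $s(p(s(x))) = s(x)$, which is precisely the right disjunct of $\varphi(s(x))$; here the induction hypothesis is not even needed, but packaging the argument as an induction is what lets us discharge the universally quantified conclusion.

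The main engine for the remaining two items is a preliminary observation provable in $\mathcal{T}$ \emph{without} induction: for each fixed $k$ one has $\mathcal{T} \vdash x + \numeral{k} = s^{k}(x)$, obtained by $k$ applications of \eqref{ax:A:s} followed by \eqref{ax:A:0}, i.e. $x + s^{k}(0) = s(x + s^{k-1}(0)) = \dots = s^{k}(x + 0) = s^{k}(x)$. This finite unfolding reduces additions carrying a numeral on the right to iterated successors. For \eqref{lem:35:2} the asymmetry is that $\numeral{k} + x$ cannot be unfolded this way, since the variable sits to the right of $+$; I would therefore prove $\numeral{k} + x = s^{k}(x)$ by induction on $x$, with base case the ground instance $\numeral{k} + 0 = \numeral{k}$ from \eqref{ax:A:0} and step case using \eqref{ax:A:s} together with the hypothesis. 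Combining the two gives $x + \numeral{k} = s^{k}(x) = \numeral{k} + x$.

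For \eqref{lem:35:3} the preliminary observation reduces the claim to $s^{k+1}(x) \neq x$, which I would prove by induction on $x$: the base case $s^{k+1}(0) \neq 0$ is a ground instance of \eqref{ax:D}, and the step follows from Lemma~\ref{lem:33}, since $s^{k+1}(s(x)) = s(x)$ would yield $s^{k+1}(x) = x$, contradicting the hypothesis.

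I do not expect a deep obstacle here; the care required is of two kinds. First, each statement is a schema parametric in the external natural number $k$, so the argument is really a meta-level induction on $k$ supplying, for every $k$, a finite $\mathcal{T}$-derivation plus a single object-level induction on $x$; these two levels must be kept strictly separate. Second, at each application I must check that the inducted formula is genuinely in $\Open(L(\mathcal{T}))^{-}$ — quantifier-free and with $x$ as its sole free variable — which holds because $\numeral{k}$ and the terms $s^{k}(\cdot)$ are built only from the closed term $0$ and the unary symbol $s$, introducing no additional free variables.
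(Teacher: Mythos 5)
Your proposal is correct, and there is nothing in the paper to compare it against: Lemma~\ref{lem:35} is stated in Appendix~\ref{sec:appendix:b} without proof, evidently regarded as routine. Your argument supplies exactly the expected details --- each induction formula ($x = 0 \vee x = s(p(x))$, $\numeral{k} + x = s^{k}(x)$, $s^{k+1}(x) \neq x$) is quantifier-free with $x$ as its only free variable, hence admissible in $\IND{\Open(L(\mathcal{T}))^{-}}$, the reduction $x + \numeral{k} = s^{k}(x)$ is pure $\mathcal{T}$-reasoning via \eqref{ax:A:s} and \eqref{ax:A:0}, and the meta-level/object-level separation in the parameter $k$ is handled correctly.
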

Next we show that whenever a \(p\)-free term contains a free variable \(x\), then whenever the variable \(x\) is substituted for \(s(x)\), we can propagate one occurrence of the successor function to the root of the term.
\begin{lemma}
  \label{lem:38}
  Let \(t(x)\) be a non-ground \(p\)-free term, then there exists a \(p\)-free term \(t'(x)\) such that \(\mathcal{T} \vdash t(s(x)) = s(t'(x))\).
\end{lemma}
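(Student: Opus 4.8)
The plan is to argue by structural induction on the $p$-free term $t$. Since $t$ is an $L'$-term in at most the variable $x$, it is built only from $0$, $x$, $s$, and $+$, so the cases to treat are $t = x$, $t = 0$, $t = s(u)$, and $t = u + v$.

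Before starting the induction I would isolate the one place where the weakness of $\mathcal{T}$ actually bites, by recording the auxiliary fact that for every ground $p$-free term $v$ and every term $a$ we have $\mathcal{T} \vdash s(a) + v = s(a + v)$. This follows in two steps. First, by Proposition~\ref{pro:4} (more precisely, by evaluating $+$ on numerals using \eqref{ax:A:0} and \eqref{ax:A:s}) every ground $p$-free term $v$ satisfies $\mathcal{T} \vdash v = \numeral{n}$, where $n$ is the value it denotes. Second, a short meta-induction on $n$ shows $\mathcal{T} \vdash s(a) + \numeral{n} = s(a + \numeral{n})$: the base case $n = 0$ is \eqref{ax:A:0}, and the induction step applies \eqref{ax:A:s} on both sides. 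Combining the two yields the claimed identity.

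With this in hand the induction is routine. For $t = x$ take $t'(x) \coloneqq x$, since $t(s(x)) = s(x)$. The case $t = 0$ is vacuous, as $t$ is assumed non-ground. For $t = s(u)$ take $t'(x) \coloneqq u(s(x))$ and use $t(s(x)) = s(u(s(x)))$ directly, without the induction hypothesis. For $t = u + v$, non-groundness of $t$ means that $u$ or $v$ is non-ground, and I would split accordingly. If $v$ is non-ground, the induction hypothesis yields a $p$-free $v'$ with $\mathcal{T} \vdash v(s(x)) = s(v'(x))$, whence $t(s(x)) = u(s(x)) + s(v'(x)) = s(u(s(x)) + v'(x))$ by \eqref{ax:A:s}, so $t'(x) \coloneqq u(s(x)) + v'(x)$ works. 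If instead $v$ is ground (so that $u$ is non-ground), the induction hypothesis gives a $p$-free $u'$ with $\mathcal{T} \vdash u(s(x)) = s(u'(x))$; since $v$ is ground we have $v(s(x)) = v$, and the auxiliary fact applied with $a \coloneqq u'(x)$ gives $t(s(x)) = s(u'(x)) + v = s(u'(x) + v)$, so $t'(x) \coloneqq u'(x) + v$ works.

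The only real obstacle is this last subcase, where the extracted successor ends up in the \emph{left} argument of $+$. The axiom \eqref{ax:A:s} pulls a successor out of the right summand only, and the symmetric law $s(a) + b = s(a + b)$ for a variable $b$ is genuinely unavailable in $\mathcal{T}$, as it would require induction. Restricting to a ground summand $v$, which $\mathcal{T}$ evaluates to a numeral, is precisely what lets the successor be transported across, and this is why the auxiliary fact is the crux of the argument.
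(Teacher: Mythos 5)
Your proof is correct and takes essentially the same route as the paper: structural induction on \(t\), with the crucial case split in \(t = u + v\) on whether the right summand is ground, and the ground case discharged by evaluating that summand to a numeral and shuttling the successor across via repeated use of \eqref{ax:A:s}. The only cosmetic difference is that the paper absorbs the numeral into iterated successors, taking \(t' = s^{k}(u')\), whereas you isolate the identity \(s(a) + v = s(a + v)\) for ground \(v\) as an auxiliary fact and take \(t' = u'(x) + v\); both yield a \(p\)-free \(t'\).
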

\begin{proof}
  We proceed by induction on the structure of the term \(t\).
  If \(t = x\), then we are done by letting \(t' = t\).
  If \(t = s(u(x))\), then \(u\) is non-ground and \(p\)-free.
  We let \(t' = u(s(x))\), then we have \(\mathcal{T} \vdash t(s(x)) = s(u(s(x))) = s(t'(x))\).
  If \(t = u_{1} + u_{2}\), then we have to consider two cases depending on whether \(u_{2}\) is ground.
  If \(u_{2}\) is not ground, then by the induction hypothesis there exists \(u_{2}'\) such that \(\mathcal{T} \vdash u_{2}(s(x)) = s(u_{2}'(x))\).
  Then we have \(\mathcal{T} \vdash u_{1}(s(x)) + u_{2}(s(x)) = u_{1}(s(x)) + s(u_{2}'(x)) = s(u_{1}(s(x)) + u_{2}'(x)\) and we set \(t' = u_{1}(s(x)) + u_{2}'\).
  If \(u_{2}\) is ground, then \(u_{1}\) is non-ground and by the induction hypothesis there exists \(u_{1}'\) such that \(\mathcal{T} \vdash u_{1}(s(x)) = s(u_{1}'(x))\).
  We have \(\mathcal{T} \vdash t(s(x)) = u_{1}(s(x)) + u_{2} = s(u_{1}'(x)) + \numeral{k} = s(s^{k}(u_{1}'(x)))\), hence we choose \(t' = s^{k}(u_{1}')\).
\end{proof}
Now we will show that given a term \(t(x)\), we can eliminate the occurrences of \(p\) in \(t(s^{N}(x))\) when \(N\) is large enough.
\begin{lemma}
  \label{lem:37}
  Let \(t(x)\) be a term, then there exists \(N \in \mathbb{N}\) and a \(p\)-free term \(t\) such that \(\mathcal{T} \vdash t(s^{N}(x)) = t'\).
\end{lemma}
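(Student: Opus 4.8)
The plan is to induct on the structure of the term $t$, after first disposing of the ground case. For the ground case, recall (cf.\ Proposition~\ref{pro:4}) that $\mathcal{T}$ decides ground terms: a routine induction on a ground term $u$ using \eqref{ax:P:0}, \eqref{ax:P:s}, \eqref{ax:A:0}, and \eqref{ax:A:s} shows $\mathcal{T} \vdash u = \numeral{m}$ for some $m \in \mathbb{N}$. Since every numeral is $p$-free, if $t$ is ground we simply take $N = 0$ and $t' = \numeral{m}$.

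Before treating the inductive cases I would record a \emph{monotonicity} observation: if $\mathcal{T} \vdash t(s^{N_0}(x)) = t'$ with $t'$ being $p$-free, then substituting $s^{j}(x)$ for $x$ in this provable equation yields $\mathcal{T} \vdash t(s^{N_0 + j}(x)) = t'(s^{j}(x))$, and $t'(s^{j}(x))$ is again $p$-free. Hence once the statement holds for some $N_0$ it holds for every $N \geq N_0$, which lets me raise several independently obtained bounds to a common one.

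Now assuming $t$ is non-ground, I argue by cases on its outermost symbol. The cases $t = x$ (take $N = 0$, $t' = x$) and $t = s(u)$ (apply the induction hypothesis to $u$ and prepend $s$) are immediate. For $t = u_{1} + u_{2}$ I would apply the induction hypothesis (or the ground case) to each $u_{i}$ to obtain $p$-free witnesses at bounds $N_{1}, N_{2}$, raise both to $N = \max(N_{1}, N_{2})$ via the monotonicity observation, and combine the resulting $p$-free terms with $+$.

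The crux is the case $t = p(u)$, where $u$ must be non-ground. By the induction hypothesis there is an $N_{0}$ and a $p$-free $u'$ with $\mathcal{T} \vdash u(s^{N_0}(x)) = u'(x)$. If $u'$ happens to be ground, then $p(u')$ reduces to a numeral by the ground case and we are done at $N = N_{0}$. Otherwise $u'$ is a non-ground $p$-free term, so Lemma~\ref{lem:38} supplies a $p$-free $u''$ with $\mathcal{T} \vdash u'(s(x)) = s(u''(x))$. Substituting $s(x)$ for $x$ in the equation for $u$ gives $\mathcal{T} \vdash u(s^{N_{0}+1}(x)) = u'(s(x)) = s(u''(x))$, whence by \eqref{ax:P:s}
\[
  \mathcal{T} \vdash p\bigl(u(s^{N_{0}+1}(x))\bigr) = p\bigl(s(u''(x))\bigr) = u''(x).
\]
Thus $N = N_{0} + 1$ and $t' = u''$ work. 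The essential idea, and the step I expect to require the most care, is precisely this manoeuvre: pushing $u$ far enough to become $p$-free, spending one further successor so that Lemma~\ref{lem:38} exposes a leading $s$ at the root, and then cancelling it against the outer $p$ via \eqref{ax:P:s}. The bookkeeping of ground versus non-ground subterms, ensuring Lemma~\ref{lem:38} is invoked only where it applies, is where the argument is most delicate.
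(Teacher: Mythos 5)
Your proof is correct and follows essentially the same route as the paper's: structural induction on \(t\), with the ground case reduced to a numeral, bounds combined via \(\max\) in the \(+\) case, and the \(p(u)\) case resolved by spending one extra successor so that Lemma~\ref{lem:38} exposes a leading \(s\) to cancel against \(p\) via \eqref{ax:P:s}. You are in fact slightly more careful than the paper, which applies Lemma~\ref{lem:38} to \(u'\) without checking the non-groundness hypothesis of that lemma---the side case you dispose of explicitly.
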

\begin{proof}
  If \(t\) is a ground term, then we have \(\mathcal{T} \vdash t = \numeral{k}\) for some \(k\) and we let \(t' = \numeral{k}\) and \(N = 0\).
  If \(t = x\), then we let \(N = 0\) and \(t = t'\).
  If \(t = s(u)\), where \(u\) is a term, then by the induction hypothesis there exists \(N'\) and a \(p\)-free \(u'\) such that \(\mathcal{T} \vdash u(s^{N'}(x)) = u'\).
  Hence we have \(\mathcal{T} \vdash t(s^{N}(x)) = s(u(s^{N}(x))) = s(u')\).
  Thus we let \(N \coloneqq N'\) and \(t' = s(u')\).
  If \(t = p(u)\), then by the induction hypothesis we have some \(N'\) and a \(p\)-free \(u'\) such that \(\mathcal{T} \vdash u(s^{N'}(x)) = u'\).
  Hence by Lemma~\ref{lem:38} we have \(\mathcal{T} \vdash p(u(s^{N' + 1}(x))) = p(u'(s(x))) = p(s(u^{\prime\prime})) = u^{\prime\prime}\), for some \(p\)-free term \(u''\) and we let \(N \coloneqq N' + 1\) and \(t' = u^{\prime\prime}\).
  If \(t = u_{1} + u_{2}\), then by the induction hypothesis there exists for \(i \in \{ 1, 2 \}\) a natural number \(N_{i}\) and a \(p\)-free term \(u_{i}'\) such that \(\mathcal{T} \vdash u_{i}(s^{N_{i}}(x)) = u_{i}'\).
  Let \(N = \max \{ N_{1}, N_{2} \}\), then we have \(\mathcal{T} \vdash t(s^{N}(x)) = u_{1}(s^{N}(x)) + u_{2}(s^{N}(x)) = u_{1}'(s^{N - N_{1}}(x)) + u_{2}'(s^{N - N_{2}}(x))\), thus we let \(t' = u_{1}'(s^{N - N_{1}}(x)) + u_{2}'(s^{N - N_{2}}(x))\).
\end{proof}
\begin{lemma}
  \label{lem:19}
  Let \(\varphi(x)\) be a formula, then there exists \(N \in \mathbb{N}\) and a \(p\)-free formula \(\varphi'(x)\) such that \(\mathcal{T} \vdash \varphi(s^{N}(x)) \leftrightarrow \varphi'\).
\end{lemma}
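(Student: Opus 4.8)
The plan is to prove the statement by induction on the structure of $\varphi(x)$, reducing the Boolean cases to the term-level result Lemma~\ref{lem:37}. The governing intuition is that substituting $s^{N}(x)$ for $x$ only pushes successors \emph{inward} toward the leaves of the occurring terms, and Lemma~\ref{lem:37} guarantees that sufficiently many leading successors absorb every occurrence of $p$ in a single-variable term. The one genuine task is to select a single $N$ that works simultaneously for all atomic subformulas. Throughout I would rely on two elementary facts: provability is closed under substitution for the free variable, so that $\mathcal{T} \vdash \alpha(x) \leftrightarrow \beta(x)$ entails $\mathcal{T} \vdash \alpha(s^{j}(x)) \leftrightarrow \beta(s^{j}(x))$ for every $j \in \mathbb{N}$; and $p$-freeness of a formula is preserved when the $p$-free term $s^{j}(x)$ is substituted for $x$.

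For the base case, let $\varphi$ be atomic, hence of the form $t_{1}(x) = t_{2}(x)$, since $=$ is the only predicate symbol of $L(\mathcal{T})$. Applying Lemma~\ref{lem:37} to each $t_{i}$ yields $N_{i} \in \mathbb{N}$ and a $p$-free term $t_{i}'$ with $\mathcal{T} \vdash t_{i}(s^{N_{i}}(x)) = t_{i}'$. Setting $N \coloneqq \max\{N_{1}, N_{2}\}$ and instantiating $x$ by $s^{N - N_{i}}(x)$, I obtain $\mathcal{T} \vdash t_{i}(s^{N}(x)) = t_{i}'(s^{N - N_{i}}(x))$, whose right-hand sides are $p$-free. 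Hence $\varphi' \coloneqq (t_{1}'(s^{N - N_{1}}(x)) = t_{2}'(s^{N - N_{2}}(x)))$ is $p$-free and satisfies $\mathcal{T} \vdash \varphi(s^{N}(x)) \leftrightarrow \varphi'$.

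For the induction step I would treat the Boolean connectives. If $\varphi = \neg\psi$, the induction hypothesis gives $M$ and a $p$-free $\psi'$ with $\mathcal{T} \vdash \psi(s^{M}(x)) \leftrightarrow \psi'$, and I take $N \coloneqq M$, $\varphi' \coloneqq \neg\psi'$. If $\varphi = \psi_{1} \circ \psi_{2}$ with $\circ \in \{\wedge, \vee, \rightarrow\}$, the induction hypothesis gives $M_{1}, M_{2}$ and $p$-free $\psi_{1}', \psi_{2}'$; taking $N \coloneqq \max\{M_{1}, M_{2}\}$ and instantiating $x$ by $s^{N - M_{i}}(x)$ in each equivalence yields $\mathcal{T} \vdash \psi_{i}(s^{N}(x)) \leftrightarrow \psi_{i}'(s^{N - M_{i}}(x))$, so that $\varphi' \coloneqq \psi_{1}'(s^{N - M_{1}}(x)) \circ \psi_{2}'(s^{N - M_{2}}(x))$ is $p$-free and provably equivalent to $\varphi(s^{N}(x))$.

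The only delicate point I anticipate is the bookkeeping that reconciles the various exponents: each subobject is cleaned up at its \emph{own} threshold, and one must raise every threshold to the common value $N = \max_{i} N_{i}$ by substituting further successors, using the substitution-closure fact above to keep the equivalences valid and the observation that each $s^{N - N_{i}}(x)$ is itself $p$-free. I expect no substantial difficulty beyond this. In particular, in the present open-induction setting $\varphi$ is quantifier-free, so no quantifier case arises, which is fortunate: a leading $\exists y$ or $\forall y$ could not be handled this way, as substituting $s^{M}(y)$ for the bound variable $y$ does not preserve the meaning of the quantifier, $s^{M}$ being non-surjective in models of $\mathcal{T}$.
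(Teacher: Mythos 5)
Your proof is correct and takes essentially the same route as the paper's: atoms \(t_{1}(x) = t_{2}(x)\) are handled via Lemma~\ref{lem:37} with the \(\max\)-threshold bookkeeping and substitution of extra successors \(s^{N-N_{i}}(x)\), and the propositional structure is then carried along; the paper merely performs this second step in one shot (collecting all atoms \(\theta_{1},\dots,\theta_{n}\) of \(\varphi\) and replacing each \(\theta_{i}(s^{M}(x))\) by \(\theta_{i}'(s^{M-M_{i}}(x))\) simultaneously) rather than by structural induction on the connectives. Your closing caveat about quantifiers is consistent with the paper, whose proof likewise only covers the quantifier-free case needed for the open-induction application in Lemma~\ref{lem:36}.
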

\begin{proof}
  Let \(\theta\) an atom, then \(\theta\) is of the form \(t_{1} = t_{2}\), then apply Lemma~\ref{lem:37} twice in order to obtain \(N_{1}\), \(N_{2}\) and the \(p\)-free terms \(t_{1}'(x)\) and \(t_{2}'(x)\).
  Now let \(N \coloneqq \max \{ N_{1}, N_{2}\}\) and observe that \(\mathcal{T} \vdash \theta(s^{N}(x)) \leftrightarrow t_{1}'(s^{N - N_{1}}(x)) = t_{2}'(s^{N - N_{2}}(x))\).
  
  Let \(\theta_{1}(x), \dots, \theta_{n}(x)\) be all the atoms of \(\varphi\).
  Let \(i \in \{ 1, \dots, n\}\), then apply the argument above to \(\theta_{i}\) in order to obtain a natural number \(M_{i}\) and a \(p\)-free atom \(\theta_{i}'\) such that \(\mathcal{T} \vdash \theta(s^{M_{i}}(x)) \leftrightarrow \theta_{i}'\).
  Let \(M = \max \{ M_{i} \mid i = 1, \dots, n\}\) and obtain \(\varphi'\) by replacing in \(\varphi(s^{M}(x))\) every atom \(\theta_{i}(s^{M}(x))\) by \(\theta_{i}'(s^{M - M_{i}}(x))\).
  Clearly we have \(\mathcal{T} \vdash \varphi(s^{M}(x)) \leftrightarrow \varphi'\).
\end{proof}
We can now ``factor'' the symbols \(p\) out of the induction schema.
The idea is instead of starting the induction at \(0\) we start the induction at some \(N \in \mathbb{N}\) that is large enough, so that we can eliminate \(p\) according to the lemma above.
\begin{lemma}
  \label{lem:36}
  \(\mathcal{T} + \eqref{B:1} + \INDParameterFree{\Open(L')} \vdash \INDParameterFree{\Open(\mathcal{T})}\).
\end{lemma}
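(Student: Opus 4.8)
The plan is to derive, inside $\mathcal{T} + \eqref{B:1} + \IND{\Open^{-}(L')}$, each instance $I_x\varphi$ of the schema $\IND{\Open^{-}(\mathcal{T})}$, the idea being to shift the induction far enough to the right that the symbol $p$ disappears. So I fix a quantifier-free $\varphi(x)$ with at most the variable $x$ free (possibly containing $p$). By Lemma~\ref{lem:19} there are a number $N \in \mathbb{N}$ and a $p$-free quantifier-free formula $\varphi'(x)$, again with at most $x$ free and hence with $\varphi' \in \Open^{-}(L')$, such that $\mathcal{T} \vdash \varphi(s^N(x)) \leftrightarrow \varphi'(x)$.

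Next I would work inside the theory, assume the two premises $\varphi(0)$ and $\forall x(\varphi(x) \rightarrow \varphi(s(x)))$ of $I_x\varphi$, and aim for $\forall x\varphi(x)$. Instantiating the step premise repeatedly at $0, \numeral{1}, \numeral{2}, \dots$ yields $\varphi(\numeral{k})$ for every fixed $k$, in particular for all $k \le N$. These same two premises verify the premises of the induction axiom for $\varphi'$: the base $\varphi'(0)$ holds because $\varphi'(0) \leftrightarrow \varphi(s^N(0)) = \varphi(\numeral{N})$, which has just been derived; and the step $\varphi'(x) \rightarrow \varphi'(s(x))$ reduces, modulo $\mathcal{T}$, to $\varphi(s^N(x)) \rightarrow \varphi(s(s^N(x)))$, which is the step premise instantiated at the term $s^N(x)$ (note $s^N(s(x))$ and $s(s^N(x))$ are the same term). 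An application of $\IND{\Open^{-}(L')}$ to $\varphi'$ then delivers $\forall x\varphi'(x)$, that is, $\forall x\varphi(s^N(x))$.

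It remains to cover the finitely many small arguments and to glue the two halves together. The key auxiliary fact, to be proved in $\mathcal{T} + \eqref{B:1}$ by external induction on $N$, is the case split
\[
  x = 0 \vee x = \numeral{1} \vee \dots \vee x = \numeral{N-1} \vee x = s^N(p^N(x)),
\]
obtained by applying \eqref{B:1} to the successive predecessors $x, p(x), \dots, p^{N-1}(x)$: at each stage either the current predecessor equals $0$, pinning $x$ to a numeral, or \eqref{B:1} peels off one further successor. Given an arbitrary $x$, the first $N$ disjuncts are discharged by the already-derived $\varphi(\numeral{k})$, while in the last case $x = s^N(p^N(x))$ is an instance $s^N(y)$ with $y \coloneqq p^N(x)$, so $\forall x\varphi(s^N(x))$ yields $\varphi(x)$. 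Hence $\forall x\varphi(x)$, which completes the derivation of $I_x\varphi$ and thus of the whole schema.

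The only genuinely delicate point is this case-split lemma and the bookkeeping around it. One must check that \eqref{B:1} really permits stripping $N$ successors off an arbitrary element; this is exactly where \eqref{B:1} is needed rather than merely \eqref{ax:P:s}, since in a model of $\mathcal{T}$ alone the successor function need not be surjective onto the nonzero elements. One must also confirm that the shifted formula $\varphi'$ produced by Lemma~\ref{lem:19} still has at most one free variable, so that it is a legitimate instance of $\IND{\Open^{-}(L')}$. Everything else is routine equational reasoning in $\mathcal{T}$.
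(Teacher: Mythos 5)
Your proposal is correct and follows essentially the same route as the paper's proof: both invoke Lemma~\ref{lem:19} to shift $\varphi$ by $s^N$ into a $p$-free formula, handle the arguments below $\numeral{N}$ by iterating the step premise from the base case, use \eqref{B:1} to split an arbitrary element into the cases $x = \numeral{k}$ ($k < N$) or $x = s^N(p^N(x))$, and discharge the last case by applying the $p$-free induction schema to the shifted formula and instantiating at $p^N(x)$. The only difference is organizational (you run the induction on $\varphi'$ before the case split, the paper after), and your explicit statement of the disjunctive case-split lemma makes precise what the paper compresses into ``by an $N-1$ fold application of \eqref{B:1}.''
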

\begin{proof}
  Let \(\varphi(x)\) be an \(L(\mathcal{T})\) formula.
  We want to show \(I_{x}\varphi(x)\).
  By Lemma~\ref{lem:19} above we obtain an \(N \in \mathbb{N}\) and a \(p\)-free formula \(\psi\) such that \(\mathcal{T} \vdash \varphi(s^{N}(x)) \leftrightarrow \psi(x)\).
  Now we work in \(\mathcal{T} + \eqref{B:1} + \INDParameterFree{\Open(L')}\) and assume \(\varphi(0)\) and \(\varphi(x) \rightarrow \varphi(s(x))\) and we want to show \(\varphi(x)\).
  Hence by a \(N - 1\) fold application of Lemma \eqref{B:1} it suffices to show \(\varphi(0)\), \(\varphi(\numeral{1})\), \dots, \(\varphi(s^{N}p^{N}(x))\).
  By starting with \(\varphi(0)\) and iterating \(\varphi(x) \rightarrow \varphi(s(x))\) we obtain \(\varphi(\numeral{n})\) for all \(n \in \mathbb{N}\).
  Hence it remains to show \(\varphi(s^{N}(p^{N}(x)))\).
  We proceed by induction on \(\psi\).
  For the induction base we have to show \(\psi(0)\) which is equivalent to \(\varphi(\numeral{N})\), hence we are done.
  For the induction step we assume \(\psi(x)\) and we have to show \(\psi(s(x))\).
  We have \(\psi(x) \leftrightarrow \varphi(s^{N}(x))\) and by \(\Forall{x}{\left(\varphi(x) \rightarrow \varphi(s(x))\right)}\) we obtain \(\varphi(s^{N}(x)) \rightarrow \varphi(s^{N + 1}(x))\) thus by modus ponens \(\varphi(s^{N + 1}(x))\) which is equivalent to \(\psi(s(x))\).
  This completes the induction step.
  By the induction we thus obtain \(\psi(x)\), and in particular \(\psi(p^{N}(x))\) which is equivalent to \(\varphi(s^{N}p^{N}(x))\).
  This completes the proof.
\end{proof}
As an immediate consequence of the above lemma we can factor all the occurrences of \(p/1\) in the induction formulas into a single axiom.
\begin{proposition}
  \label{pro:8}
  \(\mathcal{T} + \INDParameterFree{\Open(L(\mathcal{T}))} \equiv \mathcal{T} + \eqref{B:1} + \INDParameterFree{\Open(L')}\).
\end{proposition}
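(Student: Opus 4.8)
The plan is to obtain both halves of the equivalence almost for free from the two immediately preceding lemmas, since the genuine content has already been isolated there. Write $T_{0} := \mathcal{T} + \IND{\Open(L(\mathcal{T}))^{-}}$ for the left-hand theory and $T_{1} := \mathcal{T} + \eqref{B:1} + \IND{\Open(L')^{-}}$ for the right-hand one. By Definition~\ref{def:25} it suffices to show $T_{0} \vdash T_{1}$ and $T_{1} \vdash T_{0}$, and in each direction the axioms of $\mathcal{T}$ are shared, so only the remaining axioms need attention.

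For the direction $T_{1} \vdash T_{0}$ the only nontrivial axioms of $T_{0}$ to recover are the instances of $\IND{\Open(L(\mathcal{T}))^{-}}$. This is precisely the conclusion of Lemma~\ref{lem:36}, which states $\mathcal{T} + \eqref{B:1} + \IND{\Open^{-}(L')} \vdash \IND{\Open^{-}(\mathcal{T})}$. Hence this direction is immediate once Lemma~\ref{lem:36} is in hand.

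For the direction $T_{0} \vdash T_{1}$ there are again only two things beyond the shared $\mathcal{T}$. First, the axiom \eqref{B:1}, namely $x \neq 0 \rightarrow x = s(p(x))$, is propositionally equivalent to the formula $x = 0 \vee x = s(p(x))$ appearing as \eqref{lem:35:1}, which Lemma~\ref{lem:35} already shows to be a theorem of $T_{0}$; so $T_{0} \vdash \eqref{B:1}$. Second, since $L'$ is just $L(\mathcal{T})$ with $p$ removed we have $L' \subseteq L(\mathcal{T})$, and therefore every $\Open(L')^{-}$ formula is in particular an $\Open(L(\mathcal{T}))^{-}$ formula; consequently $\IND{\Open(L')^{-}} \subseteq \IND{\Open(L(\mathcal{T}))^{-}}$ and these induction axioms are contained in $T_{0}$ outright. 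Combining the two directions gives $T_{0} \equiv T_{1}$.

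I do not expect any real obstacle at this stage: the substantive work — eliminating $p$ from an arbitrary open formula by shifting the start of the induction to a sufficiently large numeral — has been carried out in Lemmas~\ref{lem:19} and~\ref{lem:36}, and the present proposition merely packages those results together with the provability of \eqref{lem:35:1} from Lemma~\ref{lem:35} and the trivial language inclusion $L' \subseteq L(\mathcal{T})$. The one point to state carefully is the logical equivalence of \eqref{B:1} with \eqref{lem:35:1}, which is a routine propositional rewriting.
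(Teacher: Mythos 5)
Your proposal is correct and takes essentially the same approach as the paper, which obtains the nontrivial direction as an immediate consequence of Lemma~\ref{lem:36} and leaves the converse implicit (axiom \eqref{B:1} from Lemma~\ref{lem:35}, and $\IND{\Open(L')^{-}} \subseteq \IND{\Open(L(\mathcal{T}))^{-}}$ from the language inclusion $L' \subseteq L(\mathcal{T})$). You have merely spelled out that easy direction explicitly, which matches the paper's intent.
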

Over \(\mathbb{Z}\) \(p\)-free atoms in one variable represent equations between two linear functions as can be easily seen.
Linear functions have the nice property that either they coincide everywhere or else they intersect in at most one point.
This property of linear functions allows us to analyze the truth values of a quantifier-free formula in one point. 
The idea is that this property allows us to define a radius, beyond which an atom behaves on the positive integers just like on the negative integers.
We will now define an \(L(\mathcal{T})\) structure that has an analogous property.
\begin{definition}
  \label{def:12}
  Let \(\mathcal{M}\) be the \(L(\mathcal{T})\) structure whose domain is the set of pairs \((b, n) \in \{0,1\} \times \mathbb{Z}\) such that \(b = 0\) implies \(n \in \mathbb{N}\) and that interprets the function symbols \(0\), \(s\), \(p\), and \(+\) as follows
  \begin{gather*}
    0^{\mathcal{M}} = (0, 0), \\
    s^{\mathcal{M}}((b,n)) = (b, n+1), \\
    p^{\mathcal{M}}((0, n)) = (0, n \, \dot{-} \, 1), \\
    p^{\mathcal{M}}((1, n)) = (1, n - 1), \\
    (b_{1}, n_{1}) +^{\mathcal{M}} (b_{2}, n_{2}) = (\max\{b_{1},b_{2}\}, n_{1} + n_{2}),
  \end{gather*}
  where \(b, b_{1}, b_{2} \in \{0,1\}\) and \(n, n_{1}, n_{2} \in \mathbb{N}\).
\end{definition}
It is clear that the structure \(\mathcal{M}\) is indeed an \(L(\mathcal{T})\) structure.
\begin{lemma}
  \label{lem:30}
  \(\mathcal{M} \models \mathcal{T} + \eqref{B:1}\).
\end{lemma}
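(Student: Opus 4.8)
The plan is to verify directly that each axiom of \(\mathcal{T}\) together with \eqref{B:1} holds under the interpretation given in Definition~\ref{def:12}. It is helpful to first picture \(\mathcal{M}\) as consisting of two disjoint ``blocks'': the standard block \(\{(0,n) \mid n \in \mathbb{N}\}\), on which \(s\), \(p\) and \(+\) behave exactly as the successor, the truncated predecessor, and addition on \(\mathbb{N}\); and a nonstandard block \(\{(1,n) \mid n \in \mathbb{Z}\}\), which is a full \(\mathbb{Z}\)-chain under \(s\) and \(p\). The first component is never decreased by any operation (it is preserved by \(s\) and \(p\) and takes a maximum under \(+\)), so the domain condition \(b = 0 \Rightarrow n \in \mathbb{N}\) is respected throughout; note that the truncated subtraction in \(p^{\mathcal{M}}((0,n))\) is precisely what guarantees that \(p\) maps the standard block into itself.

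The verification then proceeds axiom by axiom, in each case splitting on the value of the first component. Axioms \eqref{ax:A:0} and \eqref{ax:A:s} are immediate from the definition of \(+^{\mathcal{M}}\), since \(\max\{b,0\} = b\) and the second components obey the usual recursion for addition on \(\mathbb{Z}\). Axiom \eqref{ax:D} holds because \(s^{\mathcal{M}}\) always increments the second component, so its value \((b, n+1)\) can never equal \((0,0)\): if \(b = 1\) the first components differ, and if \(b = 0\) then \(n \in \mathbb{N}\) forces \(n + 1 \geq 1\). Axiom \eqref{ax:P:0} is the single computation \(p^{\mathcal{M}}((0,0)) = (0, 0 \dot{-} 1) = (0,0)\).

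The only places where the truncation could interfere are \eqref{ax:P:s} and \eqref{B:1}, and here I would argue that it never does. For \eqref{ax:P:s}, applying \(s\) first produces \((b, n+1)\) whose second component is at least \(1\) in the standard block, so the truncated subtraction coincides with ordinary subtraction and we recover \((b,n)\); in the nonstandard block \(p\) is ordinary subtraction anyway. For \eqref{B:1}, the hypothesis \(x \neq 0\) excludes precisely the bottom element \((0,0)\) of the standard block, so in the standard case \(n \geq 1\) and again truncation is inactive, giving \(s^{\mathcal{M}}(p^{\mathcal{M}}((0,n))) = (0,n)\); the nonstandard case is unconditional, since \(p\) and \(s\) are genuine inverses there. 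I do not expect a genuine obstacle: the construction is engineered so that truncation only ``bites'' at \((0,0)\), which is exactly the element that \eqref{ax:P:0} handles separately and that \eqref{B:1} excludes by hypothesis. The whole lemma therefore reduces to these routine case checks.
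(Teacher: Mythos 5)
Your proof is correct and follows essentially the same route as the paper's: a direct axiom-by-axiom verification of \eqref{ax:D}--\eqref{ax:A:s} and \eqref{B:1} against the definition of \(\mathcal{M}\). Your treatment is in fact somewhat more explicit than the paper's (which compresses \eqref{ax:D} and \eqref{B:1} into one-line remarks about predecessors), particularly in isolating where the truncated subtraction in \(p^{\mathcal{M}}\) could matter and showing it only affects \((0,0)\).
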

\begin{proof}
  The element \(0^{\mathcal{M}}\) clearly has no predecessor.
  Furthermore \(p^{\mathcal{M}}0^{\mathcal{M}} = (0,0) = 0^{\mathcal{M}}\).
  We have \(p^{\mathcal{M}}(s^{\mathcal{M}}(b,n))) = (b,n)\).
  Moreover \((b,n) +^{\mathcal{M}} (0, 0) = (b, n)\) and
  \begin{multline*}
    (b_{1},n_{1}) +^{\mathcal{M}} s^{\mathcal{M}}((b_{2},n_{2})) = (b_{1}, n_{1}) +^{\mathcal{M}}(b_{2}, n_{2} + 1) \\ = (\max\{b_{1},b_{2}\}, n_{1} + n_{2} + 1) = s^{\mathcal{M}}((\max\{b_{1}, b_{2}\}, n_{1} + n_{2})) \\ = s^{\mathcal{M}}((b_{1},n_{1}) +^{\mathcal{M}} (b_{2}, n_{2})).
  \end{multline*}
  Finally, observe that every element that is not \(0^{\mathcal{M}}\) has a predecessor.
\end{proof}
\begin{lemma}
  \label{lem:29}
  Let \(t(x)\) be a \(p\)-free term containing the variable \(x\), then
  \begin{equation}
    \label{eq:1}
    t^{\mathcal{M}}((b,n)) = (b, t^{\mathbb{Z}}(n)).
  \end{equation}
\end{lemma}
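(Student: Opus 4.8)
The plan is to prove the identity by structural induction on the $p$-free term $t$, using the hypothesis that $t$ contains $x$ in an essential way. Before starting the induction I would isolate the one auxiliary fact that makes the argument go through: every ground $p$-free term $g$ satisfies $g^{\mathcal{M}} = (0, g^{\mathbb{Z}})$, i.e. its first coordinate is always $0$. This follows by a trivial induction on $g$, since $0^{\mathcal{M}} = (0,0)$, the successor $s^{\mathcal{M}}$ leaves the first coordinate unchanged, and $+^{\mathcal{M}}$ combines two first coordinates by $\max\{0,0\} = 0$; note also that $g^{\mathbb{Z}} \in \mathbb{N}$, so $(0, g^{\mathbb{Z}})$ indeed lies in the domain of $\mathcal{M}$.

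For the base case, the only atomic $p$-free term containing $x$ is $t = x$ itself, and here $t^{\mathcal{M}}((b,n)) = (b,n) = (b, t^{\mathbb{Z}}(n))$ directly. For the inductive step, since $t$ is $p$-free and non-ground it must have the form $s(u)$ or $u_1 + u_2$. In the case $t = s(u)$ the subterm $u$ still contains $x$, so the induction hypothesis gives $u^{\mathcal{M}}((b,n)) = (b, u^{\mathbb{Z}}(n))$, and applying $s^{\mathcal{M}}$ yields $(b, u^{\mathbb{Z}}(n)+1) = (b, t^{\mathbb{Z}}(n))$ as required.

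The interesting case is $t = u_1 + u_2$, where only one of the two summands need contain $x$. If both $u_1$ and $u_2$ contain $x$, the induction hypothesis applies to each and the first coordinates agree, so $+^{\mathcal{M}}$ produces first coordinate $\max\{b,b\} = b$ and second coordinate $u_1^{\mathbb{Z}}(n) + u_2^{\mathbb{Z}}(n) = t^{\mathbb{Z}}(n)$. If instead, say, $u_1$ contains $x$ but $u_2$ is ground, I would apply the induction hypothesis to $u_1$ and the auxiliary fact to $u_2$, obtaining $u_1^{\mathcal{M}}((b,n)) = (b, u_1^{\mathbb{Z}}(n))$ and $u_2^{\mathcal{M}} = (0, u_2^{\mathbb{Z}})$; then $+^{\mathcal{M}}$ gives first coordinate $\max\{b,0\} = b$ and second coordinate $u_1^{\mathbb{Z}}(n) + u_2^{\mathbb{Z}} = t^{\mathbb{Z}}(n)$. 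The symmetric subcase, with $u_2$ containing $x$, is identical.

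The main obstacle is precisely this addition case with a ground summand: without the observation that ground $p$-free terms carry first coordinate $0$, the $\max$ appearing in $+^{\mathcal{M}}$ could corrupt the tracked coordinate $b$. This is also exactly where the hypothesis that $t$ contains $x$ is used, since a ground term would force the first coordinate to $0$ regardless of $b$, so the claimed identity genuinely requires the occurrence of $x$.
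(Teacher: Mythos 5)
Your proof is correct and follows essentially the same structural induction on $t$ as the paper, with the same case split in the addition step according to which summands contain $x$. In fact your writeup is slightly more careful than the paper's: you explicitly handle the $t = s(u)$ case (which the paper's proof silently omits) and you isolate as a separate auxiliary fact that ground $p$-free terms evaluate to $(0, g^{\mathbb{Z}})$, a fact the paper invokes without justification under the label ``induction hypothesis'' even though its induction hypothesis only covers terms containing $x$.
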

\begin{proof}
  We proceed by induction on the structure of the term \(t\).
  If \(t\) is the variable \(x\), then \(t^{\mathcal{M}}(b,n) = (b,n) = (b, t^{\mathbb{Z}}(n))\).
  If \(t = t_{1} + t_{2}\), then either \(t_{1}\) or \(t_{2}\) is not ground.
  If \(t_{1}\) contains \(x\) and \(t_{2}\) does not contain \(x\), then we have by the induction hypothesis \(t_{1}^{\mathcal{M}}(b,n) = (b, t_{1}^{\mathbb{Z}}(n))\) and \(t_{2}^{\mathcal{M}}(b,n) = (0, t_{2}^{\mathbb{N}})\).
  Hence \(t^{\mathcal{M}}(b,n) = (b, t_{1}^{\mathbb{Z}}(n) + t_{2}^{\mathbb{N}}) = (b, t^{\mathbb{Z}}(n))\).
  If both \(t_{1}\) and \(t_{2}\) contain \(x\), then we have by the induction hypothesis \(t_{1}^{\mathcal{M}}(b,n) = (b, t_{1}^{\mathbb{Z}}(n))\) and \(t_{2}^{\mathcal{M}}(b,n) = (b, t_{2}^{\mathbb{Z}}(n))\).
  Hence \(t^{\mathcal{M}}(b,n) = (b, t_{1}^{\mathbb{Z}}(n) + t_{2}^{\mathbb{Z}}(n)) = (b, t^{\mathbb{Z}}(n))\).
\end{proof}
The following lemma expresses the informal idea discussed above that an atom is determined outside of some finite radius.
\begin{lemma}
  \label{lem:34}
  Let \(\theta(x)\) be a \(p\)-free atom, then there exists \(N \in \mathbb{N}\) such that for all \(n \geq N\)
  \begin{equation}
    \label{eq:43}
    \mathcal{M} \models \theta((1, -n)) \Longleftrightarrow \mathbb{N} \models \theta(n).
  \end{equation}
\end{lemma}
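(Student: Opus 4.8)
The plan is to reduce the atom to a comparison of two linear functions and then exploit that the eventual behaviour of such a comparison at \((1,-n)\) in \(\mathcal{M}\) and at \(n\) in \(\mathbb{N}\) is governed by the same data. First I would write \(\theta(x)\) as \(t_1(x) = t_2(x)\) with \(t_1,t_2\) being \(p\)-free terms, and recall that any \(p\)-free term in the single variable \(x\) represents a linear function: there are \(a_i,c_i \in \mathbb{N}\) with \(t_i^{\mathbb{Z}}(n) = a_i \cdot n + c_i\), where \(a_i \geq 1\) exactly when \(t_i\) contains \(x\) (a straightforward induction on the term structure, as already observed in the text preceding the lemma). I would then record the two evaluation rules I need: by Lemma~\ref{lem:29}, if \(t_i\) contains \(x\) then \(t_i^{\mathcal{M}}((1,-n)) = (1, -a_i n + c_i)\), whereas a ground \(p\)-free term always evaluates to \((0,c_i)\) in \(\mathcal{M}\), since \(0^{\mathcal{M}}\) has first coordinate \(0\) and both \(s^{\mathcal{M}}\) and \(+^{\mathcal{M}}\) can only produce a first coordinate \(1\) from an argument already carrying \(1\).

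With these in hand I would split into three cases according to which of \(t_1,t_2\) contain \(x\). If both are ground, then both sides of \eqref{eq:43} are independent of \(n\) and reduce to \(c_1 = c_2\), so the equivalence holds for every \(n\). If exactly one contains \(x\), say \(t_1\), then in \(\mathcal{M}\) the two terms have first coordinates \(1\) and \(0\), so \(\theta((1,-n))\) fails for all \(n\); and in \(\mathbb{N}\) the value \(a_1 n + c_1\) with \(a_1 \geq 1\) eventually exceeds the constant \(c_2\), so \(\theta(n)\) fails for all sufficiently large \(n\), and the two sides agree cofinitely. Finally, if both contain \(x\), then \(\mathbb{N} \models \theta(n)\) amounts to \(a_1 n + c_1 = a_2 n + c_2\) while \(\mathcal{M} \models \theta((1,-n))\) amounts to \(-a_1 n + c_1 = -a_2 n + c_2\); each of these is a linear equation in \(n\), hence either an identity (precisely when \(a_1 = a_2\) and \(c_1 = c_2\)) or satisfied by at most one value of \(n\).

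The crux, and the step that makes the lemma work, is the observation that although the finitely many exceptional points differ between the \(\mathbb{N}\)-side and the \(\mathcal{M}\)-side (one uses \(+a_i n\), the other \(-a_i n\)), the cofinite truth value is the same on both sides: each holds for all large \(n\) iff \(a_1 = a_2\) and \(c_1 = c_2\), and otherwise fails for all large \(n\). Thus in every case each side of \eqref{eq:43} has a well-defined eventual truth value and these two eventual values coincide; taking \(N\) to be the maximum of the finitely many thresholds arising in the three cases then yields the claim. I expect the only delicate point to be exactly this stabilisation argument, namely that passing to the negative copy flips the sign of the coefficient of \(n\) but not the condition for the two linear functions to be globally equal; everything else is routine evaluation via Lemma~\ref{lem:29}.
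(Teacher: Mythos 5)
Your proof is correct and follows essentially the same route as the paper's: the same case split according to which of \(t_1,t_2\) contain \(x\), the same use of Lemma~\ref{lem:29} to transfer evaluation at \((1,-n)\) to the integers, and the same dichotomy that an equation of linear functions is either an identity or holds at most once. The only difference is presentational—you make the coefficients \(a_i n + c_i\) explicit, where the paper argues abstractly in \(\mathbb{Z}\) that an atom true at more than one point is true everywhere.
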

\begin{proof}
  Let \(\theta(x) \coloneqq (t_{1} = t_{2})\).
  If \(t_{1}\) and \(t_{2}\) do not contain \(x\), then the claim holds trivially.
  If \(t_{1}\) and \(t_{2}\) both contain \(x\), then by Lemma~\ref{lem:29} we have
  \begin{equation}
    \label{eq:44}
    \mathcal{M} \models \theta((1,-n)) \Leftrightarrow \mathbb{Z} \models \theta(-n),
  \end{equation}
  for all \(n \in \mathbb{N}\).
  In \(\mathbb{Z}\) the atom \(\theta\) is an equation between two linear functions.
  Hence there are two cases to consider.
  If \(\theta\) is true in \(\mathbb{Z}\) in at most one point, then there exists \(N \in \mathbb{N}\) such that for all \(m \in \mathbb{Z}\) with \(|m| \geq N\) we have \(\mathbb{Z} \not \models \theta(m)\).
  Thus \(\mathcal{M} \not \models \theta((1,-m))\) and \(\mathbb{N} \not \models \theta(m)\) for \(m \geq N\).
  Otherwise, if \(\theta\) is true in more than one point of \(\mathbb{Z}\), then \(\theta\) is true everywhere in \(\mathbb{Z}\) and we have \(\mathcal{M} \models \theta((1,-m))\) and \(\mathbb{N} \models \theta(m)\) for all \(m \geq 0\).
  If \(t_{1}\) contains \(x\), but \(t_{2}\) does not contain \(x\), then clearly \(\mathcal{M} \not \models \theta((1, m))\) for all \(m \in \mathbb{Z}\).
  Moreover \(\mathcal{M} \models \theta((0,m))\) if and only if \(\mathbb{N} \models \theta(m)\) for all \(m \geq 0\).
  Clearly \(\mathbb{N} \models \theta(m)\) for at most one \(m \in \mathbb{N}\), hence there exists \(N \in \mathbb{N}\) such that \(\mathbb{N} \not \models \theta(m)\) for all \(m \geq N\).
  Hence we have \(\mathcal{M} \not \models \theta((1, -m))\) and \(\mathcal{M} \not \models \theta((0, m))\) for all \(m \geq N\).
  This completes the proof.
\end{proof}
Thanks to the property shown in the lemma above, we can now quite easily show that \(\mathcal{M}\) is a model of \(\INDParameterFree{\Open(L')}\).
\begin{lemma}
  \label{lem:32}
  \(\mathcal{M} \models \INDParameterFree{\Open(L')}\).
\end{lemma}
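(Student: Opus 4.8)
The plan is to verify the induction axiom $I_x\varphi$ directly in $\mathcal{M}$ for each $p$-free quantifier-free formula $\varphi(x)$ with at most one free variable (so that, in particular, there are no induction parameters). Fix such a $\varphi$, assume $\mathcal{M} \models \varphi(0^{\mathcal{M}})$ and $\mathcal{M} \models \forall x(\varphi(x) \rightarrow \varphi(s(x)))$, and aim to show $\mathcal{M} \models \forall x \varphi(x)$. The domain of $\mathcal{M}$ splits into the standard chain $\{(0,n) \mid n \in \mathbb{N}\}$, which is generated from $0^{\mathcal{M}} = (0,0)$ by $s^{\mathcal{M}}$, and a disjoint $\mathbb{Z}$-indexed chain $\{(1,n) \mid n \in \mathbb{Z}\}$ on which $s^{\mathcal{M}}$ acts as $n \mapsto n+1$ but which has no $s$-minimal element. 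The argument naturally treats these two parts separately.

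First I would handle the standard part. Since $s^{\mathcal{M}}((0,n)) = (0,n+1)$ and $(0,0) = 0^{\mathcal{M}}$, an ordinary (external) induction on $n$ using the base case and the assumed induction step yields $\mathcal{M} \models \varphi((0,n))$ for every $n \in \mathbb{N}$. Moreover, a short computation using Lemma~\ref{lem:29} shows that each $p$-free atom, hence $\varphi$ itself, satisfies $\mathcal{M} \models \varphi((0,n)) \iff \mathbb{N} \models \varphi(n)$: on the $(0,-)$ chain every term evaluates into an element whose first coordinate is $0$ and whose second coordinate agrees with the evaluation over $\mathbb{N}$. Consequently $\mathbb{N} \models \varphi(n)$ for all $n \in \mathbb{N}$, a fact I will reuse.

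The hard part is the non-standard chain, because no element $(1,m)$ is reachable from $0^{\mathcal{M}}$ by successors, so the assumed induction step alone provides no foothold. Here Lemma~\ref{lem:34} supplies exactly what is needed: since $\varphi$ is a Boolean combination of finitely many $p$-free atoms, taking $N$ to be the maximum of the bounds furnished by Lemma~\ref{lem:34} for these atoms gives $\mathcal{M} \models \varphi((1,-n)) \iff \mathbb{N} \models \varphi(n)$ for all $n \geq N$. Combined with $\mathbb{N} \models \varphi(n)$ from the previous paragraph, this yields $\mathcal{M} \models \varphi((1,m))$ for every $m \leq -N$; in particular $\varphi$ holds at the base point $(1,-N)$.

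Finally I would propagate this foothold upward along the whole $\mathbb{Z}$-chain using the assumed induction step, which holds at every element of the domain, including those of the form $(1,m)$. Since $s^{\mathcal{M}}((1,m)) = (1,m+1)$, iterating $\varphi(x) \rightarrow \varphi(s(x))$ starting from $(1,-N)$ gives $\mathcal{M} \models \varphi((1,m))$ for all $m \geq -N$; together with the range $m \leq -N$ already obtained, this covers all $m \in \mathbb{Z}$. Hence $\varphi$ holds at every element of $\mathcal{M}$, establishing $\mathcal{M} \models \forall x\varphi(x)$ and therefore $\mathcal{M} \models I_x\varphi$. The only genuinely delicate point is the one already isolated in Lemma~\ref{lem:34}, namely that a $p$-free atom stabilizes far out on the negative part of the $\mathbb{Z}$-chain so as to mirror its behaviour on $\mathbb{N}$; everything else is a routine verification.
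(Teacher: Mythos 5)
Your proposal is correct and follows essentially the same route as the paper's proof: split the domain into the standard chain $(0,n)$ handled by external induction, and the $\mathbb{Z}$-chain $(1,m)$ handled by applying Lemma~\ref{lem:34} atomwise to find a foothold far down the negative part, then propagating upward with the assumed induction step. The only (harmless) difference is that you make explicit, via Lemma~\ref{lem:29}, the correspondence $\mathcal{M} \models \varphi((0,n)) \Leftrightarrow \mathbb{N} \models \varphi(n)$ that the paper's proof uses implicitly when it invokes Lemma~\ref{lem:34}.
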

\begin{proof}
  Let \(\varphi(x)\) be a quantifier-free \(p\)-free formula and assume that \(\mathcal{M} \models \varphi(0)\) and \(\mathcal{M} \models \varphi(x) \rightarrow \varphi(s(x))\).
  Let \((b, n) \in |\mathcal{M}|\).
  If \(b = 0\), then the claim follows by a straightforward induction and the definition of the model \(\mathcal{M}\).
  If \(b = 1\), then we consider the atoms of the formula \(\varphi(x)\).
  By applying Lemma~\ref{lem:34} to the atoms of \(\varphi\) we obtain a natural number \(M\) such that for all \(m \geq M\) we have \(\mathcal{M} \models \varphi((1,-m)) \Leftrightarrow \mathcal{M} \models \varphi((0,m))\).
  Clearly, there exists a natural number \(n'\) with \(n' \leq n\) and \(n' \leq -M\).
  Then we have \(\mathcal{M} \models \varphi((1, n'))\) because we have already shown that \(\mathcal{M} \models \varphi((0,-n'))\).
  By applying repeatedly applying the induction step we then obtain \(\mathcal{M} \models \varphi((1,n' + k))\) for all \(k \in \mathbb{N}\).
  In particular we obtain \(\mathcal{M} \models \varphi((1, n))\).
\end{proof}
We can now finally give a proof of Lemma~\ref{lem:13}
\begin{proof}[Proof of Lemma~\ref{lem:13}]

  By Lemma~\ref{lem:30} and Lemma~\ref{lem:32} we have \(\mathcal{M} \models \mathcal{T} + \eqref{B:1} + \INDParameterFree{\Open(L')}\).
  Now observe that \((1,0) +^{\mathcal{M}}(1,0) = (1,0)\) but \((1,0) \neq (0,0) = 0^{\mathcal{M}}\).
  Hence \(\mathcal{T} + \eqref{B:1} + \INDParameterFree{\Open(L')} \not \vdash \theta(x,x)\).
  Hence by Proposition~\ref{pro:8} we obtain \(\mathcal{T} + \INDParameterFree{\Open(L(\mathcal{T}))} \not \vdash \theta(x,x)\).
\end{proof}
\begin{acronym}
  \acro{AITP}{automated inductive theorem proving}
  \acro{ATP}{automated theorem proving}
\end{acronym}

\end{document}